\numberwithin{equation}{section}
\newtheorem{theorem}{Theorem}[section]
\newtheorem{lemma}[theorem]{Lemma}
\newtheorem{proposition}[theorem]{Proposition}
\newtheorem{corollary}[theorem]{Corollary}
\newtheorem{remark}[theorem]{Remark}
\newcommand{\ffi}{\varphi}
\newcommand{\ud}{\,\mathrm{d}}
\renewcommand\tilde{\widetilde}
\newcommand\e{\varepsilon}
\newcommand{\cu}{{c}}
\newcommand{\iniz}{{u_{0}}}
\newcommand{\ciniz}{{}}
\newcommand{\diniz}{{\delta_0}}
\newcommand{\newatop}{\genfrac{}{}{0pt}{1}}
\def\bdelta{\overline{\delta}}
\def\R{\mathbb{R}}
\newcommand\ep{\varepsilon}
\newcommand{\T}{\mathcal T}
\newcommand{\Str}{\Pi}
\newcommand{\yd}{y_d}
\newcommand{\etad}{{\eta_d}}
\newcommand{\xid}{\xi_d}
\newcommand{\xd}{x_d}
\newcommand\ges{\gtrsim}
\newcommand{\E}{\mathcal{E}}
\def\cC{\mathcal{C}}
\newcommand{\si}{l}
\newcommand{\finiz}{{f^0}}
\newcommand{\finizinf}{{\|\finiz\|_{L^\infty(S)}}}
\newcommand{\dfin}{{\bar{\delta}}}
\def\E{\mathcal{E}}
\def\Z{\mathbb{Z}}
\def\N{\mathbb{N}}
\def\K{\mathcal{K}}
\renewcommand{\phi}{\varphi}
\def\Div{\textup{div}\,}
\def\1{\mathbf{1}}
\def\loc{\mathrm{loc}}
\def\XXint#1#2#3{{\setbox0=\hbox{$#1{#2#3}{\int}$ }
\vcenter{\hbox{$#2#3$ }}\kern-.57\wd0}}
\renewcommand{\subset}{\subseteq}
\def\lt{\left}
\def\rt{\right}
\def\les{\lesssim}
\def\ges{\gtrsim}
\DeclareMathOperator*{\argmin}{\arg\!\min}
\def\weaklim{\rightharpoonup}
\def\E{\mathcal{E}}
\def\G{\mathcal{G}}
\title[Multi-dimensional Cahn-Hilliard relaxation rates] {A gradient flow approach to relaxation rates for the multi-dimensional Cahn-Hilliard equation}
\author[L. De Luca]
{L. De Luca}
\address[Lucia De Luca]{Via Bonomea 265, 34136 Trieste, Italy}
\email[L. De Luca]{ldeluca@sissa.it}
\author[M. Goldman]
{M. Goldman}
\address[Michael Goldman]{
Universit\'e Paris-Diderot, Sorbonne Paris-Cit\'e, Sorbonne Universit\'e,  CNRS,  Laboratoire Jacques-Louis Lions, LJLL, F-75013 Paris}
\email[M. Goldman]{goldman@math.univ-parisdiderot.fr}
\author[M. Strani]
{M. Strani}
\address[Marta Strani]{Dipartimento di Scienze di Base e Applicate per l'Ingegneria, Sapienza Universit\`a di Roma, Via Antonio Scarpa 10, Roma, Italy.
}
\email[M. Strani]{marta.strani@sbai.uniroma1.it}
\begin{document}

\begin{abstract}
The aim of this paper is to study relaxation rates for the Cahn-Hilliard equation in dimension larger than one. We follow the approach of Otto and Westdickenberg based on the gradient flow structure of the equation and establish differential and algebraic relationships between the energy,
the dissipation, and the squared $\dot H^{-1}$ distance to a kink. This leads to a scale  separation of the dynamics into two different stages: a first {\it fast} phase of the order $t^{-\frac 1 2}$ where one sees convergence to some kink, followed by a {\it slow} relaxation phase with rate $t^{-\frac 1 4}$ where convergence to the centered kink is observed.

%
%


\vskip5pt
\noindent
\textsc{Keywords.} gradient flow, relaxation to equilibrium, stability
\vskip5pt
\noindent
\textsc{AMS subject classifications.} 35K35, 35K55, 35B40

\end{abstract}
\maketitle
\section{Introduction}

In this paper we consider the Cahn-Hilliard equation on the strip $S:=Q\times \R\subset \R^d$ with $1\le d\le 5$
\begin{equation}\label{CH}
\left\{\begin{array}{ll}
 u_t-\Delta (G'(u)-\Delta u)=0 \quad & \   x \in S, \ t>0 \\
  u(0)=u_0,
\end{array}\right.
\end{equation}
where $Q$ is the $(d-1)$-dimensional torus defined by $Q:=\R^{d-1}/ \Z^{d-1}$ and $G$ is the standard double well potential $G(u):=\frac{(1-u^2)^2}{4}$. 

As first observed in \cite{Fi},  \eqref{CH}
can be seen as the gradient flow of the energy functional
\[
 E(u):=\int_{S} \frac{1}{2} |\nabla u|^2+G(u)\ud x
\]
with respect to the $\dot H^{-1}$ norm. Motivated for instance by the theory of phase transition, a very important class of stationary solutions to \eqref{CH} is given by the so-called {\it kink states}  or planar transition fronts defined as the one-dimensional functions
\begin{equation}\label{kinks}
\K:=\{v_c(\cdot)=v_{{0}}(\cdot-c)\,:\,c\in\R\}\,,
\end{equation}
where $v_0(z):=\tanh\lt(\frac {z}{\sqrt{ 2}}\rt)$ is the solution of
\begin{equation}\label{ELv}
 - v_{0zz}+G'(v_0)=0 \qquad v_0\to \pm 1 \textrm{ as } z\to \pm \infty, \quad v_0(0)=0\,.
\end{equation}
These are minimizers of the energy $E(u)$ under the condition that $u(x',z)\to \pm 1$ as $z\to \pm \infty$. Indeed, for any $w\in C^\infty(\R)$  with  $w(z)\to \pm 1$
as $z\to\pm\infty$, using the Modica-Mortola trick, we have 
$$
E_z(w):=\int_{\R}\frac 1 2|w_z|^2+G(w) \ud z\ge E_z(v)=\int_{-1}^1\sqrt{2\,G(s)}\ud s =:m_0\,,
$$
so that the functions in $\K$ are the only minimizers of $E_z$ among the functions with  $\pm 1$  boundary conditions at $\pm\infty$.
Moreover, for any function $u\in C^\infty(S)$ with $u(x',z)\to \pm 1$ as $z\to\pm\infty$ and for any $v\in\K$,  
\begin{multline*}
E(u)=\int_{Q}\int_{\R}  \frac{1}{2} |\nabla u|^2+G(u)\ud x'\ud z\ge \int_{Q}\int_{\R}\frac{1}{2} |\partial_z u|^2+G(u)\ud x'\ud z\ge \int_{Q} E_{z}(v)\ud x'=m_0\,,
\end{multline*}
so that the kink states are also minimizers of $E$\,.
The aim of this paper is to prove  asymptotic stability of these states together with optimal convergence rates. We  thus extend to higher dimensions previously known results in the $1d$ case (see \cite{OW} and references therein). \\
In our analysis we assume that the initial condition $u_0$ satisfies
\begin{equation}\label{masscons}
\int_S(u_0-v_0)\ud x=0\,.
\end{equation}
Our main result states that  solutions of \eqref{CH} with initial data  which are close enough to a kink state converge to $v_0$ at   two different time-scales: a {\it faster} time-scale on which the solution $u$ to \eqref{CH} converges to the $L^2$ closest kink $v_{c(t)}$ defined by 
\begin{equation}\label{minimaleint}
v_{c(t)}\in\argmin\{\|u(t)-v\|_{L^2}\,:\,v\in\K\}\,, 
\end{equation}
and a {\it slower} time-scale on which $v_{c(t)}$ converges to $v_0$.
Roughly speaking, in the first step the convergence is driven by the energy decay while in the latter the mass conservation plays a role through the assumption \eqref{masscons}\,.

Before precisely stating  the main theorem we introduce the three relevant  quantities, the relations among which will allow us to prove the desired decay rates.
These quantities are the energy gap, the dissipation and the squared distance defined respectively by
\begin{eqnarray}\label{gap}
\E(u)&:=&E(u)-m_0\\ \label{dissip}
 D(u)&:=&\int_{S} |\nabla (\Delta u-G'(u))|^2\ud x\\  \label{distance}
H(u)&:=&\inf_{\newatop{F\in L^2(S)}{\Div F= u-v_0}}\int_{S} |F|^2\ud x=\|u-v_0\|_{\dot{H}^{-1}}^2\,,
\end{eqnarray} 
where we have adopted 
 the usual convention that if the set $\{F\in L^2(S)\,:\,\Div F=u-v_0\}$ is empty, then $H(u)=+\infty$.
To shorten the notation, we set $\E_0:=\E(u_0)$, $D_0:=D(u_0)$ and $H_0:=H(u_0)$\,. Notice that if $H_0<\infty$ then necessarily \eqref{masscons} holds.

We are now in a position to state our main result.

 \begin{theorem}\label{teointro0}
Let $1\le d\le 5$. For every  $\overline{H}, \, \overline{\E}>0$, there exists $\diniz>0$ such that for every $u_0 \in L^\infty(S)$ with
$H_0 \le \overline{H}$, $\E_0 \leq \overline{\E}$ and $\|u_0 - {v}\|_{L^\infty} \leq \diniz$,
 there exists a unique solution $u=u(t,x)\in C^\infty((0,+\infty)\times S)$   of \eqref{CH}.\\
Moreover,  $u(t) \in L^\infty(S)\cap ({v_{c(t)}}+H^1(S))$ and
\begin{equation}\label{stimeteogenerico}
\begin{aligned}
\| u(t)-v_{c(t)} \|_{H^1} \lesssim  \mathcal{G}_0^{\frac{1}{2}} t^{-\frac{1}{2}}\,, \\
\|u(t)-v_{0}\|_{H^1}  \les \mathcal{G}_0^{\frac{1}{2}} t^{-\frac{1}{4}}\,,
\end{aligned}
\end{equation}
where $\mathcal{G}_0:= H_0+\E_0+\E^7_0$\,.
\end{theorem}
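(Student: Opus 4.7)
The plan is to follow the gradient flow / entropy--entropy production strategy of Otto and Westdickenberg referenced in the introduction, adapting it from dimension one to the multi-dimensional strip $S=Q\times\R$. The three functionals $\E,D,H$ defined in \eqref{gap}--\eqref{distance} are central: the argument reduces to establishing (a) the dissipation identity for $\E$, (b) uniform-in-time control of $H$ along the flow, (c) the algebraic interpolation $\E^2\lesssim DH$ near the kink manifold $\K$, and (d) local $H^1$-coercivity of $E$ near $\K$. Items (a)--(d) yield the fast rate, and a mass-conservation argument upgrades this to the slower rate on $\|u(t)-v_0\|_{H^1}$.

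Short-time existence and smoothness follow from standard parabolic theory for Cahn--Hilliard together with the $L^\infty$-proximity hypothesis $\|u_0-v\|_{L^\infty}\le\diniz$, and extension to $(0,\infty)$ will follow once the a priori estimates are in place. Writing $\mu:=G'(u)-\Delta u$ and exploiting the gradient flow form $u_t=\Delta\mu$, one obtains $(a)$ $\tfrac{d}{dt}\E(u(t))=-D(u(t))$, and the $\dot H^{-1}$-pairing of $u-v_0$ with $u_t$ (meaningful thanks to \eqref{masscons}) gives $\tfrac12 \tfrac{d}{dt}H(u(t))=-\int_S(u-v_0)\,\mu\,dx$. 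Substituting $\mu=G'(u)-\Delta u$, using the kink equation $-\Delta v_0+G'(v_0)=0$, a Taylor expansion of $G'$ around $v_0$, and the closest-kink orthogonality to cope with the translation kernel of the linearization, one should bound the right-hand side and arrive at $H(t)\lesssim \mathcal G_0$ along the flow.

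The algebraic heart of the argument is the interpolation $\E(u)^2\lesssim D(u)\,H(u)$ for $u$ in an $L^\infty$-neighborhood of some kink. The natural approach is to decompose $w:=u-v_{c(t)}$ into its $Q$-average $\bar w(z)$ and the transverse fluctuation $w-\bar w$. On $\bar w$ the one-dimensional Otto--Westdickenberg argument applies essentially verbatim, exploiting the orthogonality $\int w\,v_{c(t)}'\,dx=0$ provided by definition \eqref{minimaleint} to absorb the kernel of the linearized operator around $v_{c(t)}$. For the transverse fluctuation a spectral-gap argument on $Q$ (Poincar\'e--Wirtinger in the $(d-1)$ compact directions combined with the Schr\"odinger-type coercivity of the 1D kink operator) supplies a strict lower bound by $D$ alone; the dimension restriction $d\le 5$ enters through the Sobolev embeddings needed to absorb the cubic remainder from $G'$. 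Combining (a), the bound on $H$, and the interpolation yields $\dot\E\lesssim -\E^2/\mathcal G_0$; a Gronwall argument then gives $\E(t)\lesssim \mathcal G_0/t$, and local coercivity of the second variation of $E$ around $\K$ (inherited from $L^\infty$-proximity propagated by parabolic smoothing) converts this into $\|u(t)-v_{c(t)}\|_{H^1}^2\lesssim \E(u(t))$, i.e. the fast rate.

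For the slow rate it remains to bound $|c(t)|$, since $\|v_{c(t)}-v_0\|_{H^1}\lesssim|c(t)|$. Mass conservation gives $\int_S(u(t)-v_0)=0$, while direct integration yields $\int_S(v_0-v_{c(t)})=2c(t)|Q|$, so $\int_S(u(t)-v_{c(t)})=-2c(t)|Q|$. Testing this identity against a suitable $\dot H^1$ function (built from $v_0'$, which decays exponentially and hence lies in $\dot H^1$) and using Cauchy--Schwarz in the $\dot H^{-1}$/$\dot H^1$ duality together with the bounds $H(t)\lesssim \mathcal G_0$ and $\|u(t)-v_{c(t)}\|_{H^1}^2\lesssim 1/t$, one should derive an interpolation inequality of the form $|c(t)|^4\lesssim H(t)\,\E(t)$, yielding $|c(t)|\lesssim \mathcal G_0^{1/2}t^{-1/4}$, and the triangle inequality delivers the second bound in \eqref{stimeteogenerico}. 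The main obstacle in the whole argument is the interpolation $\E^2\lesssim DH$: the transverse--tangential decomposition must be handled uniformly in $u$ and the nonlinear remainders absorbed, which is the origin of the dimension restriction; a secondary difficulty is the sharp $t^{-1/4}$ rate for $|c(t)|$, which hinges on a delicate interplay between the $H$-bound, the energy decay, and mass conservation.
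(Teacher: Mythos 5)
Your high-level plan matches the paper's: establish a dissipation identity for $\E$, algebraic and differential relations among $\E,D,H,c$, then an ODE argument, with the nonlinear energy-gap estimate converting $\E$-decay into $H^1$-decay. Two genuinely different choices and several gaps are worth flagging.

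First, the difference. For the linear coercivity estimates (nonnegativity and gap for $E_\ell$), the paper's central new tool is the Lassoued--Mironescu substitution $f=v_{cz}g$, which gives $E_\ell(f)=\int_S v_{cz}^2|\nabla g|^2$ (Lemma~\ref{pos}) and hence the rigidity Lemma~\ref{LemLM}. You instead propose splitting $f$ into its $Q$-average $\bar f(z)$ and the fluctuation $f-\bar f$, running the one-dimensional OW argument on $\bar f$ and Poincar\'e on $Q$ for the fluctuation. Since $\nabla\bar f$ is purely longitudinal and $\int_Q (f-\bar f)\,\ud x'=0$, the decomposition is $E_\ell$-orthogonal ($E_\ell(f)=E_\ell(\bar f)+E_\ell(f-\bar f)$), and on the fluctuation the spectral gap $(2\pi)^2>\|G''\|_\infty$ in the transverse variable immediately gives coercivity. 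This is viable and arguably more elementary; in fact the paper uses exactly this averaging device in the rigidity Lemma~\ref{lemrig}, while using Lassoued--Mironescu for positivity (Lemma~\ref{pos}). Either route buys the linear estimates; the LM route is cleaner for passing from positivity to rigidity.

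Second, the gaps.
(i) You misidentify the source of the restriction $d\le 5$. It does not come from absorbing a cubic remainder; it comes from the need to control $\|f_c\|_{L^\infty}$ by $\E$ and $D$, via the Gagliardo--Nirenberg inequality $\|f_c\|_{L^\infty}\les\|f_c\|_{L^2}^{1-d/6}\|\nabla^3 f_c\|_{L^2}^{d/6}$, which requires $d<6$ (Corollary~\ref{cor:gaglia}). This $L^\infty$ control is what lets one \emph{propagate} the $L^\infty$-smallness assumption forward in time and is essential both for the Taylor expansions and for the global existence argument.
(ii) Your algebraic relation ``$\E^2\les DH$'' is missing the term $(|c|+1)^2 D$; the correct relation is $\E\les(HD)^{1/2}+(|c|+1)^2 D$ (and likewise $c^2\les(H\E)^{1/2}+(|c|+1)\E$, not $|c|^4\les H\E$). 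When $D$ dominates, the extra term matters, and the ODE argument (Lemma~\ref{lem:ode}) has to be carried out with these full relations.
(iii) The bound $H(t)\les\G_0$ is \emph{not} obtained by a one-shot estimate of $\ud H/\ud t$; it is the outcome of the coupled ODE analysis using the differential relation $\ud H/\ud t\les((|c|+1)c^2 D)^{1/2}+\E^{3/2-d'/12}D^{d'/12}$, the change of variables $t\leftrightarrow\E$, and the algebraic relations. A direct attempt to ``bound the right-hand side'' does not close.
(iv) Global existence of smooth solutions requires a real argument; the paper's Section~\ref{section:globalex} proves local existence via Duhamel and a fixed point in a weighted H\"older class (Theorem~\ref{Tex0}), then iterates, using the decay estimates and $L^\infty$-control of Corollary~\ref{cor:gaglia} to show the smallness hypothesis persists past each time step. ``Parabolic smoothing'' alone does not establish that $\|u(t)-v_{c(t)}\|_{L^\infty}$ stays below $\bdelta$ uniformly in time.
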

Let us observe that the discussion in \cite{OW} indicates that the  relaxation rates obtained in \eqref{stimeteogenerico} are optimal.
The proof closely follows the strategy laid in the case  $d=1$  by Otto and Westdickenberg in \cite{OW}.  It consists of  a non-linear energy-based method  which builds on the gradient flow structure of \eqref{CH} 
and combines algebraic and differential relationships between
$\E$, $H$ and $D$\, together with an ODE argument.  This scheme has been successfully implemented to obtain convergence rates to equilibrium in various related problems \cite{E,COW}\, and has recently proven pivotal to study coarsening rates for the Cahn-Hilliard equation (see \cite{SW}). However, since all these results deal with one dimensional
systems and since some of the central arguments used in \cite{OW} are one dimensional by nature, it was unclear if their strategy could be extended to higher dimensional problems. The main difference with the proof of \cite{OW} lies in the linearized energy gap and dissipation estimates where we replace the arguments of \cite{OW} by the use of the Lassoued-Mironescu trick \cite{LM}. 
Another main difference is that one of the most technical point in the analysis of \cite{OW} is to prove that the energy gap controls the $L^\infty$ distance of the solution to the kink. This allows the authors to make a Taylor expansion of the energy around the kink which is at the basis of most of the arguments. Unfortunately, in higher dimension this cannot be the case since having finite energy does not even guarantee to be bounded. 
For this reason, we need to assume that the initial data is close in $L^\infty$ to some kink and must prove that this property is preserved during the evolution. We thus obtain a perturbative result as opposed to the one in \cite{OW} where the only requirement besides $H_0<\infty$ is that $\E_0< 2 m_0$. Let us point out that since we assume 
somewhat {\it a priori} $L^\infty-$closeness to a kink, we can relax the requirement on the energy gap and merely assume that it is finite. The restriction on the dimension comes from the fact that as already underlined, 
we need to be able to prove that the $L^\infty$ distance to a kink remains small under the evolution which by the non-linear dissipation estimate  and the Gagliardo-Nirenberg inequality is possible if $d\le 5$ (see \eqref{nonlineardissip} and \eqref{gagliahd}).       
Even though the literature on the Cahn-Hilliard is very vast (see for instance \cite{Temam,ellsengmu,LWZ}), we were not able to find an existence result of classical solutions suiting our needs. Therefore, although it might be well-known to the experts, we decided to include a global existence result (see Theorem \ref{globalex}) for \eqref{CH}. The proof is inspired by \cite{LWZ} and is based on a Banach 
fixed point theorem in weighted H\"older spaces.\\

We now briefly recall the (few) results in the literature about the stability of planar wave solutions of related models  in higher dimensional domains. We refer again to \cite{OW} for a more detailed discussion. 
In the whole space case $\R^d$ with $d\ge 3$, Korvola, Kupiainen, and Taskinen \cite{KKT} established the asymptotics of the solution under the assumption that the initial datum is close to a kink state in a weighted $L^\infty$ norm.
In such a case, the translation of the front tends to zero as the time tends to infinity but the perturbation does not decay in the standard diffusive $t^{\frac 1 2}$ fashion  (as it is the case for us) but with a 
 $t^\frac{1}{3}$ scaling.
The method used by the authors is very different from our variational approach and is based on  a careful analysis of the semi-group generated by the linearization. 
A similar method has been used by Howard in \cite{H7b} to extend this stability result to $\R^2$.
The analysis by Howard makes use of pointwise estimates on the Green's function for the linearized operator in order to locate the shifts of the planar wave, through the local tracking method. 
This analysis has been extended by the same author to the non-linear case \cite{H7c,H7d} as well as to systems \cite{H7e}.\\
Let us mention that a similar (in spirit) result to ours has been obtained by Carlen and Orlandi in \cite{CO}. There, the authors study the stability of planar fronts for a non-local equation on the strip $S\subset\R^d$, where $d=2,3$\,. 
They prove that if the initial datum is $L^2$-close to a front and localized, then the solution  relaxes to another front in the $L^1$-norm. They also provide relaxation rates for the $L^2$-norm as well as rate of decrease for the free energy.
However, as already pointed out in \cite{OW} the method of \cite{CO} cannot give optimal rates.

\vskip0.2cm
The paper is organized as follows: 
\begin{itemize}
\item In Section \ref{section:prelim} we  prove some preliminary results which are used throughout the  paper.
\item Section \ref{section:endissest} is devoted to  the energy gap $\E$ and dissipation $D$ estimates.
\item In Section \ref{section:relaxest} we prove the main relaxation result Proposition \ref{prop:decay}. It is obtained by combining algebraic and differential relations between $\E$, $H$, $D$ and $c$ together with an ODE argument. 
\item Finally, in Section \ref{section:globalex}, we prove the first part of Theorem \ref{teointro0}, namely the global existence and uniqueness of smooth solutions to \eqref{CH}. 
We first obtain a local existence and uniqueness result (Theorem \ref{Tex0}) and then, using the estimates obtained in Section \ref{section:endissest}  and in Section \ref{section:relaxest}, we show that the solution  can be extended for any positive time. 
\end{itemize}

\vskip1mm
\noindent

{\bf Acknowledgments:}  
We warmly thank J.F. Babadjian, F. Cacciafesta and G. De Philippis  for very useful discussions related to the local existence result Theorem \ref{Tex0}. 
The hospitality of the Universit\'e Paris-Diderot and the Universit\`a di Roma  ``Sapienza'' where part of this research was done are gratefully acknowledged.
In the early stage of this work, LDL was funded by the DFG Collaborative Research Center CRC 109 ``Discretization in Geometry and Dynamics''. Part of this work was carried out while LDL was visiting Universit\`a di Roma ``Sapienza'' and  Universit\`a di Milano ``Bicocca'', thanks to the program ``Global Challenges for Women in Math Science''. MG was partially supported by the PGMO project COCA. MS  was supported in the early stage of this work  by the INdAM Fellowiship in Mathematics for Experienced Researchers cofounded by Marie Curie actions.

\vskip1cm
\textbf{Notation}
Let $Q:= \R^{d-1}/ \Z^{d-1}$ be the $(d-1)-$dimensional torus. We will always work in  the strip $S:=Q\times \R$. For any $x=(x_1,\ldots,x_{d-1},x_d)$, we use the notation  $x=(x',z)$ where $x':=(x_1,\ldots,x_{d-1})$ and $z:=x_d$. 
We denote by $\nabla'$ the gradient operator with respect to the variable $x'=(x_1,\ldots,x_{d-1})$ and set  
 $\nabla:=(\nabla',\partial_{z})$. Moreover, for the distributional derivatives of a function $w=w(t,x)$,
we will sometimes use the shorthand notation $w_{x_l}:=\partial_{x_l} w$  (for any $l=1,\ldots,d-1$) as well as  $w_z:=\partial_z w$ and $w_t:=\partial_t w$. Also, with a little abuse of notation and whenever the context is not ambiguous, $w(t)$ will denote the function $w(t,x)$ seen a function depending only on the space variable $x$. 
Finally, we write $A\les B$ if there exists a universal constant $C<\infty$ such that $A\le C\,B.$
We define $\ges$ analogously and say $A\sim B$ if $B\les A\les B$. We will also occasionally use the notation $\ll$ or $\gg$. For instance, $A(x)\ll B$ for $x\gg 1$ means that for every $\delta>0$ there exists $M<\infty$ such that $x\ge M$ implies $A(x)\le\delta\, B$. \\
As mentioned in the introduction, for any function $u\in L^2(S)$, we define the shifted kink $v_c(z):=v_0(\cdot-z)$ as an $L^2$ projection of $u$ onto the set $\K$ defined in \eqref{kinks}, i.e.,
\begin{equation}\label{minimale}
v_{c}\in\argmin\{\|u-v\|_{L^2}\,:\,v\in\K\}
\end{equation}
and we set $f_c:=u-v_c$\,.
As a consequence, $v_{\cu}$ satisfies the Euler-Lagrange equation
\begin{equation}\label{ELvc}
 \int_S f_\cu v_{\cu z}=\int_{S} (u-v_\cu)v_{\cu z} \ud x =0\,.
\end{equation}
Finally, for $u=u(t)$ we set $v_c(t):=v_\cu$\,.

\section{Preliminaries}\label{section:prelim}
In this section we gather a few simple observations and technical results which will be used later on in the paper. 

We first point out that, as in \cite[Remark 3]{OW}, if $\E(u)<\infty$ and $H(u)<\infty$, then $f_0\in H^1(S)$ and $u$ satisfies the right boundary conditions at infinity.
\begin{lemma}\label{lemH1}
 Let $u$ be such that $\E(u)<\infty$ and $H(u)<\infty$, then $f_0:=u-v_0\in H^{1}(S)$ and for a.e. $x'\in Q$, $u(x',z)\to \pm 1$ as $z\to \pm \infty$.
\end{lemma}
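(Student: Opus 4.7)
The plan is to address the two assertions in turn. For the $H^1$ statement I would first bound $\nabla f_0$ in $L^2$ using $\E(u)<\infty$, and then bound $f_0$ itself via a cutoff/interpolation argument fuelled by $H(u)<\infty$.

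The gradient bound is immediate: $\E(u)<\infty$ gives $\nabla u\in L^2(S)$ by definition of $E$, while $v_0$ depends only on $z$ with $v_0'(z)=\frac{1}{\sqrt 2}\mathrm{sech}^2(z/\sqrt 2)$ exponentially decaying, so $\nabla v_0\in L^2(S)$ (using that $Q$ is a compact torus). Hence $\nabla f_0\in L^2(S)$.

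The $L^2$-control of $f_0$ is the main point. From $G(u)\in L^1(S)$ together with the elementary inequality $u^2\le 4(1+(1-u^2)^2)$ we deduce $u\in L^2_{\loc}(S)$ and hence $f_0\in L^2_{\loc}(S)$. The assumption $H(u)<\infty$ supplies $F\in L^2(S;\R^d)$ with $\Div F=f_0$ in $\D'(S)$. Fix $\chi_R\in C^\infty_c(\R)$, depending only on $z$, with $\chi_R\equiv 1$ on $[-R,R]$, supported in $[-R-1,R+1]$, and $|\chi_R'|\le 2$; then $\chi_R^2 f_0$ is compactly supported and lies in $H^1(S)$, so testing $\Div F=f_0$ against it and applying the product rule gives
\begin{equation*}
\int_S \chi_R^2 f_0^2\ud x=-\int_S \chi_R^2\, F\cdot\nabla f_0\ud x-2\int_S \chi_R f_0\, F\cdot\nabla\chi_R\ud x.
\end{equation*}
Cauchy-Schwarz bounds the first right-hand term by $\|F\|_{L^2}\|\nabla f_0\|_{L^2}$; the second is dominated by $2\|\chi_R f_0\|_{L^2}\,\|F\chi_R'\|_{L^2}$, where the latter factor involves only $F$ on the slab $\{R\le|z|\le R+1\}$ and therefore tends to $0$ as $R\to\infty$. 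Young's inequality then absorbs the offending factor $\|\chi_R f_0\|_{L^2}$ into the left-hand side, yielding a bound on $\|\chi_R f_0\|_{L^2}$ that is uniform in $R$; monotone convergence gives $f_0\in L^2(S)$, and hence $f_0\in H^1(S)$.

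Once $f_0\in H^1(Q\times\R)$ is established, the boundary behaviour follows from Fubini and the one-dimensional embedding $H^1(\R)\hookrightarrow C_0(\R)$: for a.e. $x'\in Q$ the slice $f_0(x',\cdot)$ lies in $H^1(\R)$ and hence vanishes at $\pm\infty$, so $u(x',z)=v_0(z)+f_0(x',z)\to\pm 1$ as $z\to\pm\infty$. The only mildly delicate point is the integration by parts in the middle step, but the compactly supported cutoff $\chi_R^2 f_0$ handles it cleanly.
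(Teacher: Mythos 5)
Your argument is correct and reaches the same two conclusions by a closely related route, but it differs from the paper in two respects worth noting. For the $L^2$-bound on $f_0$, the paper simply invokes the interpolation inequality $\|f_0\|_{L^2}^2\lesssim\|f_0\|_{\dot H^{-1}}\|\nabla f_0\|_{L^2}$; you instead derive it by hand, observing first that $f_0\in L^2_\loc$ from the energy bound, then testing $\Div F=f_0$ against the cutoff $\chi_R^2 f_0$ and absorbing the boundary term by Young's inequality before letting $R\to\infty$. This is more pedestrian but also more self-contained: the interpolation inequality, stated for a function not yet known to be in $L^2$, deserves exactly the kind of justification you supply, and your computation recovers the quantitative bound $\|f_0\|_{L^2}^2\le 2\|F\|_{L^2}\|\nabla f_0\|_{L^2}$ in the limit $R\to\infty$. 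For the boundary behaviour, the paper integrates the Agmon-type slice estimate over $Q$ and concludes by $L^1$-convergence plus monotonicity of $M\mapsto\sup_{|z|\ge M}f_0^2$; you appeal directly to Fubini and the one-dimensional embedding $H^1(\R)\hookrightarrow C_0(\R)$ applied to the a.e.\ slice $f_0(x',\cdot)$. Both arguments yield the a.e.\ pointwise limit; yours is shorter and makes the underlying mechanism (one-dimensional decay of $H^1$-functions on lines) somewhat more transparent.
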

\begin{proof}
 If $u$ is such that  $\E(u)<\infty$ and $H(u)<\infty$, then
 \begin{equation*}
\begin{aligned}
\| \nabla f_0\|^2_{L^2}&=\int_{S} |\nabla f_0|^2\ud x\les \int_{S} |\nabla u|^2\ud x+\int_{S} |\nabla v_0|^2 \ud x\\
&\le E(u)+E(v_0)
\end{aligned}
\end{equation*}
and then, by interpolation,
\[
 \| f_0\|^2_{L^2}=\int_{S} f_0^2  \ud x\les \|f_0\|_{\dot H^{-1}} \|\nabla f_0\|_{L^2}\les H(u)(E(u)+E(v_0))^{\frac 1 2}\,,
\]
so that  $f_0$ is bounded in  $H^1$. As a consequence,
\[
\int_{Q} \sup_{z\ge M} f_0^2\ud x'\le \int_{Q}\ud x'\int_M^\infty (f_0^2+|\nabla f_0|^2)\ud z\to 0 \quad \textrm{ as } M\to \infty\,.
\]

Therefore $\sup_{|z|\ge M} f_0^2(\cdot,z)$ converges to zero in  $L^1(Q)$, and, since it is monotonically decreasing, it converges also a.e. to zero.
\end{proof}

Our second observation is that, as in the proof of \cite[Lemma 1.3]{OW}, closeness in $L^\infty$ to some kink implies closeness to $v_c$.

\begin{lemma}\label{lemma:fc}
 If $v_c$ is the minimizer of \eqref{minimale} for $u$, then for every $\bar c\in \R$,
 \begin{equation}\label{estimvinfty}
  \|f_c\|_{L^\infty}\les\|f_{\bar c}\|_{L^\infty}. 
 \end{equation}

\end{lemma}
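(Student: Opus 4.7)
The natural route is to compare $v_c$ and $v_{\bar c}$ via the Euler--Lagrange equation \eqref{ELvc} together with the triangle inequality. Writing $v_c - v_{\bar c}$ as the additive correction, one has
\[
\|f_c\|_{L^\infty} \le \|f_{\bar c}\|_{L^\infty} + \|v_c - v_{\bar c}\|_{L^\infty},
\]
and since $v_0=\tanh(\cdot/\sqrt{2})$ is globally Lipschitz, $\|v_c - v_{\bar c}\|_{L^\infty}\lesssim |c-\bar c|$. So the whole task reduces to controlling $|c-\bar c|$ by $\|f_{\bar c}\|_{L^\infty}$.

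To extract such a bound, I would use \eqref{ELvc} in the form $\int_S f_c v_{cz}\,\mathrm dx=0$ and rewrite $f_c = f_{\bar c} - (v_c-v_{\bar c})$ to obtain
\[
\int_S f_{\bar c}\, v_{cz}\,\mathrm dx \;=\; \int_S (v_c - v_{\bar c})\, v_{cz}\,\mathrm dx.
\]
The left-hand side is crudely bounded by $\|f_{\bar c}\|_{L^\infty}\int_S v_{cz}\,\mathrm dx = 2|Q|\,\|f_{\bar c}\|_{L^\infty}$ because $v_{0z}\ge 0$ with total variation $2$ on $\R$. The right-hand side, after the change of variables $w=z-c$ and setting $\delta := c-\bar c$, factorises as $|Q|\cdot J(\delta)$ with
\[
J(\delta)\;:=\;\int_\R \bigl(v_0(w)-v_0(w+\delta)\bigr)\, v_0'(w)\,\mathrm dw.
\]
A direct computation using that $v_0(\pm\infty)=\pm 1$ shows that $J$ is smooth, odd in a suitable sense, with $J(0)=0$, $J'(0)=\|v_0'\|_{L^2(\R)}^2>0$, and $J(\delta)\to\mp 2$ as $\delta\to\pm\infty$. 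In particular $J$ is a $C^1$-diffeomorphism of $\R$ onto $(-2,2)$ with locally bi-Lipschitz inverse near the origin.

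Combining the two sides gives $|J(\delta)|\le 2\,\|f_{\bar c}\|_{L^\infty}$. Two regimes are then treated separately, and this is the only mildly delicate point. In the \emph{small-perturbation regime} $\|f_{\bar c}\|_{L^\infty}\le \eta_0$ for a suitably fixed $\eta_0<1$, the bound $|J(\delta)|\le 2\eta_0<2$ confines $\delta$ to a compact subset of $\R$ on which $J^{-1}$ is Lipschitz, yielding $|\delta|\lesssim |J(\delta)|\lesssim \|f_{\bar c}\|_{L^\infty}$, hence $\|v_c-v_{\bar c}\|_{L^\infty}\lesssim \|f_{\bar c}\|_{L^\infty}$ and the conclusion. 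In the \emph{large-perturbation regime} $\|f_{\bar c}\|_{L^\infty}\ge \eta_0$, the trivial bound $\|v_c-v_{\bar c}\|_{L^\infty}\le 2$ already gives $\|v_c-v_{\bar c}\|_{L^\infty}\le (2/\eta_0)\|f_{\bar c}\|_{L^\infty}$. The main obstacle is really bookkeeping: ensuring that the universal constant in $\lesssim$ does not blow up as one patches these two regimes together, which is handled by fixing $\eta_0$ (depending only on $v_0$) once and for all before choosing the implicit constants.
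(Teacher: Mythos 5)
Your proof follows essentially the same route as the paper: triangle inequality, the Euler--Lagrange identity \eqref{ELvc} to convert $\int_S (u-v_{\bar c})v_{cz}$ into $\int_\R (v_c-v_{\bar c})v_{cz}$, and a one-variable analysis of $J(\delta)$ to bound $\min(|c-\bar c|,1)$ by $\|f_{\bar c}\|_{L^\infty}$, which the paper asserts without spelling out. The only slip is a sign, namely $J'(0)=-\|v_0'\|_{L^2(\R)}^2<0$ so that $J$ is strictly \emph{decreasing} from $2$ to $-2$; this is immaterial, as only $|J(\delta)|$ enters and the conclusion is unaffected.
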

\begin{proof}
 By triangle inequality, we have
 \[
  \|f_c\|_{L^\infty}\le \|f_{\bar c}\|_{L^\infty}+\|v_c-v_{\bar c}\|_{L^\infty}\les\|f_{\bar c}\|_{L^\infty}+\min(|c-\bar c|,1).
 \]
Now from \eqref{ELvc}, 
\[
 \min(|c-\bar c|,1)\les \lt|\int_{\R} (v_c-v_{\bar c}) v_{cz}  \ud z\rt|= \lt|\int_{S} (u-v_{\bar c}) v_{cz} \ud x\rt|\les \|f_{\bar c}\|_{L^\infty},
\]
from which \eqref{estimvinfty} follows.
\end{proof}
Combining Fourier arguments together with integration by parts, it is classically seen that the following holds. 
\begin{lemma}\label{Four}

Let $g\in L^2(S)$ with $\Delta g\in L^2(S)$; then $g\in H^2(S)$ with 
\begin{equation}\label{interpol}
\|\Delta g\|_{L^2(S)}=\|\nabla^2 g\|_{L^2(S)}  \quad \textrm{and} \quad \|\nabla g\|_{L^2(S)}^2\les \|g\|_{L^2(S)}  \, \|\nabla^2 g\|_{L^2(S)}.
\end{equation}

\end{lemma}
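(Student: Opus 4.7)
The plan is to use the mixed Fourier representation on $S=Q\times\R$, namely Fourier series in the periodic variable $x'\in Q$ and the Fourier transform in $z\in\R$. For $g\in L^2(S)$ this provides coefficients $\hat g(k,\xi)$ indexed by $(k,\xi)\in\Z^{d-1}\times\R$ with Plancherel identity
\[
 \|g\|_{L^2(S)}^2 \;=\; \sum_{k\in\Z^{d-1}}\int_{\R}|\hat g(k,\xi)|^2\ud\xi.
\]
Under this representation, each partial derivative $\partial_{x_l}$ corresponds to multiplication by $2\pi i k_l$ (for $l<d$) or $2\pi i \xi$ (for $l=d$). In particular the symbol of $-\Delta$ is $4\pi^2(|k|^2+\xi^2)$, and the symbol of each second derivative $\partial_i\partial_j$ is $-4\pi^2\,\eta_i\eta_j$ where we write $\eta:=(k,\xi)\in\R^d$.

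First I would prove the identity. By the assumption $\Delta g\in L^2(S)$ and Plancherel, the function $(k,\xi)\mapsto (|k|^2+\xi^2)\hat g(k,\xi)$ belongs to $\ell^2(\Z^{d-1};L^2(\R))$, hence so does every $\eta_i\eta_j \hat g$. This already yields $g\in H^2(S)$. Plancherel then gives
\[
 \|\nabla^2 g\|_{L^2(S)}^2=\sum_{i,j=1}^d\sum_k\int 16\pi^4\,\eta_i^2\eta_j^2\,|\hat g(k,\xi)|^2\ud\xi
 =\sum_k\int 16\pi^4\,|\eta|^4\,|\hat g|^2\ud\xi=\|\Delta g\|_{L^2(S)}^2,
\]
where the middle equality uses the algebraic fact $\sum_{i,j}\eta_i^2\eta_j^2=(|\eta|^2)^2$.

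Next I would establish the interpolation. Again via Plancherel,
\[
 \|\nabla g\|_{L^2(S)}^2 =\sum_k\int 4\pi^2(|k|^2+\xi^2)|\hat g(k,\xi)|^2\ud\xi
 =\sum_k\int |\hat g|\cdot 4\pi^2|\eta|^2|\hat g|\ud\xi.
\]
Applying the Cauchy-Schwarz inequality in the $(k,\xi)$-variable and using Plancherel once more yields
\[
 \|\nabla g\|_{L^2(S)}^2\;\le\;\Bigl(\sum_k\!\int|\hat g|^2\ud\xi\Bigr)^{\!\frac12}\Bigl(\sum_k\!\int 16\pi^4|\eta|^4|\hat g|^2\ud\xi\Bigr)^{\!\frac12}=\|g\|_{L^2(S)}\,\|\Delta g\|_{L^2(S)},
\]
which combined with the first identity gives the stated inequality.

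Since this result is entirely standard harmonic analysis, there is no serious obstacle; the only point worth double-checking is the legitimacy of using the mixed Fourier decomposition on the strip $S=Q\times\R$, for which one relies on the isomorphism $L^2(Q\times\R)\simeq \ell^2(\Z^{d-1};L^2(\R))$ through Fourier series in $x'$ and then the usual $L^2$-Fourier transform in $z$. No density/approximation step is required because every manipulation above is an identity or inequality at the Fourier level and is valid for arbitrary $L^2$ coefficients satisfying the finiteness assumptions.
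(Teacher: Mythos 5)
Your proof is correct and follows exactly the route the paper alludes to: the paper does not spell out a proof, stating only that the lemma follows from ``Fourier arguments together with integration by parts,'' and your mixed Fourier-series-in-$x'$, Fourier-transform-in-$z$ computation is a complete and rigorous realization of that. The Plancherel identity, the algebraic fact $\sum_{i,j}\eta_i^2\eta_j^2=|\eta|^4$, and the Cauchy-Schwarz step are all standard and correctly applied; the only cosmetic remark is that the $H^2$ membership is fully settled only once the interpolation paragraph controls $\nabla g$, not already at the point where you first assert it, but since that step appears immediately afterward the argument is logically complete.
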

The last ingredient is a Hardy type inequality, completely analogous to \cite[Lemma 2.1]{OW}.
\begin{lemma}\label{2.1}
 For any $f_c \in H^1(S)$ such that 
 \begin{equation}\label{Elf1}
  \int_{S} f_c v_{cz} \ud x=0,
 \end{equation}
 it holds
 \[
  \int_{S}\frac{1}{(z-c)^2+1} f_c^2\ud x  \les  \int_{S} |\nabla f_c|^2\ud x.
 \]
\end{lemma}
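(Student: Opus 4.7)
The plan is to expand in Fourier series in the torus variable $x'\in Q$, which decouples the zero mode (where the orthogonality hypothesis acts) from the non-zero modes (where an automatic Poincar\'e-type control is available). Writing $f_c(x',z)=\sum_{k\in\Z^{d-1}}\hat f_k(z)\,e^{2\pi i k\cdot x'}$, Parseval gives
\[
\int_S \frac{f_c^2}{(z-c)^2+1}\ud x=\sum_{k\in\Z^{d-1}}\int_{\R}\frac{|\hat f_k(z)|^2}{(z-c)^2+1}\ud z,\qquad \int_S|\nabla f_c|^2\ud x=\sum_{k\in\Z^{d-1}}\int_{\R}\bigl(|\hat f_k'(z)|^2+4\pi^2|k|^2|\hat f_k(z)|^2\bigr)\ud z.
\]
Since $v_{cz}$ depends only on $z$, the hypothesis \eqref{Elf1} collapses to the single scalar constraint $\int_{\R}\hat f_0\,v_{cz}\ud z=0$ on the zero mode, while the modes $\hat f_k$ with $k\neq 0$ carry no constraint.

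For each $k\in\Z^{d-1}\setminus\{0\}$ one has $|k|^2\geq 1$, so dropping the bounded weight $(1+(z-c)^2)^{-1}\leq 1$ one immediately obtains
\[
\int_{\R}\frac{|\hat f_k|^2}{(z-c)^2+1}\ud z\leq \int_{\R}|\hat f_k|^2\ud z\leq \int_{\R}|k|^2|\hat f_k|^2\ud z\les \int_{\R}\bigl(|\hat f_k'|^2+4\pi^2|k|^2|\hat f_k|^2\bigr)\ud z,
\]
and summing over $k\neq 0$ contributes a constant multiple of $\int_S|\nabla f_c|^2\ud x$. For the zero mode, I would invoke the one-dimensional Hardy-type inequality \cite[Lemma 2.1]{OW}, applied to the translate $\hat f_0(\cdot+c)\in H^1(\R)$, whose $L^2$ orthogonality to $v_{0z}$ is precisely the zero-mode constraint above; after undoing the translation this gives $\int_{\R}|\hat f_0|^2/((z-c)^2+1)\ud z\les \int_{\R}|\hat f_0'|^2\ud z\leq \int_S|\partial_z f_c|^2\ud x$. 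Summing the zero-mode and non-zero-mode contributions yields the claim.

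There is essentially no higher-dimensional obstacle in this reduction: the nonlinearity enters only through the one-dimensional statement. Its proof (which the authors describe as ``completely analogous'' to \cite[Lemma 2.1]{OW}) rests on the fact that the orthogonality to $v_{cz}$ rules out the unique translational zero mode of the linearised operator $-\partial_z^2+G''(v_c)$ acting on $L^2(\R)$, and I would reproduce it by a direct contradiction/compactness argument exploiting the decay of $(1+(z-c)^2)^{-1}$ at infinity, or equivalently by inspection of the discrete spectrum of that linearisation.
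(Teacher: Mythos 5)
Your proof is correct and is essentially the same argument as the paper's: both isolate the zero $x'$-mode (the transversal average over $Q$) --- the only place the orthogonality constraint $\int_S f_c v_{cz}=0$ enters --- and control it by the one-dimensional Hardy inequality, while the remaining transversal fluctuation is controlled by Poincar\'e on $Q$. Phrasing the decomposition via Fourier series in $x'$ instead of subtracting $\int_Q f(y',z)\,\ud y'$, and directly invoking \cite[Lemma 2.1]{OW} for the zero mode rather than reproducing its ``subtract the value at $z=0$, apply Hardy, then absorb the constant via the constraint'' steps, are cosmetic differences.
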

\begin{proof}
The proof resembles the one in \cite[Lemma 2.1]{OW} with the slight difference that one needs also to control the transversal variable $x'$.\\
Up to a translation, {we may assume that $v_c=v_0=:v$}. Moreover, set $f_c=f_0=:f$, the claim becomes
 \begin{equation}\label{claim2.1}
  \int_{S}\frac{1}{z^2+1} f^2\ud x  \les  \int_{S} |\nabla f|^2\ud x.
 \end{equation}
 By the triangle inequality and the fact that $\int_\R \frac{1}{z^2+1}\ud z<\infty$, we have
 \begin{multline*}
   \int_{S}\frac{1}{z^2+1} f^2\ud z\les  \int_{S}\frac{1}{z^2+1} \lt(f-\int_{Q} f(y', z) \ud y'\rt)^2 \ud x \\
   +\int_{S}\frac{1}{z^2+1} \lt(\int_{Q} f(y', z)\ud y'-\int_{Q} f(y', 0)\ud y'\rt)^2 \ud x+ \lt(\int_{Q} f(y',0) \ud y'\rt)^2.
 \end{multline*}
The first term on the right-hand side can be estimated thanks to Poincar\'e inequality on $Q$ as
\begin{equation}\label{poincarehardy}
  \int_{S}\frac{1}{z^2+1} \lt(f-\int_{Q} f(y', z) \ud y'\rt)^2 \ud x\les  \int_{S}\frac{1}{z^2+1} |\nabla' f|^2 \ud x\les \int_{S} |\nabla' f|^2\ud x\ .
\end{equation}
Using Jensen's inequality and Hardy inequality in $\R$, we can estimate the second term as
\begin{equation}\label{quadr1}
\begin{aligned}
\int_{S}\frac{1}{z^2+1} \lt(\int_{Q} f(y', z)- f(y', 0)\ud y'\rt)^2 \ud x&\les \int_{Q} \int_{\R} \frac{1}{z^2+1} \left(f(x', z)-f(x',0)\right)^2\ud z\ud x' \\
&{=\int_Q\int_{\R}\frac{z^2}{z^2+1}\left(\frac 1 z\int_{0}^z \partial_z f(x',\zeta)\ud \zeta\right)^2\ud z\ud x'}\\
&\lesssim \int_{S} |\partial_{z} f(x',z)|^2 \ud x\,.
\end{aligned}
\end{equation}
We are left with estimating the last term. For this we observe that  \eqref{Elf1} and the fact that $\int_{S} v_z\ud x=2$ imply 
\begin{equation*}
\int_{Q} f(y',0) \ud y'= -\frac{1}{2}\int_{S} \lt(f-\int_{Q} f(y',0) \ud y'\rt)  v_{z} \ud x\,.
\end{equation*}
Using Cauchy-Schwarz inequality together with $|v_z|\les \frac{1}{1+z^2}$, we obtain

\[
\lt(\int_{Q} f(y',0) \ud y'\rt)^2\les \int_{S}\frac{1}{1+z^2} \lt(f-\int_{Q} f(y',0) \ud y'\rt)^2, 
\]
which using the triangle inequality \eqref{poincarehardy} and \eqref{quadr1} can be estimated by $\int_{S} |\nabla f|^2\ud x\,$. This concludes the proof of \eqref{claim2.1}.
\end{proof}

\section{The energy and dissipation estimates}\label{section:endissest}
In this section we prove the desired non-linear energy gap and dissipation estimates. As in \cite{OW} these are crucial ingredients in the proof of the relaxation rates in Proposition \ref{prop:decay}. We recall that $f_c=u-v_c$ where $v_c$ is a minimizer of the problem \eqref{minimale} and hence solves \eqref{ELvc}.
\begin{proposition}\label{nonlinear}
 There exists $\bdelta>0$ such that if $f_c\in H^3(S)$ and $\|f_c\|_{L^\infty}\le \bdelta$, then
 \begin{equation}\label{nonlineargap}
  \E\sim\int_{S} f_c^2+|\nabla f_c|^2\ud x,
 \end{equation}
and 
\begin{equation}\label{nonlineardissip}
  D\sim\int_{S} |\nabla f_c|^2+|\nabla^2 f_c|^2+|\nabla^3 f_c|^2\ud x.
 \end{equation}
\end{proposition}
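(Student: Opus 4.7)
I would prove the two equivalences \eqref{nonlineargap} and \eqref{nonlineardissip} separately.

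\emph{Energy gap \eqref{nonlineargap}.} Since $E(v_c)=m_0$, write $u=v_c+f_c$ and Taylor-expand $G(v_c+f_c)-G(v_c)=G'(v_c)f_c+\tfrac12 G''(v_c)f_c^2+O(|f_c|^3)$; the linear-in-$f_c$ terms cancel after integration by parts via the kink equation $\Delta v_c=G'(v_c)$, giving
\[
\E(u)=\frac12\,Q(f_c)+O\!\lt(\bdelta\int_S f_c^2\,\ud x\rt),\qquad Q(f_c):=\int_S\lt(|\nabla f_c|^2+G''(v_c)f_c^2\rt)\ud x.
\]
The upper bound in \eqref{nonlineargap} follows from $\|G''\|_\infty<\infty$. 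For the coercivity $Q(f_c)\ges\int_S(|\nabla f_c|^2+f_c^2)\ud x$ under the constraint \eqref{ELvc}, I invoke the Lassoued--Mironescu trick: differentiating the kink equation gives $v_{czzz}=G''(v_c)v_{cz}$, so setting $w:=f_c/v_{cz}$ and integrating by parts yields the identity $Q(f_c)=\int_S v_{cz}^2|\nabla w|^2\,\ud x$. Combining this with the reformulated orthogonality $\int_S v_{cz}^2 w\,\ud x=0$, Lemma \ref{2.1} in the $z$-direction, and Poincar\'e on the transverse torus $Q$ for the nonzero Fourier modes in $x'$, delivers the required coercivity; the cubic remainder is then absorbed for $\bdelta\ll 1$.

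\emph{Dissipation \eqref{nonlineardissip}.} Setting $\mu:=G'(u)-\Delta u$ and expanding via $\Delta v_c=G'(v_c)$ gives $\mu=-\Delta f_c+G''(v_c)f_c+R$ with $R=O(|f_c|^2)$, and hence
\[
\nabla\mu=-\nabla\Delta f_c+G''(v_c)\nabla f_c+G'''(v_c)f_c\,\nabla v_c+\nabla R.
\]
For the upper bound on $D$, I square, use Lemma \ref{2.1} together with the exponential decay $v_{cz}^2\les 1/((z-c)^2+1)$ to bound $\int v_{cz}^2 f_c^2\les\int|\nabla f_c|^2$, absorb cubic contributions via $\bdelta$-smallness, and invoke Lemma \ref{Four} to include $\int|\nabla^2 f_c|^2$. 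For the lower bound, isolating $\nabla\Delta f_c$ and taking $L^2$ norms with the same controls yields
\[
\int_S|\nabla^3 f_c|^2\,\ud x\les D+\int_S|\nabla f_c|^2\,\ud x,
\]
and Lemma \ref{Four} then controls $\int|\nabla^2 f_c|^2$ by these two terms.

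\emph{Main obstacle.} What remains is the coercivity $\int_S|\nabla f_c|^2\,\ud x\les D$. Testing $\mu=L f_c+R$, with $L:=-\Delta+G''(v_c)$, against $f_c$ gives
\[
\int_S f_c\,\mu\,\ud x=Q(f_c)+\int_S f_c R\,\ud x\ges\|f_c\|_{H^1}^2,
\]
thanks to the energy-gap coercivity above and $\bdelta$-smallness. I then need an upper estimate of $\lt|\int_S f_c\,\mu\,\ud x\rt|$ in terms of $D^{1/2}$; the naive $\dot H^{-1}$--$\dot H^1$ duality fails because $f_c\notin\dot H^{-1}(S)$ in general (its integral need not vanish). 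I would instead exploit the orthogonality of $f_c$ to $v_{cz}$ to neutralise the non-decaying mode, and combine a Gagliardo--Nirenberg interpolation---gaining a factor of $\bdelta$ from $\|f_c\|_\infty\le\bdelta$---with a Sobolev control of $\mu$ in some $L^p$ by $\|\nabla\mu\|_{L^2}=D^{1/2}$, closing via absorption of the remaining $\|f_c\|_{H^1}$ factor. This yields $\|f_c\|_{H^1}^2\les D$ and completes the proof of \eqref{nonlineardissip}.
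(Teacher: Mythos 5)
Your outline is right about the upper bounds (Taylor-expand around $v_c$, use $\Delta v_c=G'(v_c)$ and the $L^\infty$-smallness to absorb cubic errors) and you correctly identify that the Lassoued--Mironescu substitution $f_c=v_{cz}w$ is the relevant device. But there are two genuine gaps, both at the coercivity step, which is precisely where the paper works hardest.

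\emph{Energy gap.} You assert that from $E_\ell(f_c)=\int_S v_{cz}^2|\nabla w|^2$, the orthogonality $\int_S v_{cz}^2 w=0$, Lemma \ref{2.1}, and transverse Poincar\'e one gets $E_\ell(f_c)\ges\|f_c\|_{H^1}^2$ directly. This is not a proof: Lemma \ref{2.1} controls $\int (1+(z-c)^2)^{-1}f_c^2$ by $\int|\nabla f_c|^2$, which is a statement about $f_c$ itself and has no evident relation to the weighted Dirichlet form $\int v_{cz}^2|\nabla w|^2$ appearing on the right. What one would actually need is a weighted Poincar\'e inequality for the exponentially decaying measure $v_{cz}^2\,\ud z$ in the $z$-direction; that is a different estimate, not supplied by the cited lemmas, and you do not prove it. The paper takes a cleaner route: the Lassoued--Mironescu identity is used only to establish positivity and the rigidity statement that the kernel of $E_\ell$ consists of multiples of $v_{cz}$ (Lemmas \ref{pos} and \ref{LemLM}), and the actual spectral-gap bound $E_\ell(f)\ges\int_S f^2$ (note: only $L^2$, not $H^1$!) is obtained by a compactness/contradiction argument in Lemma \ref{lingap}, which deals with the loss of compactness at $z\to\pm\infty$ using the decay of $G''(v_c)-G''(1)$. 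The $\nabla f_c$ part of \eqref{nonlineargap} is then bootstrapped from the $L^2$ bound at the nonlinear level, not obtained from a stronger linear coercivity.

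\emph{Dissipation.} You explicitly flag the crucial estimate $\int_S|\nabla f_c|^2\les D$ as unresolved and sketch a possible route via testing against $f_c$, a Gagliardo--Nirenberg gain of $\bar\delta$, and Sobolev control of $\mu$; this is speculative and, as written, does not close (you never produce the needed duality estimate, and the orthogonality to $v_{cz}$ does not on its own cure the failure of $\dot H^{-1}$--$\dot H^1$ pairing). This is where the paper's Lemma \ref{lindis} is indispensable: it proves $D_\ell(f)\ges\int_S|\nabla f|^2$ by the same compactness/contradiction scheme, whose key input is the rigidity Lemma \ref{lemrig} stating that any $f$ orthogonal to $v_{cz}$ solving $-\Delta f+G''(v_c)f=\lambda$ must vanish. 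That rigidity is proved by averaging over the torus $Q$ to reduce to the one-dimensional result of Otto--Westdickenberg, then applying transverse Poincar\'e and Lemma \ref{LemLM}. None of this machinery appears in your proposal, and without it the lower bound in \eqref{nonlineardissip} is not established.

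In summary, the nonlinear reduction in your proposal is fine, but the two linear coercivity estimates, Lemmas \ref{lingap} and \ref{lindis}, are the heart of the proposition, and you have neither supplied them nor a substitute. I would advise reading how the paper structures the argument: Lassoued--Mironescu for positivity and rigidity, compactness for the spectral gap, and a separate rigidity lemma (reducing to the 1D case by transverse averaging) for the dissipation.
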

As in \cite{OW}, these estimates are proven by obtaining first similar bounds for the corresponding linearized quantities and then using the smallness of $\|f_c\|_{L^\infty}$ to make a Taylor expansion. 
It is in the proofs of the linear estimates that we depart the most from the one dimensional arguments used in \cite{OW}. Let us point out that a direct consequence of Proposition \ref{nonlinear} and Gagliardo-Nirenberg inequalities is the following $L^\infty$ bound.
\begin{corollary}\label{cor:gaglia}
  There exists $\bdelta>0$ such that if $f_c\in H^3(S)$ and $\|f_c\|_{L^\infty}\le \bdelta$, then for $2\le d\le 5$
\begin{equation}\label{gagliahd}
  \|f_c\|_{L^\infty}\les \E^{\frac 1 2-\frac{d'}{12}} D^{\frac{d'}{12}},
\end{equation}
where $d':=\max(3,d)$.
\end{corollary}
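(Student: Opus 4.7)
The plan is to combine the $L^2$-type estimates provided by Proposition \ref{nonlinear} with the Gagliardo--Nirenberg interpolation inequality on the strip $S$.

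First I would read off from \eqref{nonlineargap} and \eqref{nonlineardissip} the two ingredients
\begin{equation*}
\|f_c\|_{L^2}^2 \les \E \qquad\text{and}\qquad \|\nabla^k f_c\|_{L^2}^2 \les D \quad\text{for } k=1,2,3.
\end{equation*}
Next I would invoke the interpolation inequality
\begin{equation*}
\|f_c\|_{L^\infty(S)} \les \|\nabla^m f_c\|_{L^2(S)}^{\a}\,\|f_c\|_{L^2(S)}^{1-\a},\qquad \a=\frac{d}{2m},
\end{equation*}
which holds on the $d$-dimensional manifold $S=Q\times\R$ whenever $m>d/2$, the exponents being forced by scaling.

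For $d\in\{3,4,5\}$ I would take $m=3$, so that $\a=d/6\in[1/2,5/6]$, and substitute the bounds above to get
\begin{equation*}
\|f_c\|_{L^\infty}\les D^{d/12}\,\E^{(6-d)/12}=\E^{1/2-d/12}\,D^{d/12},
\end{equation*}
which is \eqref{gagliahd} with $d'=d$. For $d=2$, the choice $m=3$ only yields $\E^{1/3}D^{1/6}$, which is not of the desired form; instead I would use $m=2$, giving $\a=1/2$ and
\begin{equation*}
\|f_c\|_{L^\infty}\les D^{1/4}\,\E^{1/4},
\end{equation*}
matching \eqref{gagliahd} with $d'=3$. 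Setting $d':=\max(3,d)$ unifies the two cases, and the restriction $d\le 5$ is exactly what is needed for $m=3$ to satisfy $m>d/2$ with $\a\le 1$.

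The argument is essentially a direct computation, and I do not foresee any real obstacle. The only mildly delicate point is justifying the Gagliardo--Nirenberg inequality on $S$ rather than on $\R^d$; this is standard given the product structure $Q\times\R$ (periodic in $x'$ with $Q$ compact, and $f_c\in H^3(S)$ providing decay in $z$), and follows via a localisation/flattening argument that does not alter the Euclidean exponents.
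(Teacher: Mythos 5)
Your proposal is correct and follows essentially the same argument as the paper: Proposition \ref{nonlinear} supplies $\|f_c\|_{L^2}^2\les\E$ and $\|\nabla^k f_c\|_{L^2}^2\les D$, and the two Gagliardo--Nirenberg inequalities you invoke (third derivatives for $3\le d\le 5$, second derivatives for $d=2$) are exactly the ones used in the paper's proof.
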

\begin{proof}
 For $d=2$, \eqref{gagliahd} is a consequence of Proposition \ref{nonlinear} and of the Gagliardo-Nirenberg inequality
 \[
  \|f_c\|_{L^\infty}\les  \|f_c\|^{\frac 1 2}_{L^2}\|\nabla^2 f_c\|^{\frac 1 2}_{L^2} .
 \]
For $3\le d \le 5$,  \eqref{gagliahd} follows from  Proposition \ref{nonlinear} and another Gagliardo-Nirenberg inequality, namely
\[
 \|f_c\|_{L^\infty}\les \|f_c\|^{1-\frac d 6}_{L^2} \|\nabla^3 f_c\|^{\frac d 6}_{L^2} .
\]

\end{proof}

\subsection{Linear estimates}

Given $v_c \in \K$, we define the linearized energy gap of a function $f\in H^1(S)$ as
\[
E_\ell(f):=\int_{S} |\nabla f|^2 +G''(v_c) f^2\ud x
\] 
and the linearized dissipation as
\[
D_\ell(f):=\int_{S} |\nabla(-\Delta f+G'(v_c) f)|^2\ud x\,.
\]
In order to prove Proposition \ref{nonlinear}, we  start by showing the analogous estimates for the linearized quantities (see Lemma \ref{lingap} and Lemma \ref{lindis} below).
In the one-dimensional case,  such estimates rely on a rigidity argument developed in \cite[Lemma 3.4]{OW}. Since the proof 
of this result does not seem to extend easily  to the higher dimensional setting, we adopt a different approach (see Lemma \ref{lemrig} below) and use  the Lassoued-Mironescu trick \cite{LM} (see also \cite{GM,GMM} for applications of this idea in different contexts). 
Since $v_{cz}>0$, we can always write $f=v_{cz} g$ for some function $g$. 
Let us point out that   if  $f\in C_c^\infty(S)$ (respectively $f\in H^1(S)$), then $g:=\frac {f}{v_{cz}}\in C_c^\infty(S)$ (respectively  $g\in H^1_\loc(S)$).
\begin{lemma}\label{pos}
For any  $f\in H^1(S)$, it holds
\begin{equation}\label{claimlin}
E_\ell(f)=\int_{S} v_{cz}^2|\nabla g|^2\ud x\ge 0\,.
\end{equation}
\end{lemma}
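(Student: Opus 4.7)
The plan is to use the Lassoued--Mironescu trick, whose algebraic heart is the observation that the strictly positive function $v_{cz}$ is a zero mode of the linearized operator: differentiating the ODE \eqref{ELv} (applied to $v_c$) in the $z$-variable, and using $\Delta v_{cz} = v_{czzz}$, gives
$$-\Delta v_{cz} + G''(v_c) v_{cz} = 0 \quad \text{on } S.$$

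First I would prove the identity for test functions $f\in C_c^\infty(S)$, in which case $g=f/v_{cz}\in C_c^\infty(S)$ as well. Expanding $\nabla f = v_{cz}\nabla g + g\nabla v_{cz}$ and writing $2g\nabla g = \nabla(g^2)$ gives
$$|\nabla f|^2 = v_{cz}^2|\nabla g|^2 + v_{cz}\nabla v_{cz}\cdot\nabla(g^2) + g^2|\nabla v_{cz}|^2.$$
An integration by parts on the cross term, with no boundary contribution thanks to compact support, yields
$$\int_S v_{cz}\nabla v_{cz}\cdot\nabla(g^2)\ud x = -\int_S g^2\bigl(|\nabla v_{cz}|^2 + v_{cz}\Delta v_{cz}\bigr)\ud x = -\int_S g^2|\nabla v_{cz}|^2\ud x - \int_S G''(v_c)f^2\ud x,$$
where the zero-mode relation is used in the last step. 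Inserting this back, the $g^2|\nabla v_{cz}|^2$ terms cancel exactly and the $G''(v_c)f^2$ term is absorbed by the definition of $E_\ell$, leaving $E_\ell(f) = \int_S v_{cz}^2|\nabla g|^2\ud x$.

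To extend the identity to arbitrary $f\in H^1(S)$ I would argue by density of $C_c^\infty(S)$ in $H^1(S)$. The key remark is that $\nabla v_{cz}/v_{cz}$ is bounded on $S$: it vanishes in the $x'$-components, while in the $z$-component it equals $v_0''/v_0'$ evaluated at $z-c$, which is the bounded function $-\sqrt 2 \tanh((z-c)/\sqrt 2)$. Consequently the pointwise identity
$$v_{cz}\nabla g = \nabla f - f\,\frac{\nabla v_{cz}}{v_{cz}}$$
defines a continuous linear map $H^1(S)\to L^2(S)$. Picking $f_n\in C_c^\infty(S)$ with $f_n\to f$ in $H^1(S)$, we have $E_\ell(f_n)\to E_\ell(f)$ because $G''(v_c)\in L^\infty(S)$, and $v_{cz}\nabla g_n\to v_{cz}\nabla g$ in $L^2(S)$; passing to the limit in the identity established on $C_c^\infty(S)$ gives the conclusion, with non-negativity as an immediate corollary.

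The expected main obstacle is the density step: $g$ itself only belongs to $H^1_\loc(S)$ and can grow at infinity since $v_{cz}$ decays exponentially, so one has no direct control on $\|\nabla g\|_{L^2}$. The uniform $L^\infty$ bound on $\nabla(\log v_{cz})$ is precisely what rescues the argument, allowing us to work with $v_{cz}\nabla g$ as the fundamental $L^2$ object rather than $\nabla g$ alone.
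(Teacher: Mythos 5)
Your proof is correct and takes essentially the same approach as the paper: the algebraic step is the same computation rearranged (the paper tests the weak form of the zero-mode equation $-\Delta v_{cz}+G''(v_c)v_{cz}=0$ against $\phi=v_{cz}g^2$, while you expand $|\nabla f|^2$ and integrate by parts the cross term, which is the dual organization of the identical cancellation), and your density argument hinges on the same observation the paper uses, namely $|v_{czz}|\lesssim v_{cz}$, which you phrase as boundedness of $\nabla v_{cz}/v_{cz}$ to make $f\mapsto v_{cz}\nabla g$ a bounded map $H^1(S)\to L^2(S)$.
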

\begin{proof}
We assume without loss of generality that $v_c=v_0=:v$\,.

\noindent
We first show that \eqref{claimlin} holds for $f\in C_c^\infty(S)$. By differentiating  \eqref{ELv}, we see that 
\begin{equation}\label{eguagliaa0}
-\Delta v_{z} +G''(v)v_{z}=0\,,
\end{equation}
which,  multiplying by a test function $\phi\in C_c^\infty(S)$ and integrating by parts, yields
\[
 \int_{S} \nabla v_{z}\cdot \nabla \phi+G''(v) v_{z} \phi \ud x=0\,.
\]
By choosing $\phi:=v_z g^2$ (which is in $C_c^\infty(S)$ since $f$ is), we obtain
\begin{equation}\label{LM1}
 \int_{S} g^2 |\nabla v_z|^2+2 v_z g\, \nabla v_z\cdot \nabla g+G''(v) v_z^2 g^2\ud x=0\,,
\end{equation}
whence \eqref{claimlin} follows, noticing that
\begin{equation*}
\begin{aligned}
E_\ell(f)&=\int_{S} v_z^2|\nabla g|^2 +g^2|\nabla v_z|^2 +2v_zg\,\nabla v_z\cdot \nabla g +G''(v) v_z^2 g^2\ud x\\
&=\int_{S}v_z^2|\nabla g|^2\ud x\,.
\end{aligned}
\end{equation*}

In order to show \eqref{claimlin} for  $f\in H^1(S)$, we adopt the following approximation argument. Let $f\in H^1(S)$ and let $\{f_n\}\subset C_c^\infty(S)$ be such that $\|f_n-f\|_{H^1}\to 0$ as $n\to\infty$. By \eqref{claimlin}, we have
\begin{equation}\label{converg}
\int_{S} v_z^2|\nabla g_n|^2\ud x=E_\ell(f_n)\to E_\ell(f)\qquad\textrm{ as }n\to\infty\,.
\end{equation}
Since $|v_{zz}|\les|v_z|$, we have 
\begin{align*}
 \lim_{n\to \infty} \int_{S} |v_z\nabla g_n-v_z \nabla g|^2 \ud x&\les \lim_{n\to \infty} \int_{S} |\nabla (v_z g_n)- \nabla (v_z g)|^2 \ud x+\int_{S} |v_{zz}(g_n-g)|^2 \ud x\\
 &\les \lim_{n\to \infty}\int_{S} |\nabla f_n-\nabla f|^2 \ud x+\int_{S} |f_n-f|^2 \ud x\\
 &=0\,.
\end{align*}
Therefore by \eqref{converg}
\[
 E_\ell(f)=\lim_{n\to \infty} E_\ell(f_n)=\lim_{n\to \infty} \int_{S} v_z^2|\nabla g_n|^2\ud x=\int_{S} v_z^2|\nabla g|^2\ud x\,.
\]
\end{proof}
We can now use \eqref{claimlin} to prove that up to a multiplicative factor, the only critical point of $E_\ell$ is $v_{cz}$.
\begin{lemma}\label{LemLM}
Let $v_c\in\K$. If $f\in H^1(S)$ is a solution of
\begin{equation}\label{lineq}
-\Delta f+G''(v_c)\,f=0\,,
\end{equation}
then $f=\alpha\,v_{cz}$ for some $\alpha\in\R$.
\end{lemma}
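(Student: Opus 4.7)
My plan is to exploit the identity from Lemma \ref{pos} together with the fact that $f$ solves the Euler--Lagrange equation associated with $E_\ell$. The key point is that \eqref{lineq} is precisely the Euler--Lagrange equation of $E_\ell$, so testing \eqref{lineq} against $f$ itself gives $E_\ell(f)=0$, and then Lemma \ref{pos} (expressing $E_\ell$ as a nonnegative square via the Lassoued--Mironescu factorization $f=v_{cz}g$) forces $\nabla g=0$, from which the conclusion follows immediately.

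More concretely, I would first justify the integration by parts. Since $f\in H^1(S)$ and $G''(v_c)$ is bounded, the distributional equation \eqref{lineq} and elliptic regularity give $\Delta f\in L^2(S)$, hence $f\in H^2(S)$ by Lemma \ref{Four}. In order to test against $f$ (which is not compactly supported), I would introduce a radial cutoff $\chi_R\in C_c^\infty(S)$ with $\chi_R\equiv 1$ on $\{|x|\le R\}$, $\chi_R\equiv 0$ on $\{|x|\ge 2R\}$ and $|\nabla \chi_R|\le C/R$. Testing \eqref{lineq} against $\chi_R^2 f\in H^1(S)$ with compact support yields
\[
\int_S \chi_R^2\bigl(|\nabla f|^2+G''(v_c)f^2\bigr)\ud x=-2\int_S \chi_R f\,\nabla\chi_R\cdot\nabla f\ud x,
\]
and the right-hand side is bounded by $\tfrac{1}{2}\int_S\chi_R^2|\nabla f|^2\ud x+2\int_S|\nabla\chi_R|^2 f^2\ud x$, which tends to zero as $R\to\infty$ because $f\in L^2(S)$. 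Passing to the limit, I obtain $E_\ell(f)=0$.

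Next, writing $f=v_{cz}g$ with $g=f/v_{cz}\in H^1_{\loc}(S)$ (using $v_{cz}>0$), Lemma \ref{pos} gives
\[
0=E_\ell(f)=\int_S v_{cz}^2|\nabla g|^2\ud x,
\]
so $v_{cz}\nabla g=0$ almost everywhere on $S$. Since $v_{cz}>0$ pointwise on $S$ and $S=Q\times \R$ is connected, this forces $g\equiv \alpha$ for some $\alpha\in\R$, and therefore $f=\alpha v_{cz}$, which is the claim.

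The only delicate point is the cutoff argument used to make sense of the integration by parts on the unbounded strip; once that is established, the Lassoued--Mironescu identity of Lemma \ref{pos} does all the work, turning a spectral-type rigidity statement into a one-line algebraic consequence of $\int v_{cz}^2|\nabla g|^2=0$. This is precisely the reason why the authors introduced this factorization in place of the more intricate one-dimensional rigidity argument of \cite[Lemma 3.4]{OW}.
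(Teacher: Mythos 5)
Your strategy is sound and closely parallels the paper's own proof: both hinge on Lemma \ref{pos} to convert $E_\ell(f)=0$ into $\int_S v_{cz}^2|\nabla g|^2\ud x=0$ and then read off $g\equiv\alpha$. The difference is that the paper shows $f$ is a global minimizer of $E_\ell$ (via the expansion $E_\ell(f+\varphi)=E_\ell(f)+E_\ell(\varphi)$ for $\varphi\in C_c^\infty$, then density) and deduces $E_\ell(f)\le E_\ell(v_{cz})=0$, whereas you test the equation directly against (a cutoff of) $f$ to extract $E_\ell(f)=0$ in one step. This is a legitimate streamlining. The preliminary observation that $f\in H^2(S)$ via $\Delta f=G''(v_c)f\in L^2$ and Lemma \ref{Four} is fine, though not strictly needed since the weak formulation already makes sense of the pairing with $\chi_R^2 f\in H^1$ of compact support.

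There is, however, a concrete error in the limiting step. You bound the right-hand side $-2\int_S\chi_R f\,\nabla\chi_R\cdot\nabla f\ud x$ by
\[
\tfrac{1}{2}\int_S\chi_R^2|\nabla f|^2\ud x+2\int_S|\nabla\chi_R|^2f^2\ud x
\]
and then claim this tends to zero. It does not: the first term converges to $\tfrac12\int_S|\nabla f|^2\ud x$, which is generically positive, so the conclusion "$E_\ell(f)=0$" does not follow from that estimate as written. (If instead you tried to absorb the first term into the left-hand side, you would only obtain $\tfrac12\int_S|\nabla f|^2\ud x+\int_S G''(v_c)f^2\ud x\le0$, which is not $E_\ell(f)\le0$.) The fix is simple: estimate the right-hand side directly by Cauchy--Schwarz,
\[
\Bigl|\,2\int_S\chi_R f\,\nabla\chi_R\cdot\nabla f\ud x\,\Bigr|\le 2\,\|\nabla\chi_R\|_{L^\infty}\,\|f\|_{L^2}\,\|\nabla f\|_{L^2}\lesssim \frac{1}{R}\,\|f\|_{H^1}^2\longrightarrow 0,
\]
and then pass to the limit on the left-hand side by dominated convergence (using $f\in H^1(S)$ and boundedness of $G''(v_c)$) to obtain $E_\ell(f)=0$. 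Alternatively, use $\epsilon$-Young's inequality $2ab\le \epsilon a^2+\epsilon^{-1}b^2$, absorb $\epsilon\int_S\chi_R^2|\nabla f|^2$ into the left, send $R\to\infty$ and then $\epsilon\to0$, and combine with the nonnegativity from Lemma \ref{pos}. Either route repairs the argument; the rest of your proof is correct.
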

\begin{proof}
Before starting the proof, let us point out that by elliptic regularity, any weak solution of \eqref{lineq} is actually a smooth classical solution of this equation.\\
We assume without loss of generality that $v_c=v_0=:v$.
We preliminarily notice that, by \eqref{eguagliaa0} and integration by parts, $E_\ell(\alpha v_z)=0$ for any $\alpha\in\R$\,.
\noindent
We first show the claim assuming that $f$ minimizes $E_\ell$ in $H^1(S)$.
Indeed,  by Lemma \ref{pos},   we have 
$$
E_\ell(f)= \int_{S}v_z^2|\nabla g|^2\ud x\ge 0=E_\ell( v_z)\,,
$$
which implies that $E_\ell(f)=\int_{S}v_z^2|\nabla g|^2\ud x=0$ and, in turns, that  $f=\alpha\, v_z$ for some $\alpha\in\R$.

We thus only need to prove that if  $f$ is a solution of \eqref{lineq} in $H^1(S)$, then $f$ is a minimizer of $E_\ell$ in $H^1(S)$, i.e. $f$ satisfies
\begin{equation}\label{minim}
E_\ell(f+\ffi)\ge  E_{\ell}(f)\qquad\qquad\textrm{ for any }\ffi\in H^1(S)\,. 
\end{equation}
Consider first  $\ffi\in C_c^\infty(S)$.  Integrating by parts, we get
\begin{eqnarray*}
E_\ell(f+\ffi)&=&\int_{S}|\nabla (f+\ffi)|^2+G''(v)(f+\ffi)^2\ud x\\
&=&E_\ell(f)+2\int_{S}\nabla f\cdot\nabla \ffi+G''(v)f\ffi \ud x+E_\ell(\ffi)\\
&=&E_\ell(f)+2\int_{S}(-\Delta f+G''(v)f)\ffi\ud x+E_\ell(\ffi)=E_{\ell}(f)+E_{\ell}(\ffi)\ge E_{\ell}(f)\,,
\end{eqnarray*}
where in the last line we have used \eqref{lineq} and \eqref{claimlin}.
The general case $\ffi\in H^1(S)$ then follows by approximation.
\end{proof}

We can now prove the desired rigidity lemma.
\begin{lemma}\label{lemrig}
 Let $f$ be such that $\nabla f\in L^2(S)$ and $\nabla^2 f\in L^2(S)$  and let  $v_c\in\K$ be such that 
 \begin{equation}\label{Elf}
  \int_{S} f v_{cz}\ud x=0
 \end{equation}
  and 
 \begin{equation}\label{hyprig}
 -\Delta f +G''(v_c)f =\lambda
 \end{equation}
 for some $\lambda\in \R$. Then $f=0$.
\end{lemma}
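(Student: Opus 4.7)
The plan is to proceed in three steps: first show $\lambda=0$, then show the $x'$-average of $f$ vanishes (reducing to a 1D ODE), and finally apply Lemma \ref{LemLM}.

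\emph{Step 1: $\lambda=0$.} I would test \eqref{hyprig} against $v_{cz}$ on $S$. The right-hand side equals $\lambda\int_S v_{cz}\ud x=2\lambda$ (using $|Q|=1$). For the left-hand side, differentiating \eqref{ELv} yields $-\Delta v_{cz}+G''(v_c)v_{cz}=0$, so two integrations by parts give
\[
 \int_S v_{cz}\bigl(-\Delta f+G''(v_c)f\bigr)\ud x = \int_S f\bigl(-\Delta v_{cz}+G''(v_c)v_{cz}\bigr)\ud x = 0.
\]
The boundary terms in $z=\pm\infty$ vanish because $v_{cz},v_{czz}$ decay exponentially, while $f$ and $\partial_z f$ grow at most polynomially in $z$: indeed, from $\nabla f,\nabla^2 f\in L^2(S)$ we get for a.e.\ $x'\in Q$ that $|f(x',z)-f(x',0)|\lesssim|z|^{1/2}\|\partial_z f(x',\cdot)\|_{L^2(\R)}$, and similarly for $\partial_z f$. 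Hence $\lambda=0$.

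\emph{Step 2: the $x'$-average of $f$ is zero.} Set $\bar f(z):=\int_Q f(x',z)\ud x'$. Averaging \eqref{hyprig} over $Q$ and using $\int_Q\Delta' f\ud x'=0$ (periodicity), I obtain the one-dimensional ODE
\[
-\bar f''+G''(v_c)\bar f=0 \quad\text{on } \R.
\]
By Fubini and Cauchy--Schwarz, $\bar f',\bar f''\in L^2(\R)$. Since $G''(v_c(z))\to 2$ as $z\to\pm\infty$, the solution space of this ODE is two-dimensional, spanned by $v_{cz}$ (which decays exponentially at both ends) and a second independent solution that grows exponentially at $+\infty$ or $-\infty$. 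The integrability condition $\bar f'\in L^2(\R)$ rules out the growing solution, so $\bar f=\alpha\,v_{cz}$ for some $\alpha\in\R$. Plugging this into \eqref{Elf} gives $\alpha\,|Q|\,\|v_{cz}\|_{L^2(\R)}^2=0$, hence $\alpha=0$ and $\bar f\equiv 0$.

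\emph{Step 3: conclusion via Lemma \ref{LemLM}.} Since $f$ has zero $x'$-average for every $z$, the Poincar\'e inequality on $Q$ yields $\|f\|_{L^2(S)}\lesssim\|\nabla' f\|_{L^2(S)}<\infty$, so $f\in H^1(S)$. Because $-\Delta f+G''(v_c)f=0$ (Step 1), Lemma \ref{LemLM} applies and gives $f=\alpha' v_{cz}$ for some $\alpha'\in\R$. Using \eqref{Elf} once more, $\alpha'\|v_{cz}\|_{L^2(S)}^2=0$, so $\alpha'=0$ and $f\equiv 0$.

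The main technical point I expect is Step 1: the integration by parts must be justified only from $\nabla f,\nabla^2 f\in L^2(S)$ (no $L^2$ control on $f$ itself), but the polynomial growth argument together with the exponential decay of $v_{cz}$ and $v_{czz}$ suffices. The 1D asymptotic analysis in Step 2 and the invocation of Lemma \ref{LemLM} in Step 3 are then routine.
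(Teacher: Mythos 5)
Your proof is correct, but it takes a genuinely different route from the paper's. The paper averages \eqref{hyprig} over $Q$ first, obtaining the inhomogeneous 1D problem $-h_{zz}+G''(v)h=\lambda$ with $\int_\R h v_z\ud z=0$ for $h(z):=\int_Q f(x',z)\ud x'$, and then invokes the one-dimensional rigidity lemma \cite[Lemma 3.4]{OW} as a black box to conclude $h=\alpha v_z$, from which $\alpha=0$, $h=0$ and $\lambda=0$ follow simultaneously. You instead prove $\lambda=0$ first by testing the equation against $v_{cz}$ over $S$ (a duality argument that requires justifying two integrations by parts with only $\nabla f,\nabla^2 f\in L^2(S)$ available — your sublinear-growth bound for $f$ and $\partial_z f$ in $z$, combined with the exponential decay of $v_{cz}$, does handle this, though strictly one should note in addition that $f(\cdot,0)$ is integrable over the compact $Q$ by smoothness so that the boundary terms vanish after integrating in $x'$), and only then average to get the \emph{homogeneous} ODE $-\bar f''+G''(v_c)\bar f=0$, which you solve by an explicit asymptotic analysis (exponential dichotomy near $\pm\infty$, using $G''(\pm1)=2$) rather than citing \cite{OW}. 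Step 3, Poincar\'e plus Lemma \ref{LemLM}, is the same as the paper's. The trade-off: your argument is more self-contained (it replaces the external citation of \cite[Lemma 3.4]{OW} by an elementary ODE phase-plane analysis), at the cost of the extra testing step to remove $\lambda$ and the accompanying care with the integration by parts; the paper's route is shorter but leans on the 1D result proved elsewhere.
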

\begin{proof}
As above, by elliptic regularity, every solution $f$ of \eqref{hyprig} is smooth. Again we assume $v_c=v_0=:v$\,.\\
\noindent
 Set $h(z):=\int_{Q} f(x',z)\ud x'$. Then, integrating \eqref{hyprig} and \eqref{Elf} with respect to $x'$, we get
 \[
  -h_{zz}+G''(v) h=\lambda \qquad \textrm{and} \qquad \int_{\R} hv_z\ud z=0\,,
 \]
where in the first equation we have used that for every $z\in \R$\,,
$$
{\int_{Q}-\Delta'f(x',z)\ud x'=0}\,.
$$
Therefore, \cite[Lemma 3.4]{OW} applies and $h=\alpha v_z$ for some $\alpha\in \R$\,. Since $ \int_{\R} hv_z\ud z=0$\,, this implies that $\alpha=0$, and, in turns, that $h=0$ and $\lambda=0$. 
By Poincar\'e inequality on $Q$ and the fact that $h=0$, for every $z\in \R$,
\[
 \int_Q f^2(x',z) \ud x'\les \int_Q |\nabla' f(x',z)|^2 \ud x',
\]
which after integration gives 
\[
 \int_S f^2 \ud x \les \int_S |\nabla f|^2\ud x
\]
so that $f\in H^2(S)$. Therefore, in view of \eqref{hyprig},  we may apply Lemma \ref{LemLM} and conclude that  $f=\alpha' v_z$ for some $\alpha'\in \R$. Since $h=0$, we must have $f=\alpha'v_z=0$.
\end{proof}

With this rigidity result in hand, we can prove a linear gap estimate, which is the analog of \cite[Lemma 3.1]{OW}. 

\begin{lemma}\label{lingap}
 For every $f\in H^1(S)$ and $v_c\in\K$ satisfying \eqref{Elf}, there holds
\begin{equation}\label{lingapf}
 E_{\ell}(f)\ges  \int_{S} f^2\ud x.
\end{equation}
\end{lemma}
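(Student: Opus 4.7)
My approach is by contradiction, following the usual concentration--compactness scheme adapted to the unbounded strip $S$. If \eqref{lingapf} failed, then after normalization there would exist a sequence $\{f_n\}\subset H^1(S)$ with $\int_S f_n v_{cz}\,\ud x=0$, $\|f_n\|_{L^2}^2=1$, and $E_\ell(f_n)\to 0$. Since $G''(v_c)=3v_c^2-1\ge -1$, one has $\int_S|\nabla f_n|^2\,\ud x\le E_\ell(f_n)+\|f_n\|_{L^2}^2$, so $\{f_n\}$ is bounded in $H^1(S)$. Extracting a subsequence, I would obtain $f_n\rightharpoonup f$ weakly in $H^1(S)$ and strongly in $L^2_{\loc}(S)$; the orthogonality condition passes to the weak limit because $v_{cz}$ decays exponentially and hence belongs to $L^2(S)$.

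The next step is to extract information about $f$ in spite of the potential loss of $L^2$-mass along the $z$-direction. I plan to use the decomposition $G''(v_c)=2-3\,\mathrm{sech}^2\bigl((\cdot-c)/\sqrt 2\bigr)$: the constant part is handled by weak lower-semicontinuity of $\|\cdot\|_{L^2}^2$, while the localized correction is treated by a standard $R$-truncation argument that exploits the exponential decay of $\mathrm{sech}^2$ together with the uniform $L^2$ bound on $f_n$ to obtain
\[
\int_S \mathrm{sech}^2\bigl((z-c)/\sqrt 2\bigr)\, f_n^2\,\ud x \longrightarrow \int_S \mathrm{sech}^2\bigl((z-c)/\sqrt 2\bigr)\,f^2\,\ud x.
\]
Combining this strong convergence with $\int_S|\nabla f|^2\,\ud x\le\liminf_n\int_S|\nabla f_n|^2\,\ud x$ and $\|f\|_{L^2}^2\le 1$, and rearranging $E_\ell(f_n)=\int_S|\nabla f_n|^2\,\ud x+2\|f_n\|_{L^2}^2-3\int_S\mathrm{sech}^2 f_n^2\,\ud x$, I would deduce $E_\ell(f)\le 2(\|f\|_{L^2}^2-1)\le 0$.

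Rigidity then closes the argument via Lemma \ref{pos}. On one side, $E_\ell(f)\ge 0$, so combined with the previous inequality both bounds become equalities: $E_\ell(f)=0$ and $\|f\|_{L^2}^2=1$. On the other side, Lemma \ref{pos} gives $\int_S v_{cz}^2|\nabla(f/v_{cz})|^2\,\ud x=0$. Since $v_{cz}>0$ and $S$ is connected, $f/v_{cz}$ must equal a constant $\alpha$, so $f=\alpha v_{cz}$; the orthogonality $\int_S f v_{cz}\,\ud x=0$ then forces $\alpha=0$ and hence $f\equiv 0$, contradicting $\|f\|_{L^2}^2=1$.

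The main obstacle is precisely the non-compactness of $S$ in the $z$-direction, which a priori allows $L^2$-mass to escape at $z=\pm\infty$ and prevents a Rellich-type argument on the full strip. The splitting of $G''(v_c)$ into a constant plus an exponentially decaying correction is what circumvents this: weak lower-semicontinuity handles the dangerous constant term and strong convergence is only needed for the well-localized remainder. This is also the step where the one-dimensional arguments of \cite{OW}, which relied on ODE rigidity, are replaced by the Lassoued--Mironescu identity from Lemma \ref{pos}, consistently with the overall strategy announced in the introduction.
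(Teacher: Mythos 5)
Your proof is correct and takes essentially the same route as the paper: argue by contradiction, pass to a weak $H^1$ limit along a normalized minimizing sequence, exploit the exponential localization of the non-constant part of $G''(v_c)$ to prevent loss of compactness along the strip, and use the Lassoued--Mironescu positivity (Lemma~\ref{pos}) as the rigidity mechanism. The only difference is organizational: you make the decomposition $G''(v_c)=2-3\operatorname{sech}^2((\cdot-c)/\sqrt2)$ explicit and obtain $\|f\|_{L^2}^2=1$ together with $E_\ell(f)=0$ in a single pass (also bypassing Lemma~\ref{LemLM} by reading rigidity directly off $\int_S v_{cz}^2|\nabla(f/v_{cz})|^2\ud x=0$), whereas the paper first invokes weak lower semicontinuity of $E_\ell$ to get $f=0$ and then runs a separate truncation argument to show $\|f_n\|_{L^2}\to0$; both are correct and equivalent in substance.
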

\begin{proof}
 We argue by contradiction assuming that there exists a sequence $\{f_n\}\subset H^1(S)$ satisfying \eqref{Elf},
 \begin{equation}\label{hypabsurdlin}
 \int_{S} f_n^2\ud x=1
 \end{equation}
  and
  \begin{equation}\label{convEll}
\lim_{n\to\infty}E_\ell(f_n)= 0.
 \end{equation}
 Since $f_n$ is uniformly bounded in $H^1$, up to a subsequence, $f_n\weaklim f$ in $H^1$, for some $f\in H^1(S)$ satisfying \eqref{Elf}.  Moreover, the lower  semicontinuity of $E_\ell$, \eqref{convEll} and Lemma \ref{pos} imply that $E_\ell(f)=0$. 
 Hence, in view of  Lemma \ref{pos}, $f$ is a minimizer of $E_\ell$ in $H^1(S)$ and using  Lemma \ref{LemLM} together with  \eqref{Elf} we obtain $f=0$. 

\noindent
We finally prove that $\|f_n\|_{L^2}\to 0$, which provides a contradiction to \eqref{hypabsurdlin}. 
For this purpose, we first show that 
\begin{equation}\label{toprovelin}
 \lim_{n\to \infty}\int_{S} |G''(1)-G''(v_c)| f_n^2\ud x=0\,.
\end{equation}
Let $Z>0$ to be fixed. Since $Q\times(-Z,Z)$ is bounded, $f_n\to f\equiv 0$ strongly in $L^2(Q\times(-Z,Z))$. Therefore on the one hand
\[
\lim_{n\to \infty} \int_{-Z}^{Z}\int_{Q}|G''(1)-G''(v_c)|f_n^2\ud x'\ud z=0\,.
\]
On the other hand, since $G''(v_c(z))\to G''(1)$ as $z\to \pm \infty$, from \eqref{hypabsurdlin}, we get
\[
 \int_{|z|\ge Z} \int_{Q} |G''(1)-G''(v_c)|f_n^2\ud x'\ud z\le \sup_{|z|\ge Z} |G''(1)-G''(v_c)|
\]
which tends to zero as $Z\to +\infty$ uniformly in $n$. This proves \eqref{toprovelin}.  Finally, by \eqref{toprovelin} and  \eqref{hypabsurdlin}, we obtain
\[
0= \lim_{n\to +\infty}\int_{S} (G''(1)-G''(v_c)) f_n^2\ud x+E_\ell(f_n)=\lim_{n\to +\infty}\int_{S} |\nabla f_n|^2 +G''(1) f_n^2\ud x\,, 
\]
and, since $G''(1)>0$, this implies
\[  \int_{S} f_n^2\ud x \to 0,\]
thus concluding the proof of Lemma \ref{lingap}.
\end{proof}
We now turn to the dissipation estimate (see \cite[Lemma 3.2]{OW}).
\begin{lemma}\label{lindis}
 For any $f\in H^3(S)$ satisfying \eqref{Elf}, it holds
 \[
  D_\ell(f)\ges  \int_{S} |\nabla f|^2\ud x.
 \]

\end{lemma}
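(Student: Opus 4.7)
The plan is to reduce the problem to the one-dimensional dissipation estimate \cite[Lemma 3.2]{OW} by decomposing $f$ into its $x'$-average and its mean-zero part in $x'$, and handling each separately. Writing
\[
\bar f(z) := \int_Q f(x',z)\ud x', \qquad \tilde f(x',z) := f(x',z) - \bar f(z),
\]
and defining $\bar h$, $\tilde h$ analogously from $h := -\Delta f + G''(v_c)\, f$, an orthogonality computation gives the Pythagorean identity $\|\nabla f\|_{L^2(S)}^2 = \|\bar f_z\|_{L^2(\R)}^2 + \|\nabla \tilde f\|_{L^2(S)}^2$. Since $v_c$, and hence $G''(v_c)$, depends only on $z$, integrating the equation $h = -\Delta f + G''(v_c) f$ over $Q$ yields
\[
\bar h = -\bar f_{zz} + G''(v_c)\,\bar f, \qquad \tilde h = -\Delta \tilde f + G''(v_c)\, \tilde f,
\]
while the condition \eqref{Elf} translates into $\int_\R \bar f\, v_{cz}\ud z = 0$.

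For the zero-mode $\bar f$, the situation is exactly the one-dimensional setting of \cite[Lemma 3.2]{OW}: a function on $\R$ that is $L^2$-orthogonal to $v_{cz}$ and satisfies $-\bar f_{zz} + G''(v_c)\bar f = \bar h$. Applying that result gives
\[
\int_\R |\bar f_z|^2\ud z \ \les\ \int_\R |\bar h_z|^2 \ud z \ \le\ \|\partial_z h\|_{L^2(S)}^2 \ \le\ D_\ell(f),
\]
where the second inequality follows from Jensen applied to $\partial_z\bar h(z) = \int_Q \partial_z h(x',z)\ud x'$.

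For the mean-zero part $\tilde f$, the key observation is that the first non-zero eigenvalue $(2\pi)^2$ of $-\Delta'$ on the torus $Q$ strictly exceeds $\|G''\|_{L^\infty([-1,1])}=2$. Testing the equation $\tilde h = -\Delta \tilde f + G''(v_c)\tilde f$ against $\tilde f$, and using $\|\tilde f\|_{L^2(S)}^2 \le (2\pi)^{-2}\|\nabla'\tilde f\|_{L^2(S)}^2$, one obtains after absorption a coercive estimate
\[
\|\nabla \tilde f\|_{L^2(S)}^2 \ \les\ \|\tilde h\|_{L^2(S)}^2.
\]
A further use of Poincar\'e on $Q$ (now applied to $\tilde h$, which has zero $x'$-average by construction) gives $\|\tilde h\|_{L^2(S)}^2 \le (2\pi)^{-2}\|\nabla' h\|_{L^2(S)}^2 \le D_\ell(f)$. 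Combining the two estimates yields the conclusion $\int_S |\nabla f|^2 \les D_\ell(f)$.

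The main point requiring care is the coercivity step for $\tilde f$, which is the analog in higher dimensions of the rigidity-and-contradiction argument used for Lemma \ref{lingap}. Here, however, one does not need any contradiction argument nor any orthogonality on $\tilde f$: the quantitative gap between the first eigenvalue of $-\Delta'$ on $Q$ and $\|G''(v_c)\|_\infty$ is enough to close the estimate directly. All the ``rigidity'' of the problem is thus concentrated in the zero Fourier mode, where it is provided by the already-established one-dimensional result \cite[Lemma 3.2]{OW}.
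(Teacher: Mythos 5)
Your argument is correct, and it takes a genuinely different -- and more elementary -- route than the paper. The paper proves Lemma \ref{lindis} by a compactness/contradiction scheme: it assumes a sequence $f_n$ with $\|\nabla f_n\|_{L^2}=1$ and $D_\ell(f_n)\to 0$, derives uniform $H^3$-type bounds (via Lemmas \ref{2.1} and \ref{Four}), extracts a weak limit $f$ with $D_\ell(f)=0$, invokes the rigidity Lemma \ref{lemrig} (itself built on the Lassoued--Mironescu trick of Lemma \ref{LemLM} together with the one-dimensional rigidity of \cite[Lemma 3.4]{OW}) to conclude $f=0$, and finally upgrades to strong convergence to contradict the normalization. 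Your proof instead decomposes $f$ and $h:=-\Delta f + G''(v_c)f$ into the zero Fourier mode in $x'$ and its orthogonal complement, observing that the operator $-\Delta + G''(v_c)$ respects this splitting because $G''(v_c)$ depends only on $z$; the zero mode is handled by directly quoting the quantitative one-dimensional estimate \cite[Lemma 3.2]{OW}, while the mean-zero part is closed by an explicit coercivity estimate exploiting the Poincar\'e gap $(2\pi)^2$ on $Q$. Both routes are valid. Yours is shorter, fully quantitative, and avoids weak compactness, Lemma \ref{lemrig}, and the Lassoued--Mironescu machinery for this lemma; the trade-off is that it leans on the specific normalization $Q=\R^{d-1}/\Z^{d-1}$ so that the spectral gap dominates the negative part of $G''$, whereas the paper's soft argument would carry over to any fixed torus (at the price of a non-explicit constant) and runs in parallel with the proof of the companion Lemma \ref{lingap}. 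One minor point: what the absorption step actually requires is $-\inf_{[-1,1]}G''=1<(2\pi)^2$ (the positive part of $G''$ only helps coercivity); your stated comparison $\|G''\|_{L^\infty([-1,1])}=2<(2\pi)^2$ is sufficient but slightly stronger than needed.
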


\begin{proof}
Once again, we may assume without loss of generality that $v_c=v_0=:v$.

We argue by contradiction and assume that there exists a sequence of functions $f_n \in H^3(S)$ such that \eqref{Elf},
\begin{equation}\label{2hyp}
 \int_{S} |\nabla f_n|^2\ud x=1
\end{equation}
and
\begin{equation}\label{3hyp}
D_\ell(f_n) \to 0 
\end{equation}
hold. First of all, we claim that \eqref{2hyp} improves to
\begin{equation}\label{somma1}
 \int_{S} \frac{1}{z^2+1}f_n^2 + |\nabla f_n|^2+ |\nabla^2 f_n|^2 + |\nabla^3 f_n|^2 \ud x\lesssim 1\,.
\end{equation}
Indeed, by Lemma \ref{2.1}
\begin{equation}\label{conto1} 
\int_{S} \frac{1}{z^2+1}f_n^2\ud x \lesssim \int_{ S} |\nabla f_n|^2\ud x\,.
\end{equation}
Furthermore, by \eqref{3hyp}, we have
\begin{equation*}
\begin{aligned}
 \int_{S} |\nabla  \Delta f_n|^2 \ud x&\lesssim  \int_{S} \left|\nabla(\Delta f_n- G''(v)f_n)\right|^2\ud x+ \int_{S} | \nabla (G''(v)f_n )|^2 \ud x\\
& = D_\ell(f_n)+ \int_{S}  | \nabla (G''(v)f_n )|^2 \ud x\\
& \lesssim 1 +\int_{S}  | \nabla (G''(v)f_n )|^2 \ud x\,.
 \end{aligned}
 \end{equation*}
Using that $|G^{(3)}(v)v_z|\lesssim \frac{1}{1+z^2}$ and applying Lemma \ref{2.1}, we obtain
\begin{equation*}
\begin{aligned}
\int_{S}  | \nabla (G''(v)f_n )|^2 \ud x&\lesssim \int_{S} \left(  G^{(3)}(v) v_z f_n\right)^2\ud x + \int_{S} \left|  G''(v)\nabla f_n\right|^2  \ud x \\
&\lesssim \int_{S} |\nabla f_n|^2\ud x\,.
\end{aligned}
\end{equation*}
This fact,  together with Lemma \ref{Four}, implies
\begin{equation}\label{estimdeltagrad}
\int_{S} |\nabla^3 f_n|^2\ud x = \int_{S} |\nabla  \Delta f_n|^2 \ud x\les 1+ \int_{S} |\nabla f_n|^2\ud x\, .
\end{equation}
Finally, by applying Lemma \ref{Four} to $g=\nabla f_n$, we get
\begin{equation}\label{conto3}
\int_{S} |\nabla^2 f_n|^2 \ud x\les \int_{S} |\nabla f_n|^2\ud x\,.
\end{equation}
By summing \eqref{conto1}, \eqref{estimdeltagrad} and \eqref{conto3}, and recalling \eqref{2hyp}, formula \eqref{somma1} follows.

In light of \eqref{somma1} and \eqref{3hyp}, $f_n$ weakly converges, up to subsequences, to a function $f \in H^3_\loc(S)$ satisfying \eqref{Elf},
\[
\int_{S}  |\nabla f|^2+ |\nabla^2 f|^2 + |\nabla^3 f|^2 \ud x\lesssim 1, \]
and $D_\ell(f)=0$.
It follows that
\begin{equation*}
\nabla(-\Delta f+ G''(v)f)=0,
\end{equation*}
and hence, by Lemma \ref{lemrig}, we get $f=0$. Arguing as in the proof of Lemma \ref{lingap}, it can be proven that   $\|\nabla f_n \|_{L^2} \to 0$, thus obtaining a contradiction.

\end{proof}

\subsection{Proof of the non-linear estimates}
We can now prove Proposition \ref{nonlinear}. We start with the non-linear energy gap estimate \eqref{nonlineargap}.
We recall that $f_c=u-v_c$ with $v_c$ solving \eqref{ELvc} and
\begin{equation}\label{formula:e}
 \E(u)= \int_{S} \frac{1}{2} |\nabla u|^2 +G(u) \ud x-\int_{S} \frac{1}{2} |\nabla v_c|^2 +G(v_c) \ud x\,.
\end{equation}
\begin{lemma}\label{proof1.18}
There exists $\bdelta>0$ such that if $\|f_c\|_{L^\infty}\le \bdelta$, then 
\begin{equation}\label{1.18}
\E\sim\int_{S} |f_c|^2+|\nabla f_c|^2\ud x.
\end{equation}
\end{lemma}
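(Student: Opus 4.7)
The plan is to obtain \eqref{1.18} via a Taylor expansion of $G$ around $v_c$, combined with the linear gap estimate of Lemma \ref{lingap} to absorb the sign-indefinite $G''(v_c)$ term, and finally to control the cubic remainder by $\|f_c\|_{L^\infty}$.

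First I would write $u=v_c+f_c$ in \eqref{formula:e} and expand. Using $|\nabla u|^2=|\nabla v_c|^2+2\nabla v_c\cdot\nabla f_c+|\nabla f_c|^2$ and a third-order Taylor expansion of $G$ around $v_c$, we obtain
\begin{equation*}
\E(u)=\int_S\bigl(\nabla v_c\cdot\nabla f_c+G'(v_c)f_c\bigr)\ud x+\tfrac 1 2 E_\ell(f_c)+R,
\end{equation*}
where $R=\int_S\int_0^1 \tfrac 1 2(1-s)^2 G^{(3)}(v_c+s f_c)f_c^3\,\mathrm{d}s\,\ud x$. Since $v_c$ is a one-dimensional kink solving \eqref{ELv}, namely $-\Delta v_c+G'(v_c)=0$, and since $f_c\in H^1(S)$ (which we may assume, as the right-hand side of \eqref{1.18} would otherwise be infinite), an integration by parts—justified by a standard cut-off argument in the $z$-variable together with the $L^2$-integrability of $\nabla f_c$—shows that the linear term vanishes. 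The remainder is bounded by
\begin{equation*}
|R|\les \|G^{(3)}\|_{L^\infty(I)}\,\|f_c\|_{L^\infty}\int_S f_c^2\ud x\les \bdelta\int_S f_c^2\ud x,
\end{equation*}
where $I$ is an interval containing the image of $v_c+sf_c$, which is bounded since $|v_c|\le 1$ and $\|f_c\|_{L^\infty}\le\bdelta$.

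Next I would combine this identity with Lemma \ref{lingap}. By \eqref{ELvc}, $f_c$ satisfies the orthogonality assumption \eqref{Elf}, so Lemma \ref{lingap} yields $E_\ell(f_c)\ges\int_S f_c^2\ud x$. Since $G''$ is bounded on the range of $v_c$, this in turn gives
\begin{equation*}
\int_S|\nabla f_c|^2\ud x=E_\ell(f_c)-\int_S G''(v_c)f_c^2\ud x\les E_\ell(f_c),
\end{equation*}
so altogether $E_\ell(f_c)\sim\int_S|\nabla f_c|^2+f_c^2\ud x$. For the upper bound in \eqref{1.18}, one then simply uses $\E\le \tfrac 1 2 E_\ell(f_c)+|R|\les\int_S|\nabla f_c|^2+f_c^2\ud x$. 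For the lower bound, one writes $\E\ge\tfrac 1 2 E_\ell(f_c)-C\bdelta\int_S f_c^2\ud x$ and absorbs the negative term into $\tfrac 1 2 E_\ell(f_c)$ using $E_\ell(f_c)\ges\int f_c^2\ud x$, provided $\bdelta$ is chosen small enough.

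The main obstacle in the plan is subtle but manageable: justifying the vanishing of the linear term $\int_S(\nabla v_c\cdot\nabla f_c+G'(v_c)f_c)\ud x$ requires integration by parts in an unbounded strip. Since $\nabla v_c$ and $\Delta v_c$ decay exponentially in $z$ while $f_c$ is merely $H^1$, the boundary terms at $z=\pm\infty$ must be controlled by approximating $f_c$ with compactly supported functions (in $z$) or by truncation, exploiting that $v_{cz},\,v_{czz}\to 0$ exponentially. All other ingredients—Taylor expansion, the $L^\infty$-smallness of $f_c$, and the linear gap estimate—are routine once this is in place.
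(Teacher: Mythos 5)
Your proposal is correct and follows essentially the same route as the paper: write $u=v_c+f_c$, use the Euler--Lagrange equation $-\Delta v_c+G'(v_c)=0$ to kill the linear term, Taylor-expand $G$ around $v_c$ to extract $\tfrac 1 2 E_\ell(f_c)$ plus a cubic remainder bounded by $\|f_c\|_{L^\infty}\int_S f_c^2$, and then invoke Lemma \ref{lingap} (via \eqref{ELvc}) together with the smallness of $\bdelta$ to close the lower bound. The only cosmetic difference is organizational: the paper proves the upper bound with a first-order Taylor remainder and then handles the lower bound with a second-order expansion plus the two separate estimates $\int_S f_c^2\les\E$ and $\int_S|\nabla f_c|^2\les\E$, whereas you streamline this by first recording $E_\ell(f_c)\sim\int_S f_c^2+|\nabla f_c|^2$ and treating both bounds from a single third-order expansion.
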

\begin{proof}
We may assume that $\|f_c\|_{L^\infty}\les 1$. We start by proving that
\begin{equation}\label{1.18ineq}
\E\les\int_{S} |f_c|^2+|\nabla f_c|^2\ud x.
\end{equation}
Indeed, by \eqref{formula:e} and using that
$$
0=\int_{S}(-v_{czz}+G'(v_c))f_c\ud x=\int_{S}(-\Delta v_{c}+G'(v_c))f_c\ud x=\int_{S}\nabla v_c\cdot\nabla f_c+G'(v_c)f_c\ud x\,,
$$
we have
\begin{equation}\label{primopasso}
\E=\int_{S}\frac 1 2|\nabla f_c|^2+G(u)-G(v_c)-G'(v_c)f_c\ud x\,.
\end{equation}

Now since  $\|f_c\|_{L^\infty}\les 1$, by Taylor expansion we have
\[
|G(u)-G(v_c)-G'(v_c)f_c| \lesssim |f_c|^2,
\]
so that
\begin{equation}\label{stima4}
\int_{S}|G(u)-G(v_c)-G'(v_c)f_c|\ud x\les \int_{S}|f_c|^2\ud x\,.
\end{equation}
Combining this with \eqref{primopasso} yields \eqref{1.18ineq}.

In order to prove that 
\begin{equation}\label{1.18ineqopp}
\E\ges\int_{S} |f_c|^2+|\nabla f_c|^2\ud x\, ,
\end{equation}
we first show that 
\begin{equation}\label{secondopasso}
\int_{S} |f_c|^2 \ud x \lesssim \E\, .
\end{equation}
Using again the hypothesis $\|f_c\|_{L^\infty}\les 1$ and a Taylor expansion, we get 
\begin{align*}
 \E&= \frac{1}{2}\int_{S}  |\nabla f_c|^2+ G''(v_c) f_c^2 \ud x +\int_{S} G(u)-G(v_c)-G'(v_c)f_c- \frac{1}{2}  G''(v_c) f_c^2 \ud x\\
 &\ges E_\ell(f_c) -\|f_c\|_{L^\infty} \int_{S} f_c^2 \ud x\, .
\end{align*}
Using \eqref{lingapf} we obtain that \eqref{secondopasso} holds if $\|f_c\|_{L^\infty}$ is small enough.
 
Finally, by \eqref{primopasso},  \eqref{stima4}, \and \eqref{secondopasso}, we immediately have
\begin{equation*}
\begin{aligned}
\int_{S}|\nabla f_c|^2\ud x &\les \E +\int_{S}| G(u)-G(v_c)-G'(v_c)f_c|\ud x \\
&  \les \E + \int_{S} |f_c|^2\ud x\\
&\les \E,
\end{aligned}
\end{equation*}
which proves \eqref{1.18ineqopp} and concludes the proof of the lemma.
\end{proof}
We end this section by proving the non-linear dissipation estimate \eqref{nonlineardissip}. Let us recall that 
\[
 D=\int_{S} |\nabla (\Delta u-G'(u))|^2 \ud x\,.
\]

\begin{lemma}\label{proof1.19}
There exists $\bdelta>0$ such that if $f_c\in H^3(S)$ and $\|f_c\|_{L^\infty}\le \bdelta$, then
\begin{equation}\label{1.19}
D \sim \int_{S} |\nabla f_c|^2+|\nabla^2 f_c|^2+|\nabla^3 f_c|^2 \ud x\,. 
\end{equation}
\end{lemma}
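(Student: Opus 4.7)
The plan is to mirror the proof of Lemma \ref{lindis} while carefully tracking the non-linear error, in exactly the same spirit as Lemma \ref{proof1.18} refined Lemma \ref{lingap}. The starting observation is that since $v_c$ solves $-\Delta v_c+G'(v_c)=0$, a Taylor expansion of $G'$ gives
\[
-\Delta u+G'(u)=-\Delta f_c+G''(v_c)f_c+R,\qquad R:=G'(u)-G'(v_c)-G''(v_c)f_c,
\]
with $|R|\les |f_c|^2$. Differentiating and grouping terms (using that $v_c$ depends only on $z$) yields the sharper pointwise bound
\[
|\nabla R|\les |f_c|^2|v_{cz}|+|f_c||\nabla f_c|.
\]
Using $|v_{cz}|^2\les \frac{1}{1+(z-c)^2}$ together with the Hardy inequality from Lemma \ref{2.1} (which applies thanks to \eqref{ELvc}), the smallness of $\|f_c\|_{L^\infty}$ yields
\[
\int_S |\nabla R|^2\ud x\les \bdelta^2\int_S |\nabla f_c|^2\ud x.
\]

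For the lower bound $D\ges\int(|\nabla f_c|^2+|\nabla^2 f_c|^2+|\nabla^3 f_c|^2)\ud x$, I would take the gradient of the identity in Step~1 and apply the triangle inequality to get $D_\ell(f_c)\les D+\int |\nabla R|^2\ud x$. Invoking Lemma \ref{lindis} then gives $\int|\nabla f_c|^2\ud x\les D+\bdelta^2\int|\nabla f_c|^2\ud x$, and choosing $\bdelta$ small enough absorbs the error to produce $\int|\nabla f_c|^2\ud x\les D$. To upgrade to higher derivatives I would repeat the scheme of \eqref{estimdeltagrad}--\eqref{conto3}: rewrite $\nabla\Delta f_c$ as $-\nabla(-\Delta u+G'(u))+\nabla(G''(v_c)f_c)+\nabla R$, control $\int|\nabla(G''(v_c)f_c)|^2\ud x\les \int|\nabla f_c|^2\ud x$ via Lemma \ref{2.1}, and appeal to Lemma \ref{Four} (which gives $\|\nabla^3 f_c\|_{L^2}=\|\nabla\Delta f_c\|_{L^2}$) to deduce $\int|\nabla^3 f_c|^2\ud x\les D$. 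A further use of the interpolation inequality in Lemma \ref{Four} applied to $g=\nabla f_c$ delivers $\int|\nabla^2 f_c|^2\ud x\les \|\nabla f_c\|_{L^2}\|\nabla^3 f_c\|_{L^2}\les D$.

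The upper bound $D\les\int(|\nabla f_c|^2+|\nabla^2 f_c|^2+|\nabla^3 f_c|^2)\ud x$ is the easier direction: writing $\nabla(\Delta u-G'(u))=\nabla\Delta f_c-\nabla(G'(u)-G'(v_c))$ and noting that $|\nabla(G'(u)-G'(v_c))|\les |v_{cz}||f_c|+|\nabla f_c|$, one more application of Lemma \ref{2.1} yields $\int|\nabla(G'(u)-G'(v_c))|^2\ud x\les \int|\nabla f_c|^2\ud x$, and the bound follows from the identity $\|\nabla\Delta f_c\|_{L^2}=\|\nabla^3 f_c\|_{L^2}$.

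The main obstacle is the quadratic remainder $\nabla R$: the whole argument hinges on pairing the factor $|f_c|^2$ with the decaying weight $|v_{cz}|$, which is what unlocks Lemma \ref{2.1} and lets the prefactor $\bdelta^2$ absorb the leading order term $\int|\nabla f_c|^2\ud x$. Without the orthogonality condition \eqref{ELvc}, Lemma \ref{2.1} would be unavailable and this absorption step would fail, so one should think of \eqref{ELvc} as the structural property that makes the non-linear passage from $D_\ell$ to $D$ possible.
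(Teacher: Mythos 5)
Your proposal matches the paper's proof of Lemma~\ref{proof1.19} essentially step for step: the same Taylor decomposition of $G'(u)$ around $v_c$, the same use of the triangle inequality to pass between $D$ and $D_\ell(f_c)$, the same invocation of Lemma~\ref{lindis} and Lemma~\ref{2.1} to control the quadratic remainder and absorb it via the smallness of $\|f_c\|_{L^\infty}$, and the same bootstrap to $\nabla^3$ and $\nabla^2$ via Lemma~\ref{Four}. The only cosmetic difference is that you name the remainder $R$ and split $G'(u)-G'(v_c)$ as $G''(v_c)f_c+R$ rather than working with the combined expression, which is an equivalent bookkeeping choice.
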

\begin{proof}
We may assume that $\|f_c\|_{L^\infty}\les 1$.
We start by proving that 
 \begin{equation}\label{1.19ineq}
 D\lesssim \int_{S} |\nabla f_c|^2+|\nabla^2 f_c|^2+|\nabla^3 f_c|^2 \ud x\,.
 \end{equation}
 Using the identity
 \begin{equation}\label{iddissipation}
 \nabla(-\Delta u +G'(u))=-\nabla \Delta f_c + \nabla (G'(u)-G'(v_c))\,,
 \end{equation}
by Lemma \ref{Four} and by integration by parts, we have
 \begin{equation}\label{sviluppoD}
 \begin{aligned}
 D&= \int_{S} |\nabla(\Delta u -G'(u))|^2 \ud x\\
& \leq \int_{S} |\nabla \Delta f_c|^2 + |\nabla(G'(u)-G'(v_c))|^2 \ud x\\
& = \int_{S}|\nabla^3 f_c|^2+  |\nabla(G'(u)-G'(v_c))|^2 \ud x\,.
 \end{aligned}
 \end{equation}
By Taylor expansion and by the assumption $\|f_c\|_{L^\infty}\les 1$, we have
 \begin{equation}\label{tay}
 \begin{aligned}
|\nabla(G'(u)-G'(v_c))|^2&=|(G''(u)-G''(v_c))\nabla v_c+G''(u)\nabla f_c|^2\\
&\les |v_{cz}|^2|f_c|^2+|\nabla f_c|^2.
\end{aligned}
 \end{equation}
Using  Lemma \ref{2.1} and the exponential decay of $v_{cz}$, we get 
\begin{equation}\label{stimaG'}
\begin{aligned}
\int_{S}|\nabla(G'(u)-G'(v_c))|^2 \ud x
& \les \int_{S} | v_{cz}|^2 |f_c|^2 + |\nabla f_c|^2\ud x \\
& \les  \int_{S}   |\nabla f_c|^2\ud x\,,
\end{aligned}
\end{equation}
which combined with \eqref{sviluppoD} yields \eqref{1.19ineq}.
%
%

We now prove that
\begin{equation}\label{opposite}
 D \gtrsim \int_{S} |\nabla f_c|^2+|\nabla^2 f_c|^2+|\nabla^3 f_c|^2 \ud x\,.
 \end{equation}
 First of all, we show that 
 \begin{equation}\label{claim}
 D\gtrsim \int_{S} |\nabla f_c|^2\ud x\,.
 \end{equation}
Indeed, by  \eqref{iddissipation} and the triangle inequality, we have
\begin{equation}\label{differenza}
\begin{aligned}
D &= \int_{S} |\nabla(\Delta f_c - G'(u)-G'(v_c)) |^2\ud x  \\
&\ges \int_{S} |\nabla(\Delta f_c-G''(v_c)f_c)|^2\ud x-\int_{S}|\nabla(G'(u)-G'(v_c)-G''(v_c)f_c)|^2\ud x\\
&\ges \int_{S}|\nabla f_c|^2\ud x -\int_{S}|\nabla(G'(u)-G'(v_c)-G''(v_c)f_c)|^2\ud x\,,
\end{aligned}
\end{equation}
where the last inequality is an immediate consequence of Lemma \ref{lindis}.
Moreover,  arguing as in \eqref{tay}, we get
\begin{equation*}
\begin{aligned}
|\nabla(G'(u)-G'(v_c)-G''(v_c)f_c)|^2&=|(G''(u)-G''(v_c)-G^{(3)}(v_c)f_c)\nabla v_c+(G''(u)-G''(v_c))\nabla f_c |^2 \\
&\les |f_c|^4 |v_{cz}|^2 +|f_c|^2 |\nabla f_c|^2\\ 
&\les \|f_c\|_{L^\infty}^2 (|f_c|^2|v_{cz}|^2+|\nabla f_c|^2)\,,
\end{aligned}
\end{equation*}
from which, using again Lemma \ref{2.1} and the exponential decay of $v_{cz}$, we obtain
\begin{equation}\label{mancante}
\begin{aligned}
\int_{S}|\nabla(G'(u)-G'(v_c)-G''(v_c)f_c)|^2 \ud x
& \les  \|f_c\|_{L^\infty}^2 \int_{S} v_{cz}^2 f_c^2+ |\nabla f_c|^2\ud x \\
& \les  \|f_c\|_{L^\infty}^2 \int_{S} |\nabla f_c|^2\ud x\,.
\end{aligned}
\end{equation}
Assuming that $ \|f_c\|_{L^\infty}\le\bdelta$ for some $\bdelta$ sufficiently small, \eqref{opposite} follows by \eqref{differenza} and \eqref{mancante}.
\noindent
Finally we show that \eqref{claim} improves to \eqref{opposite}.
Indeed, by the  definition of $D$, \eqref{iddissipation},\eqref{stimaG'}, and \eqref{claim}, we have
\begin{equation*}
\begin{aligned}
\int_{S} |\nabla \Delta f_c|^2\ud x& \les   \int_{S}|\nabla(-\Delta u +G'(u))|^2\ud x +\int_{S} |\nabla (G'(u)-G'(v_c))|^2\ud x\\
 &\les D+\int_{S}|\nabla f_c|^2\ud x\les D\,.
\end{aligned}
\end{equation*}
Using \eqref{interpol}, this proves  \eqref{opposite}.
\end{proof}
\section{The main relaxation estimate}\label{section:relaxest}
The aim of this section is to prove the following result, which gives the relaxation rates to equilibrium.
\begin{proposition}\label{prop:decay}
 Let $\bdelta$ be given by Proposition \ref{nonlinear} and let $T>0$ be fixed. Then, every smooth solution $u$ of \eqref{CH} in $(0,T)\times S$ such that  $H(u(t))<\infty$,
 $f_{c}(t)\in H^j(S)$ for every $j\ge 0$ and 
 $\|f_{c}(t)\|_{L^\infty}\le \bdelta$ for every $t\in (0,T)$, satisfies
 \begin{eqnarray}\label{stimasuE}
&\E(u(t)) &\le \E_0\,, \\ \label{1intro}
&\E(u(t)) &\les \mathcal{G}_0t^{-1}\,, \\ 
&c^2(t) &\les \mathcal{G}_0^{\frac{1}{2}}\E^{\frac{1}{2}}_0\,, \\ 
\label{2intro}
&c^2(t) &\les \mathcal{G}_0t^{-\frac{1}{2}}\,, \\ \label{stimasuH}
&H(u(t))& \les\mathcal{G}_0\,,\\ \label{stimadissipazione}
&D(u(t)) & \les (\mathcal{G}_0+\mathcal{G}_0^{{2}}+ \E_0\, \mathcal{G}_0^{\frac{6}{6-d'}} t^{-\frac{2d'-6}{6-d'}})t^{-2}\,,
\end{eqnarray}
where $\mathcal{G}_0:= H_0+\E_0+\E_0^7$ and $d':=\max(3,d)$.
\end{proposition}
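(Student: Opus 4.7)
The plan is to follow the Otto--Westdickenberg gradient-flow scheme, now using the nonlinear gap and dissipation estimates of Proposition \ref{nonlinear} in place of their 1D counterparts. The argument combines differential identities along the flow, algebraic interpolations linking $\E, D, H, c$, and a closing ODE step.

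First, since \eqref{CH} is the $\dot H^{-1}$ gradient flow of $E$, one has $\frac{d}{dt}\E(u(t)) = -D(u(t))$, which yields \eqref{stimasuE} and $\int_0^T D\,dt \le \E_0$. Writing $\mu(u) := -\Delta u + G'(u)$ and using $\mu(v_0) = 0$, the gradient-flow identity (computed via the $\dot H^{-1}$-potential $-\Delta\phi = u-v_0$) gives
\[
\tfrac12 \tfrac{d}{dt}H = -\int_S (u-v_0)\bigl[-\Delta(u-v_0) + (G'(u) - G'(v_0))\bigr] dx;
\]
a Taylor expansion plus the linear gap estimate of Lemma \ref{lingap} (after the splitting $u - v_0 = f_c + (v_c - v_0)$ and using \eqref{ELvc}) shows that the right-hand side is essentially $\le -c_1\E$ up to corrections of higher order in $\E_0$ and $c^2$, yielding the uniform bound \eqref{stimasuH}. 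Similarly, differentiating the orthogonality \eqref{ELvc} along the flow and using $u_t = \Delta \mu$ together with the exponential decay of $v_{cz}$ and $\|f_c\|_{L^\infty} \le \bdelta$ produces $|\dot c|^2 \les D$.

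Next, by Proposition \ref{nonlinear}, $\E \sim \|f_c\|_{L^2}^2 + \|\nabla f_c\|_{L^2}^2$ and $D \ges \|\nabla f_c\|_{L^2}^2$. The key algebraic relation $\E^2 \les (H + c^2)\,D$ is obtained by combining the interpolation $\|f_c\|_{L^2}^2 \les \|f_c\|_{\dot H^{-1}}\|\nabla f_c\|_{L^2}$ with a careful decomposition of $\|u-v_0\|_{\dot H^{-1}}$: using the product structure $S = Q\times\R$ to split the $\dot H^{-1}$-potential into its $x'$-average (a one-dimensional function) and a mean-zero remainder (for which Poincar\'e in $x'$ gives better control), and then invoking the mass-conservation identity $\int_S(u-v_0)\,dx = 0$ together with the splitting $u-v_0 = f_c + (v_c - v_0)$, one relates $H$ to $\|f_c\|_{\dot H^{-1}}$ and to $c$. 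The same decomposition yields a second algebraic inequality $c^2 \les H + \E^p$ (for a suitable $p\ge 1$), using the Hardy-type inequality of Lemma \ref{2.1} to control the relevant $L^1$-mass of $f_c$ by $\|\nabla f_c\|_{L^2}$. Combined with \eqref{stimasuH}, these give $\E^2 \les \G_0\,D$.

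From $\dot \E = -D$ and $\E^2 \les \G_0\,D$ we deduce $\dot\E \les -\E^2/\G_0$, whence $\E(t) \les \G_0/t$, i.e.\ \eqref{1intro}. The uniform bound on $c^2$ follows from the algebraic inequality combined with \eqref{stimasuH}. For \eqref{2intro} we use $|\dot c| \les D^{1/2}$: Cauchy--Schwarz on $[t/2, t]$ gives $|c(t)-c(t/2)|^2 \les t\int_{t/2}^t D \les t\,\E(t/2) \les \G_0$, and a dyadic iteration combined with the algebraic inequality produces the $t^{-1/2}$ decay. Finally, \eqref{stimadissipazione} follows from a parallel differential inequality for $D$ along the flow, using \eqref{gagliahd} to absorb the nonlinear terms --- this is where the restriction $d\le 5$ intervenes. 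The main obstacle is expected to be the algebraic step connecting $c^2$ to $H$ and $\E$, because on the strip $v_c-v_0$ fails to lie in $\dot H^{-1}(S)$; the relation must be extracted from a decomposition of the $\dot H^{-1}$-potential compatible with the mass-conservation constraint rather than from a naive triangle inequality. A secondary delicate point is propagating $\|f_c\|_{L^\infty}\le\bdelta$ along the flow so that Proposition \ref{nonlinear} remains applicable, which is handled via Corollary \ref{cor:gaglia}.
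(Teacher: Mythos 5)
Your overall scheme---the dissipation identity $\dot\E=-D$, algebraic interpolation inequalities linking $\E,D,H,c$, and a closing ODE argument---matches the paper's strategy (Lemmas \ref{lemmaintro}, \ref{lemmaintrobis} and \ref{lem:ode}), and you have correctly identified that the main delicacy lies in the algebraic relation tying $c^2$ to $H$ and $\E$ (indeed $v_c-v_0$ and $f_c$ fail to lie in $\dot H^{-1}(S)$ when $c\ne 0$, and only the combination $f_0=f_c+(v_c-v_0)$ has mean zero). However, there are two concrete gaps that prevent the sketch from closing.

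First, the algebraic relation you propose for $c$, namely $c^2\les H+\E^p$, is additive and cannot produce the decay \eqref{2intro}: since $H$ is only bounded, not decaying, the best you could conclude is $c^2\les\G_0$. The relation the argument actually needs (Lemma~\ref{lemmaintro}, following \cite[Lemma~1.3]{OW}) is multiplicative,
\[
c^2\les(H\E)^{1/2}+(|c|+1)\E,
\]
which upon inserting $H\les\G_0$, $\E\les\G_0\,t^{-1}$ and the a-priori bound on $|c|$ gives precisely $c^2\les\G_0\,t^{-1/2}$. Your alternative route via $|\dot c|\les D^{1/2}$ and dyadic telescoping also does not yield decay: Cauchy--Schwarz on each dyadic block gives $|c(t)-c(t/2)|\les\G_0^{1/2}$ \emph{uniformly in $t$}, so the telescope only shows boundedness of $c$, not that $c\to 0$ at any rate.

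Second, the sketch of the $H$-bound is not how the argument runs, and as stated it would require an extra (unproved) step. Along the flow, after the splitting $f_0=f_c+(v_c-v_0)$ and Taylor expansion, one gets a term $-E_\ell(f_c)\le 0$ together with correction terms. The paper \emph{discards} the nonnegative term $E_\ell(f_c)$ (it is not kept as a dissipative $-c_1\E$), and what remains is bounded above by the \emph{positive} quantity $((|c|+1)c^2D)^{1/2}+\E^{3/2-d'/12}D^{d'/12}$ (this is \eqref{stimeutiliintro4b}). The uniform bound $H\les\G_0$ is then obtained in Lemma~\ref{lem:ode} by integrating in the monotone variable $\E$, using $-\tfrac{d}{d\E}=D^{-1}\tfrac{d}{dt}$ and the other algebraic relation $\E\les(HD)^{1/2}+(|c|+1)^2D$ to control $D^{-1}$. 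Your claim that $\tfrac{d}{dt}H\lesssim-c_1\E+\text{(corrections)}$ directly yields the $H$-bound would require showing that $\int_0^t(\text{corrections})$ is dominated by $H_0+c_1\int_0^t\E$, which neither follows trivially (note $\int_0^\infty\E\,dt$ diverges logarithmically for $\E\sim t^{-1}$) nor is needed.

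The remaining pieces of the sketch (the $\E$-decay from $\dot\E=-D$ and $\E^2\les\G_0D$ once the algebraic relations are in place; the $D$-decay via the differential inequality \eqref{stimeutiliintro4c} absorbed through \eqref{gagliahd} when $d\le 5$; and the preservation of $\|f_c\|_{L^\infty}\le\bdelta$ via Corollary~\ref{cor:gaglia}) are consistent in spirit with the paper, though the ODE for $\E$ really needs both regimes of $\E\les(HD)^{1/2}+(|c|+1)^2D$ rather than just $\E^2\les\G_0 D$.
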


\begin{remark}\label{rem:new}
\rm{We note that Proposition \ref{prop:decay} and  Lemma \ref{lemmaintro} below  imply  \eqref{stimeteogenerico}. Indeed, \eqref{nonlineargap} and \eqref{1intro}, yield
\begin{equation}\label{un}
\| u( t)-v_{c{(t)}} \|_{H^1} =\|f_{c}(t)\|_{H^1}\sim \E^{\frac 1 2}(u(t))\lesssim  \mathcal{G}_0^{\frac{1}{2}} t^{-\frac{1}{2}}\,,
\end{equation}
which is exactly the first estimate in \eqref{stimeteogenerico}.

Moreover, by \eqref{un} and \eqref{2intro}, we get
\begin{equation*}
\begin{aligned}
\|u(t)-v_{0}\|_{H^1} & \leq \| u(t)-v_{c(t)} \|_{H^1} + \|v_{c{(t)}}-v_0 \|_{H^1} \\
& \les\mathcal{G}_0^{\frac{1}{2}} t^{-\frac{1}{2}}+ |c{(t)}| \\
& \les\mathcal{G}_0^{\frac{1}{2}} t^{-\frac{1}{4}}\,,
\end{aligned}
\end{equation*}
which coincides with the second estimate in \eqref{stimeteogenerico}.
}
\end{remark}

As in \cite[Theorem 1.2]{OW}, the proof of Proposition \ref{prop:decay} is based on the combination of  algebraic and differential relations between $\E$, $D$, $H$ and $c$ together with an ODE argument.\\

First, arguing almost verbatim as in \cite{OW}, and using Proposition \ref{nonlinear} and Lemma \ref{2.1}  the following result  can be proven (we refer the reader to \cite[Lemma 1.3]{OW} for the proof).
\begin{lemma}\label{lemmaintro}
Let  $\bar\delta$ be given by Proposition \ref{nonlinear} and let $u\in v_c+H^3(S)$ be such that $H(u)<\infty$ and $\|f_c\|_{L^\infty}\le \bar\delta$,  then
\begin{eqnarray}\label{stimeutiliintro0}
c^2 &\les &(H(u) \E(u))^{1/2}+(|c|+1) \E(u), \\ \label{stimeutiliintro3}
\E(u) &\les &(H(u)D(u))^{1/2} + (|c|+1)^2 D(u).
\end{eqnarray}
\end{lemma}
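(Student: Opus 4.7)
My plan is to follow the one-dimensional argument of \cite[Lemma 1.3]{OW} very closely, invoking Proposition \ref{nonlinear} and Lemma \ref{2.1} as drop-in replacements for the corresponding one-dimensional tools.

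For the first inequality, the idea is to detect $c$ by testing the identity $u-v_0=\Div F$ (where $F\in L^2(S)$ is chosen with $\|F\|_{L^2}^2$ arbitrarily close to $H$) against a well-chosen function depending only on $z$, typically $v_{0z}$. On one hand, integration by parts and Cauchy--Schwarz give
\[
\lt|\int_S (u-v_0)v_{0z}\ud x\rt|\les H^{1/2}\|\nabla v_{0z}\|_{L^2}\les H^{1/2}.
\]
On the other hand, decomposing $u-v_0=f_c+(v_c-v_0)$ produces two contributions: the Euler--Lagrange orthogonality \eqref{ELvc} allows us to rewrite $\int_S f_c v_{0z}\ud x$ as $\int_S f_c(v_{0z}-v_{cz})\ud x$, which by Proposition \ref{nonlinear} together with $\|v_{0z}-v_{cz}\|_{L^2}\les \min(|c|,1)$ is bounded by $\min(|c|,1)\cdot \E^{1/2}$; and $\int_S (v_c-v_0)v_{0z}\ud x$ is an explicit function of $c$ alone, linear in $c$ at $c=0$ with non-vanishing leading coefficient. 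Balancing these two bounds, squaring, and separating the regimes $|c|\le 1$ and $|c|>1$ yields $c^2\les (H\E)^{1/2}+(|c|+1)\E$.

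For the second inequality, we start from $\E\sim\|f_c\|_{L^2}^2+\|\nabla f_c\|_{L^2}^2$ (Proposition \ref{nonlinear}) and observe that $\|\nabla f_c\|_{L^2}^2\les D$ follows directly from \eqref{nonlineardissip}. For the $L^2$ part we write
\[
\|f_c\|_{L^2}^2=\int_S f_c(u-v_0)\ud x+\int_S f_c(v_0-v_c)\ud x;
\]
the first term equals $-\int_S \nabla f_c\cdot F\ud x$ after integration by parts, hence is bounded by $\|\nabla f_c\|_{L^2}H^{1/2}\les (HD)^{1/2}$. The second term is the more delicate one: using the orthogonality $\int_S f_c v_{cz}\ud x=0$, one subtracts $c\int_S f_c v_{cz}\ud x=0$ and recognises that the integrand involves only the Taylor remainder $R:=v_0-v_c-c v_{cz}$. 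Pairing a weighted Cauchy--Schwarz with Lemma \ref{2.1} in the form $\int_S \frac{f_c^2}{z^2+1}\ud x\les \|\nabla f_c\|_{L^2}^2\les D$, together with a case-by-case estimate of $\int_S(z^2+1)R^2\ud x$ (of order $c^4$ for small $|c|$ and of order $c^2$ for large $|c|$, by exploiting the exponential localisation of $v_{0z}$ and the fact that $v_0-v_c$ is essentially supported on a fixed-size neighbourhood of $\{z=0\}\cup\{z=c\}$), produces after Young's inequality a contribution absorbed by $(|c|+1)^2 D$.

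The main obstacle is the case separation between small and large $|c|$ in the Taylor expansion $v_c-v_0=-cv_{0z}+O(c^2)$, which is accurate only for small $c$; for large $|c|$ the expansion must be replaced by direct pointwise bounds and support considerations. No essentially new difficulty arises in the strip setting compared with the one-dimensional setting of \cite{OW}, because all the test functions used depend only on $z$ and the transverse integration merely produces a factor $|Q|=1$; the one-dimensional Hardy inequality of \cite{OW} is throughout replaced by Lemma \ref{2.1}.
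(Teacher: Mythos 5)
Your overall strategy (follow \cite[Lemma 1.3]{OW}, replacing the one-dimensional ingredients by Proposition \ref{nonlinear} and Lemma \ref{2.1}) coincides with what the paper does, since the paper itself defers entirely to \cite{OW} for the details. However, several steps in your sketch do not actually close, and at least one numerical claim is wrong.

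For \eqref{stimeutiliintro0}, the chain of bounds you write, namely $\min(|c|,1)\les H^{1/2}+\min(|c|,1)\E^{1/2}$, cannot produce the right-hand side $(H\E)^{1/2}+(|c|+1)\E$. After squaring it yields at best $\min(c^2,1)\les H+\min(c^2,1)\E+(H\E)^{1/2}$: the lone $H$ on the right is strictly weaker than $(H\E)^{1/2}$ when $\E\ll H$, and the left-hand side is $\min(c^2,1)$ rather than $c^2$, so for $|c|>1$ the pairing against $v_{0z}$ gives \emph{no} control on $c$ at all. The missing ingredient is the interpolation $\|f_0\|_{L^2}^2\les H(u)^{1/2}\|\nabla f_0\|_{L^2}$ (as in the proof of Lemma \ref{lemH1}), which is what couples $H$ with a gradient (hence $\E$-type) quantity and creates the cross term; a pairing $\big|\int_S(u-v_0)v_{0z}\big|\les H^{1/2}\|\nabla v_{0z}\|_{L^2}$ produces $H^{1/2}$ with no accompanying $\E$ factor and cannot do this. "Balancing and squaring" is not a substitute for this step.

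For \eqref{stimeutiliintro3}, two technical claims are incorrect. First, Lemma \ref{2.1} controls $\int_S \frac{f_c^2}{(z-c)^2+1}\ud x$, not $\int_S\frac{f_c^2}{z^2+1}\ud x$; converting to the latter costs a factor $(|c|+1)^2$. Second, with $R=v_0-v_c-cv_{cz}$ and the weight $z^2+1$, one has $\int_S(z^2+1)R^2\ud x\sim c^4$ for \emph{all} $c$ (the $cv_{cz}$ part alone contributes $c^2\cdot z^2\sim c^4$ near $z=c$), and $\int_S((z-c)^2+1)(v_0-v_c)^2\ud x\sim(|c|+1)c^2$, not $c^2$; also $v_0-v_c$ is not supported on a fixed-size set but is $\approx 2$ on the full interval $(0,c)$, whose length is $\sim|c|$. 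As a result, the Young step you invoke leaves a leftover of order $c^2$ or $c^4$, whose absorption you would then need \eqref{stimeutiliintro0} for, making the argument circular as written. To make this rigorous one should, as in \cite{OW}, keep $v_0-v_c$ unsubtracted, use the weight $(z-c)^2+1$ directly together with the bound $\int_S((z-c)^2+1)(v_0-v_c)^2\ud x\les(|c|+1)c^2$ (which is also the bound used in the proof of Lemma \ref{lemmaintrobis} in this paper), and close the resulting Young term through the already-established algebraic relation rather than by the wrong $c^2$ claim.
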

We remark that the assumptions of Lemma \ref{lemmaintro}  imply that $\E(u)<\infty$.\\


We now turn to the differential relations. The proof follows  the lines of the proof of \cite[Lemma 1.4]{OW} with some additional difficulties coming from the transversal directions.
\begin{lemma}\label{lemmaintrobis}
Let $\bar\delta$ be given by Proposition \ref{nonlinear} and let $T>0$ be fixed. Let  $u=u(t,x)$ be a smooth  solution of \eqref{CH} in $S\times (0,T)$ with $H(u(t)),\E(u(t))<\infty$ and $u(t)\in v_{c(t)}+H^3(S)$ for every $t\in [0,T)$\,. If   $\|f_{c}(t)\|_{L^\infty}\le \bdelta$ for every $t\in [0,T]$, then $u$ satisfies 
\begin{eqnarray}\label{stimeutiliintro4a}
\frac{\ud\E}{\ud t}&=& -D,\\  \label{stimeutiliintro4b}
\frac{\ud H}{\ud t} &\les&  ((|c|+1)c^2 \, D)^{\frac{1}{2}}+\E^{\frac{3}{2}-\frac{d'}{12}} \, D^{\frac{d'}{12}}, \\  \label{stimeutiliintro4c}
\frac{\ud D}{\ud t} &\les&  D^{\frac{3}{2}} +\E^{1-\frac{d'}{6}} D^{1+\frac{d'}{6}}, 
\end{eqnarray}
where $d':=\max(3,d)$.
\end{lemma}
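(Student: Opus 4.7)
The plan is to derive the three identities/inequalities by direct computation on the PDE, combined with the nonlinear estimates of Section \ref{section:endissest} (Proposition \ref{nonlinear} and Corollary \ref{cor:gaglia}), the Hardy-type bound of Lemma \ref{2.1}, and the elliptic interpolation of Lemma \ref{Four}. Throughout I set $w:=\Delta u-G'(u)$, so that $u_t=-\Delta w$ and $D=\|\nabla w\|_{L^2(S)}^2$; using the Euler-Lagrange equation $-\Delta v_c+G'(v_c)=0$ one may also write $w=\Delta f_c-[G'(u)-G'(v_c)]$. For \eqref{stimeutiliintro4a}, testing the Cahn-Hilliard equation against $G'(u)-\Delta u$ and integrating by parts twice gives the standard gradient-flow identity $\frac{\ud\E}{\ud t}=-D$.

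For \eqref{stimeutiliintro4b}, I let $\phi$ solve $-\Delta\phi=u-v_0$ with decay at infinity, so that $H=\|\nabla\phi\|_{L^2}^2$. A short computation using $\phi_t=w$ (modulo additive constants, which vanish since $w\to 0$) gives $\frac{\ud H}{\ud t}=2\int_S w(u-v_0)\ud x$. I split $u-v_0=f_c+(v_c-v_0)$ and treat the two pieces separately. For $\int_S wf_c\ud x$, the exact Taylor expansion $G'(u)-G'(v_c)=G''(v_c)f_c+3v_cf_c^2+f_c^3$ (since $G$ is a quartic) together with integration by parts give $\int_S wf_c\ud x=-E_\ell(f_c)-\int_S(3v_cf_c^3+f_c^4)\ud x$, which is $\leq C\|f_c\|_{L^\infty}\E$ because $E_\ell(f_c)\geq 0$ by Lemma \ref{pos} and $\|f_c\|_{L^2}^2\lesssim\E$ by Lemma \ref{proof1.18}; Corollary \ref{cor:gaglia} then bounds this by the second term $\E^{3/2-d'/12}D^{d'/12}$. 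For $\int_S w(v_c-v_0)\ud x$, since $v_c-v_0$ depends only on $z$, passing to the $x'$-average $\bar w(z):=\int_Q w(x',z)\ud x'$ (which satisfies $\|\partial_z\bar w\|_{L^2(\R)}^2\leq D$) reduces the estimate to a one-dimensional integral, and I proceed as in \cite[Lemma 1.4]{OW}: because $\int_\R(v_c-v_0)\ud z=-2c\neq 0$, I introduce a fixed cutoff $\chi$ to write $v_c-v_0=\partial_z V^\ast+2c\chi'$ with $V^\ast\in L^2(\R)$ of norm $\lesssim|c|(|c|+1)^{1/2}$, so that integration by parts and Cauchy-Schwarz produce the first term $((|c|+1)c^2D)^{1/2}$.

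For \eqref{stimeutiliintro4c}, differentiating $D=\|\nabla w\|_{L^2}^2$ in time and using $w_t=\Delta u_t-G''(u)u_t=-\Delta^2 w+G''(u)\Delta w$, two integrations by parts yield
\[\frac{\ud D}{\ud t}=-2\int_S|\nabla\Delta w|^2\ud x-2\int_S G''(u)(\Delta w)^2\ud x.\]
Splitting $G''(u)=G''(v_c)+[G''(u)-G''(v_c)]$ and grouping the $v_c$-contribution with $-2\int_S|\nabla\Delta w|^2\ud x$ produces $-2E_\ell(\Delta w)\leq 0$ (Lemma \ref{pos}). The remaining cross term is controlled by $C\|f_c\|_{L^\infty}\|\Delta w\|_{L^2}^2$, which via Lemma \ref{Four} and Young's inequality (absorbing a multiple of $\|\nabla\Delta w\|_{L^2}^2$) reduces to $C\|f_c\|_{L^\infty}^2 D$. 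Corollary \ref{cor:gaglia}, applied in two regimes — using either the \textit{a priori} bound $\|f_c\|_{L^\infty}\leq\bdelta$ coupled with the nonlinear dissipation estimate \eqref{nonlineardissip}, or the full Gagliardo-Nirenberg interpolation — yields the two contributions $D^{3/2}$ and $\E^{1-d'/6}D^{1+d'/6}$ respectively.

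The main technical obstacle is the treatment of $\int_S w(v_c-v_0)\ud x$ in the $H$-evolution: since $v_c-v_0$ carries nonzero total $z$-mass and therefore admits no $L^2$-antiderivative, the cutoff must be calibrated precisely to recover the $|c|(|c|+1)^{1/2}$ scaling. The higher-dimensional geometry does not add a new obstruction here, since averaging in $x'$ reduces the issue to the one-dimensional argument of \cite{OW}.
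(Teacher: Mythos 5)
Your derivation of \eqref{stimeutiliintro4a} and your treatment of the $\int_S w f_c$ piece of \eqref{stimeutiliintro4b} are correct and match the paper (the paper computes $\tfrac{1}{2}\tfrac{\ud H}{\ud t}$ via the optimal vector field $F_0$ rather than the potential $\phi$, but this is cosmetic). However, there are two genuine gaps.

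For the $(v_c-v_0)$ piece of \eqref{stimeutiliintro4b}, your cutoff decomposition $v_c-v_0=\partial_z V^\ast+2c\,\chi'$ with $\|V^\ast\|_{L^2}\lesssim |c|(|c|+1)^{1/2}$ handles the $V^\ast$ part, but leaves the residual $2c\int_{\R}\bar w\,\chi'\ud z$ unaddressed. This term involves $\bar w$ itself, not $\partial_z\bar w$, and $D$ only controls $\|\nabla w\|_{L^2}$; the best one can say is $\|\bar w\|_{L^2}\lesssim D^{1/2}+\E^{1/2}$ (via $w=\Delta f_c-(G'(u)-G'(v_c))$ and \eqref{nonlineardissip}), which produces terms like $|c|\,\E^{1/4}D^{1/4}$ that do not fit under $((|c|+1)c^2 D)^{1/2}+\E^{3/2-d'/12}D^{d'/12}$. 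The paper avoids the cutoff altogether by splitting $w=\Delta f_c-(G'(u)-G'(v_c))$: the $\Delta f_c$ part integrates by parts directly against $v_0-v_c$ and is controlled by $\|(v_0-v_c)_z\|_{L^2}\|\nabla f_c\|_{L^2}$, while the nonlinear part is handled by Taylor expansion, weighted Cauchy--Schwarz, and Lemma~\ref{2.1}; no antiderivative of $v_c-v_0$ is ever needed.

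For \eqref{stimeutiliintro4c} the approach is structurally wrong. After grouping the $G''(v_c)$-term with $-2\|\nabla\Delta w\|^2$ into $-2E_\ell(\Delta w)\le 0$, there is no leftover negative multiple of $\|\nabla\Delta w\|^2$ against which to absorb via Young, contradicting your sentence ``absorbing a multiple of $\|\nabla\Delta w\|^2$''. If instead one keeps $-2\|\nabla\Delta w\|^2$ free and uses only $G''(v_c)\ge -1$ together with Lemma~\ref{Four} and Young, one lands at $\tfrac{\ud D}{\ud t}\lesssim D$, which is far too weak (it is not integrable against $t^{-1}$ and would not yield the decay in Proposition~\ref{prop:decay}). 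The paper obtains the crucial superlinear right-hand side by (i) the orthogonal decomposition $h=\Delta g=h_0+\alpha v_{cz}$ with $\int_S h_0 v_{cz}=0$ and $|\alpha|\lesssim D^{1/2}$, (ii) the cancellations \eqref{forsetolgo1}--\eqref{forsetolgo2} that reduce $I=\int|\nabla h|^2+G''(v_c)h^2$ to $\int|\nabla h_0|^2+G''(v_c)h_0^2$, and (iii) the \emph{quantitative} linearized gap estimate of Lemma~\ref{lingap} (not merely the nonnegativity of Lemma~\ref{pos}) to conclude $I\gtrsim\int|\nabla h_0|^2+h_0^2$; only then can the cross term, split into its $h_0$ and $\alpha v_{cz}$ contributions, be absorbed, producing $D^{3/2}$ and $\E^{1-d'/6}D^{1+d'/6}$. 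Your proposal omits this entire mechanism.
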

\begin{remark}
\rm{
 Notice that if $\E\les 1$ and $d\le 3$, \eqref{stimeutiliintro4b} and \eqref{stimeutiliintro4c} are exactly the estimates of \cite[Lemma 1.4]{OW}. Moreover, if we still assume that $\E\les 1$ but that $d=4,5$, we expect that  $D\ll 1$ after a time of order one, and hence   \eqref{stimeutiliintro4b} and \eqref{stimeutiliintro4c} give very similar bound to the ones in \cite[Lemma 1.4]{OW}.
}
\end{remark}

\begin{proof}[Proof of Lemma \ref{lemmaintrobis}]

 Identity \eqref{stimeutiliintro4a} is a direct consequence of the definitions of $\E$ and $D$ and of  the fact that $\nabla u\in H^2(S)$.\\
 
 {\it Step 1:} Proof of \eqref{stimeutiliintro4b}.\\
 By  definition of $f_0$ and $f_c$, we have
\begin{equation}\label{eqf0}
 \partial_t f_{0}-\Delta \lt(G'(v_c+f_c)-G'(v_c)-\Delta f_c\rt)=0\,.
\end{equation}
We first show that \eqref{eqf0} implies
\begin{equation}\label{eqF}
 \partial_t F_{0}-\nabla\lt(G'(v_c+f_c)-G'(v_c)-\Delta f_c\rt)=0\,,
\end{equation}
where $F_0\in L^2(S)$ is a solution of the minimum problem in \eqref{distance}, i.e., 
\begin{equation}\label{Fzero2}
\Div F_0=f_0\qquad\textrm{and}\qquad H=\int_{S}F_0^2\ud x\,.
\end{equation}
Notice that \eqref{eqF} in particular shows that under the assumptions of the lemma, $H$ is differentiable in time.\\
Let $F\in L^2(S)$ with $\Div F=f_0$. Setting $\xi=(\xi',\xi_z)\in\Z^{d-1}\times\R$, and writing $F$ and $f_0$ in Fourier space, we get 
\[
F(x)=\sum_{\xi' \in \Z^{d-1}} \int_{\R} \hat{F}(\xi',\xi_z) e^{2 i \pi\xi\cdot x} \ud \xi_z \quad \mbox{and }f_0(x)=\sum_{\xi'\in\Z^{d-1}}\int_{\R} \hat{f}_0(\xi',\xi_z) e^{2 i \pi\xi\cdot x} \ud \xi_z\,;
\]
therefore,  the constraint $\Div F=f_0$ can be rewritten as $ 2i \pi \hat{F}\cdot \xi=\hat{f}_0$, which,  together with Plancherel Theorem, implies
\begin{equation}\label{h-1norm}
 \|f_0\|_{H^{-1}}^2=\min_{\newatop{\hat F\in L^2}{ 2i \pi \hat{F}\cdot \xi=\hat{f}_0}} \sum_{\xi'\in \Z^{d-1}}\int_{\R} |\hat{F}(\xi)|^2\ud\xi_z\,.
\end{equation}
Now,  if $\hat F_0$ is a minimizer of \eqref{h-1norm},  by Pythagoras  for every $\xi\neq0$, $\hat{F}_0(\xi)\in \R \xi$ and hence
\begin{equation}\label{minimofourier}
\hat{F}_0(\xi)=\frac{\hat{f}_0(\xi)}{2 i \pi} \frac{\xi}{|\xi|^2}\,.
\end{equation}
Set $g:= G'(v_c+f_c)-G'(v_c)-\Delta f_c$,  by \eqref{eqf0} we have
\[
\partial_t\hat{F}_0(\xi)= \frac{\partial_t \hat{f}_{0}(\xi)}{2 i\pi} \frac{\xi}{|\xi|^2}= 2 i \pi \hat{g}(\xi) \xi\,,
\]
whence \eqref{eqF} follows by taking  the Fourier inverse. 
 By \eqref{eqF} and integrating by parts, we have 
\begin{equation*}
\begin{aligned}
\frac{1}{2}\frac{\ud}{\ud t} \int_{S} F_0^2 \ud x &= \int_{S} F_0\, \partial_t F_{0} \ud x \\
&=\int_{S} F_0\cdot  \nabla\lt(G'(v_c+f_c)-G'(v_c)-\Delta f_c\rt) \ud x \\
&\stackrel{\Div F= f_0}{=}-\int_{S} f_0   \lt(G'(v_c+f_c)-G'(v_c)-\Delta f_c\rt) \ud x \\
&=-\int_{S} (f_c+v_c-v_0)\lt(G'(v_c+f_c)-G'(v_c)-\Delta f_c\rt) \ud x\\
&\le-\int_{S}|\nabla f_c|^2+G''(v_c)f_c^2 \ud x\\
&\quad -\int_{S}\lt(G'(v_c+f_c)-G'(v_c)-G''(v_c) f_c\rt) f_c \ud x \\
& \quad +\int_{S}(v_0-v_c)\lt(G'(v_c+f_c)-G'(v_c)\rt) \ud x + \int_{S} |(v_0-v_c)_z|\,|\nabla f_c| \ud x\\
&\le -\int_{S}\lt(G'(v_c+f_c)-G'(v_c)-G''(v_c) f_c\rt) f_c \ud x \\
& \quad +\int_{S}(v_0-v_c)\lt(G'(v_c+f_c)-G'(v_c)\rt) \ud x + \int_{S} |(v_0-v_c)_z| |\nabla f_c| \ud x\\
&=: A+B+C.
\end{aligned}
\end{equation*}
where the last inequality follows directly by the positivity of the linearized energy gap proved in Lemma \ref{lingap}.

We first estimate $A$. Using Taylor expansion, boundedness of $\|f_c\|_{L^\infty}$, and  \eqref{gagliahd},  
we have
\begin{equation}\label{A}
A\les\int_{S}|f_c|^3\ud x\le\|f_c\|_{L^\infty}\int_{S}|f_c|^2\ud x\les\E^{\frac 3 2-\frac{d'}{12}}D^{\frac{d'}{12}}.
\end{equation}

The estimates of $B$ and $C$ are obtained exactly as in \cite{OW}. For the reader's convenience, let us give the proofs. Concerning $B$, using again Taylor expansion, the boundedness of $\|f_c\|_{L^\infty}$, and
by  Cauchy-Schwarz inequality, we obtain
\[
 B\les \int_{S} |v_0-v_c| |f_c| \ud x\les \lt(\int_{S} ((z-c)^2+1)(v-v_c)^2 \ud x\int_{S} \frac{1}{(z-c)^2+1} f_c^2 \ud x\rt)^{\frac 1 2}.
\]
It is not hard to see that (see \cite[formula (2.12)]{OW})
\[
 \int_{S} ((z-c)^2+1)(v-v_c)^2 \ud x\les (|c|+1)c^2
\]
so that applying Lemma \ref{2.1} and \eqref{nonlineardissip}, we obtain
\begin{equation}\label{B}  
B\les ((|c|+1)c^2\,D)^{\frac 1 2}\,.
\end{equation} 
Finally, for $C$, by Cauchy-Schwarz inequality and  \eqref{nonlineardissip}, we get
\begin{equation*}
C\leq\left(\int_{S}|v_{0z}-v_{cz}|^2\ud x\,\int_{S}|\nabla f_c|^2\ud x\right)^{\frac{1}{2}}\les \left(\int_{S}|v_{0z}-v_{cz}|^2\ud x\, D\right)^{\frac 1 2}\,,
\end{equation*}
which combined with
\begin{equation*}
\int_{S}|v_{0z}-v_{cz}|^2\ud x\le\min\left\{ 4\int_{S}|v_{0z}|^2\ud x,|c|^2\int_{S}|v_{0zz}|^2\ud x\right\}\les\min\{c^2,1\}\les (|c|+1)c^2,
\end{equation*}
yields
\begin{equation}\label{C}
C\les ( (|c|+1)c^2\,D)^{\frac 1 2}.
\end{equation}
In conclusion,  summing \eqref{A}, \eqref{B} and \eqref{C}, we obtain \eqref{stimeutiliintro4b}.\\
\medskip

{\it Step 2:} Proof of \eqref{stimeutiliintro4c}\,.\\
We set
\begin{equation}\label{defofg}
g:= \Delta u-G'(u),
\end{equation}
so that $D=\int_{S}|\nabla g|^2\ud x$ and the Cahn-Hilliard equation \eqref{CH} can be rewritten as 
\begin{equation}\label{rewCHC}
u_t =-\Delta g\,.
\end{equation}
Moreover, set
\begin{equation}\label{defofh}
h:= \Delta g= - u_t\,.
\end{equation}
By differentiating, integrating by parts and using \eqref{rewCHC}, we have
\begin{eqnarray}\nonumber
 \frac{1}{2}\frac{\ud D}{\ud t} &=& \int_{S} \nabla (-\Delta u+G'(u)) \cdot \nabla(-\Delta u_t+G''(u)u_t) \ud x \\
\nonumber&=& \int_{S} \Delta g (-\Delta +G''(u))  u_t \ud x \\
\nonumber&=& \int_{S} \Delta g \Delta^2 g \ud x -\int_{S} |\Delta g|^2 G''(u) \ud x \\
\nonumber&=& -\int_{S} |\nabla \Delta g|^2 \ud x -\int_{S} |\Delta g|^2 G''(u) \ud x \\
\nonumber&=& -\int_{S} |\nabla h|^2 +G''(v_c) h^2\ud x - \int_{S} (G''(u)-G''(v_c)) h^2 \ud x\\ \label{dopo2.22}
&=&-I-\textit{II}\,.
\end{eqnarray}
We now decompose $h$ as 
\begin{equation}\label{decomph}
h=h_0+\alpha v_{cz}\,,
\end{equation} 
where
\begin{equation}\label{2.23}
\alpha:= \frac{\displaystyle \int_{S} h \, v_{cz} \ud x}{\displaystyle \int_{S} v^2_{cz} \ud x}\,,
\end{equation}
implying $\int_{S} h_0 v_{cz}\ud x =0$. For further use, we notice that 
\begin{equation}\label{futuro}
|\alpha| \leq \displaystyle \frac{ \displaystyle \int_{S}|\nabla g| v_{czz}\ud x}{\displaystyle \int_{S} v_{cz}^2\ud x} \leq \frac{\displaystyle \left(\int_{S}|\nabla g|^2 \ud x\right)^{\frac{1}{2}}\left(\int_{S} v^2_{czz}\ud x\right)^{\frac{1}{2}}}{\displaystyle \int_{S} v_{cz}^2\ud x} \lesssim D^{\frac{1}{2}}\,.
 \end{equation}
Moreover, by differentiating the first equation in \eqref{ELv} we obtain that $G''(v_c)v_{cz}= v_{czzz}$, whence, integrating by parts we deduce
\begin{equation}\label{forsetolgo1}
\int_{S}v^2_{czz}+G''(v_c)v^2_{cz}\ud x=\int_{\R}v^2_{czz}+G''(v_c)v^2_{cz}\ud z=0\,.
\end{equation}
Furthermore by integration by parts again,
\begin{equation}\label{forsetolgo2}
\int_{S}\nabla h_0\cdot \nabla v_{cz}+G''(v_c)\,h_0\,v_{cz}\ud x=0\,.
\end{equation}
Let us estimate $I$ in \eqref{dopo2.22}. As a consequence of \eqref{forsetolgo1} and \eqref{forsetolgo2},  by applying Lemma \ref{lingap}  to $h_0$ and using that $G''(v_c)\ge -1$, we obtain 
\begin{equation}\label{2.24}
\begin{aligned}
I
&= \int_{S} |\nabla h_0|^2 +G''(v_c) h_0^2\ud x \\
&= (1-\beta) \lt[\int_{S} |\nabla h_0|^2 +G''(v_c) h_0^2\ud x \rt]+ \beta \lt[\int_{S} |\nabla h_0|^2 +G''(v_c) h_0^2\ud x\rt]\\
&\geq C (1-\beta)\lt[\int_{S} h_0^2 \rt]+ \beta\lt[\int_{S}|\nabla h_0|^2 - h_0^2\ud x\rt] \\
&\ges \int_{S} |\nabla h_0|^2 + h_0^2\ud x, 
\end{aligned}
\end{equation}
where $\beta>0$ is chosen such that $C(1-\beta)-\beta>0$.
\vskip0.2cm
We now turn our attention to the term $II$ of \eqref{dopo2.22}. By  boundedness of $f_c$, the decomposition of $h$ in \eqref{decomph}, and Young inequality, we get 
\begin{equation}\label{somma}
\begin{aligned}
|\textit{II}|&\le \int_{S} |G''(u)-G''(v_c)| h^2 \ud x\\
  &\les \int_{S} |f_c| h^2 \ud x \\
& \les \int_{S} |f_c|\, h_0^2 \ud x + \alpha^2 \int_{S}   |f_c|\, v_{cz}^2 \ud x\, \\
&=: A+B\,.
\end{aligned}
\end{equation}
In order to bound $A$, we use \eqref{gagliahd}, which yields 
\begin{equation}\label{primastimaA}
\begin{aligned}
A \leq \|f_c\|_{L^\infty} \int_{S} h_0^2 \ud x \les \E^{\frac 1 2-\frac {d'}{12}} D^{\frac{d'}{12}}\int_{S} h_0^2 \ud x\,.
\end{aligned}
\end{equation}
Moreover, by using - in order of appearance - \eqref{decomph}, \eqref{defofh},  Cauchy-Schwarz inequality, and \eqref{futuro}, we obtain
\begin{equation*}
\begin{aligned}
\int_{S} h_0^2 \ud x &= \int_{S} h_0 (h-\alpha v_{cz}) \ud x \\
& \leq \left|\int_{S}h_0\, \Delta g \ud x\right|+ |\alpha|\left| \int_{S}h_0 v_{cz} \ud x\right| \\
& \leq \left|\int_{S}\nabla h_0\cdot \nabla g \ud x\right| + |\alpha| \left( \int_{S} h_0^2\ud x \int_{S} v_{cz}^2\ud x\right)^{\frac{1}{2}} \\
&\les \left( \int_{S} |\nabla h_0|^2 \ud x\int_{S} |\nabla g|^2\ud x\right)^{\frac{1}{2}} + |\alpha|\left( \int_{S} h_0^2\ud x\right)^{\frac{1}{2}}  \\
&\les D^{\frac{1}{2}} \left(\int_{S} |\nabla h_0|^2 + h_0^2\ud x\right)^{\frac{1}{2}},
\end{aligned}
\end{equation*}
which, together with \eqref{primastimaA}, implies
 \begin{equation}\label{stimafinA}
  A \les \E^{\frac 1 2-\frac{d'}{12}} D^{\frac 1 2+\frac{d'}{12}} \left(\int_{S} |\nabla h_0|^2 + h_0^2\ud x\right)^{\frac{1}{2}}.
  \end{equation}
As for the term $B$, by using -- in order of appearance -- Cauchy-Schwarz inequality, the exponential decay of $v_{cz}$,  Lemma \ref{2.1}, and  \eqref{nonlineardissip}, we obtain
\begin{equation}\label{stimafinB}
\begin{aligned}
B& \leq \alpha^2\left( \int_{S} f_c^2v_{cz}^2\ud x\,\int_{S} v_{cz}^2\ud x\right) ^{\frac{1}{2}} \\
& \les\alpha^2 \left( \int_{S} \frac{f_c^2}{(z-c)^2+1}\ud x\right)^{\frac{1}{2}} \\
& \les \alpha^2 \left( \int_{S} |\nabla f_c|^2 \ud x\right)^{\frac{1}{2}} \\
& \les D \, D^{\frac{1}{2}}= D^{\frac{3}{2}}\,.
\end{aligned}
\end{equation}  
Thus, by \eqref{somma}, \eqref{stimafinA}, and \eqref{stimafinB}, we end up with
  \begin{equation}\label{2.25}
|\textit{II}|\les \E^{\frac 1 2-\frac{d'}{12}} D^{\frac 1 2+\frac{d'}{12}}\left( \int_{S}|\nabla h_0|^2 + h_0^2\ud x\right)^{\frac{1}{2}}+D^{\frac{3}{2}}.
  \end{equation}
Finally, by \eqref{dopo2.22}, \eqref{2.24}, \eqref{2.25} and by Young inequality, we get for $\e$ small enough
 \begin{equation*}
 \begin{aligned}
\frac{\ud D}{\ud t}  &\les -  \int_{S} |\nabla h_0|^2 + h_0^2\ud x+ \E^{\frac 1 2-\frac{d'}{12}} D^{\frac 1 2+\frac{d'}{12}} \left(  \int_{S}|\nabla h_0|^2 + h_0^2\ud x\right)^{\frac{1}{2}}+D^{\frac{3}{2}} \\
& \les-  \int_{S} |\nabla h_0|^2 + h_0^2\ud x + \frac {1}{\ep} \E^{1-\frac{d'}{6}} D^{1+\frac{d'}{6}}+ \e  \int_{S} |\nabla h_0|^2 + h_0^2\ud x + D^{\frac{3}{2}} \\
&\les  -\int_{S} |\nabla h_0|^2 + h_0^2\ud x+D^{\frac{3}{2}} +\E^{1-\frac{d'}{6}} D^{1+\frac{d'}{6}} \\
& \les D^{\frac{3}{2}} +\E^{1-\frac {d'}{6}} D^{1+\frac{d'}{6}},
\end{aligned}
 \end{equation*}
 which concludes  the proof of \eqref{stimeutiliintro4c}.
\end{proof}
The last ingredient is an ODE argument using the relations obtained in Lemma \ref{lemmaintro} and Lemma \ref{lemmaintrobis}. This is described in the following lemma, which is the counterpart of \cite[Lemma 1.5]{OW}. 
\begin{lemma}\label{lem:ode}
 For  $3\le d\le 5$ and $c_\star\ge1$, let  $\E$, $D$, $H$ and $c$ be positive quantities related in a time interval $[0,t_\star]$ by the differential relations
 \begin{equation}\label{dif:ode}
  \frac{\ud \E}{\ud t}  =-D, \quad \frac{\ud H}{\ud t}\les c_\star^{\frac 1 2}( (c^2 D)^{\frac 1 2} +  \E^{\frac{3}{2}-\frac{d'}{12}}D^{\frac{d'}{12}}), \quad \frac{\ud D}{\ud t}\les D^{\frac 3 2}+ \E^{1-\frac{d'}{6}}D^{1+\frac{d'}{6}},
 \end{equation}
 where $d':=\max(d,3)$ and by the algebraic relations
\begin{equation}\label{alg:ode}
 \E\les (HD)^{\frac 1 2}+c_\star^2 D \qquad \textrm{and } \qquad c^2\les (H\E)^{\frac 1 2} +c_\star \E\,;
\end{equation}
Then, letting $\mathcal{G}_0:=H_0+c_\star^2(1+\E_0^2) \E_0$, it holds
\begin{eqnarray}
\label{stimasuE:ode}&\E(t) &\le \E_0 \\ 
\label{1intro:ode} &\E(t) &\les \mathcal{G}_0t^{-1}, \\ 
\label{2introbis:ode}&c^2(t) &\les  \mathcal{G}_0^{\frac{1}{2}}\E^{\frac{1}{2}}_0 \\ 
\label{2intro:ode} &c^2(t) &\les \mathcal{G}_0t^{-\frac{1}{2}} \\ 
\label{stimasuH:ode} &H(t)& \les \mathcal{G}_0 \\ 
\label{stimadissipazione:ode} &D(t) & \les \lt(\mathcal{G}_0+ \mathcal{G}_0^2 + \E_0\,\mathcal{G}_0^{\frac{6}{6-d'}}t^{-\frac{2d'-6}{6-d'}}\rt)t^{-2}.
\end{eqnarray}
\end{lemma}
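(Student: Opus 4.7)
The plan is to mimic the ODE argument of \cite[Lemma 1.5]{OW}, adapting it to the higher-dimensional differential relations in \eqref{dif:ode}. The core is a bootstrap argument on $H$: once we know $H(t) \lesssim \G_0$ on the relevant interval, the algebraic relations \eqref{alg:ode} propagate decay to $\E$ and $c^2$, and these in turn close the bootstrap on $H$.

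\emph{Step 1 (Monotonicity).} The first equation in \eqref{dif:ode} immediately gives $\E(t) \le \E_0$, i.e.\ \eqref{stimasuE:ode}. Moreover, integrating yields $\int_0^t D(s)\,ds \le \E_0$ on $[0,t_\star]$.

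\emph{Step 2 (Bootstrap setup).} Fix a large universal constant $K$ (to be chosen later) and set
\[
T^\star := \sup\{T\in[0,t_\star]\,:\,H(s)\le K\G_0\text{ for all } s\in[0,T]\}\,.
\]
Since $H(0)=H_0\le\G_0<K\G_0$, continuity ensures $T^\star>0$.

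\emph{Step 3 (Decay of $\E$ and $c$ on $[0,T^\star]$).} On $[0,T^\star]$ the first algebraic relation in \eqref{alg:ode} combined with $H\le K\G_0$ reads $\E\lesssim (K\G_0\,D)^{1/2}+c_\star^2 D$, which rewrites (up to $K$-dependent constants) as
\[
D\;\gtrsim\;\min\!\Bigl(\tfrac{\E^2}{\G_0},\,\tfrac{\E}{c_\star^2}\Bigr)\,.
\]
Inserting into $\tfrac{d\E}{dt}=-D$ and integrating gives $\E(t)\lesssim\G_0/t$, proving \eqref{1intro:ode}. Plugging $\E\le\E_0$ (resp.\ $\E\lesssim\G_0/t$) into the second algebraic relation in \eqref{alg:ode} and using $c_\star^2\E_0\le\G_0$ produces
\[
c^2\lesssim(K\G_0\,\E_0)^{1/2}+c_\star\E_0\lesssim\G_0^{1/2}\E_0^{1/2}\quad\text{and}\quad c^2\lesssim \G_0\,t^{-1/2}\,,
\]
which are \eqref{2introbis:ode} and \eqref{2intro:ode}.

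\emph{Step 4 (Closing the bootstrap).} Integrating the $H$-inequality in \eqref{dif:ode}, it suffices to estimate
\[
I_1:=c_\star^{1/2}\int_0^t(c^2D)^{1/2}\,ds\,,\qquad I_2:=c_\star^{1/2}\int_0^t\E^{3/2-d'/12}D^{d'/12}\,ds\,.
\]
For $I_1$, I would split $[0,t]$ into a small-time window $[0,T_0]$ with $T_0\sim\G_0/\E_0$ (where I use the static bound $c^2\lesssim \G_0^{1/2}\E_0^{1/2}$ from Step 3) and a large-time window $[T_0,t]$ (where $c^2\lesssim\G_0 s^{-1/2}$). On each piece, Cauchy--Schwarz together with $\int D\,ds\le\E_0$ and the constraints $c_\star^2\E_0\le\G_0$, $c_\star^2\E_0^3\le\G_0$ produces a contribution bounded by $\G_0$ (modulo universal constants). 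For $I_2$, the same splitting is used: on $[0,T_0]$ one bounds $\E\le\E_0$ and applies H\"older in time, on $[T_0,t]$ one uses $\E\lesssim\G_0/s$ and H\"older with exponents $(12/d',12/(12-d'))$; the boundary exponent $T_0\sim\G_0/\E_0$ is precisely the scale at which these two estimates match, and the bound $c_\star^2(1+\E_0^2)\E_0\le\G_0$ is used to absorb the resulting $c_\star^{1/2}\E_0^{a}$-type prefactors into $\G_0$. Summing, $H(t)\le H_0+C_0\G_0$ with $C_0$ universal; choosing $K>C_0+1$ gives $H(t)<K\G_0$ strictly on $[0,T^\star]$, so $T^\star=t_\star$ and \eqref{stimasuH:ode} follows.

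\emph{Step 5 (Decay of $D$).} By the mean value theorem applied to $\E$ on $[t/2,t]$, there exists $s\in[t/2,t]$ with $D(s)\le \tfrac{2}{t}(\E(t/2)-\E(t))\lesssim\G_0/t^2$. Integrating the third relation in \eqref{dif:ode} from $s$ to $t$, and using $\E\lesssim\G_0/\tau$ on $[s,t]$, yields (after a standard Gronwall-type ODE analysis for $D$) the bound
\[
D(t)\lesssim D(s)+\tfrac{\G_0^2}{t^2}+\E_0\,\G_0^{6/(6-d')}\,t^{-(2d'-6)/(6-d')-2}\,,
\]
which combined with the bound on $D(s)$ gives \eqref{stimadissipazione:ode}. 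The last term reflects the higher-dimensional contribution $\E^{1-d'/6}D^{1+d'/6}$ that is absent in $d=1$.

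The main obstacle is Step 4: the two terms in $\tfrac{dH}{dt}$ are \emph{not} pointwise in time integrable against the rough decay estimates alone, and the time-splitting at $T_0\sim\G_0/\E_0$ together with the $c_\star^2\E_0^3\le\G_0$ balance in the definition of $\G_0$ is exactly what makes the integrals close at $\G_0$ (rather than some higher power of $\G_0$). This is also where the proof departs slightly from the one-dimensional argument in \cite{OW}, since the exponents in the second term depend on $d'$.
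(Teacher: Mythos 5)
Your proposal adopts a genuinely different structure from the paper's proof: you set up a bootstrap hypothesis $H\le K\G_0$ and try to close it by integrating $\frac{dH}{dt}$ directly in time, whereas the paper (following \cite{OW}) makes the change of variable $t\leftrightarrow\E$, writing $-\frac{dH}{d\E}=D^{-1}\frac{dH}{dt}$ and then using the first algebraic relation to bound $D^{-1}\lesssim H\E^{-2}+c_\star^2\E^{-1}$, which turns the $H$-estimate into a Bernoulli-type ODE in the monotone variable $\E$ that is integrated once and for all (no bootstrap needed). The substitution is not a cosmetic choice: it is exactly what makes the $H$-estimate close.

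The concrete gap is in your Step 4, in the treatment of $I_1=c_\star^{1/2}\int_0^t(c^2D)^{1/2}\,ds$. On the large-time piece, using $c^2\lesssim\G_0\,s^{-1/2}$ and Cauchy--Schwarz against $\int D\,ds\le\E_0$ gives
\[
c_\star^{1/2}\int_{T_0}^t(c^2D)^{1/2}\,ds\lesssim c_\star^{1/2}\G_0^{1/2}\Bigl(\int_{T_0}^t s^{-1/2}\,ds\Bigr)^{1/2}\Bigl(\int_{T_0}^t D\,ds\Bigr)^{1/2}\lesssim c_\star^{1/2}\G_0^{1/2}\E_0^{1/2}\,t^{1/4},
\]
which grows without bound in $t$ and therefore cannot be absorbed into $\G_0$ no matter how $K$ or $T_0$ are chosen; a single split at $T_0\sim\G_0/\E_0$ and one application of Cauchy--Schwarz simply do not see the refined decay of $\int_s^t D$. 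To rescue your route one must exploit $\int_s^\infty D\,d\tau\le\E(s)\lesssim\G_0/s$ locally, e.g.\ by a dyadic decomposition of $[T_0,t]$ in which the $k$-th block contributes $\lesssim\G_0(2^kT_0)^{-1/4}$ and the sum telescopes to $\G_0^{3/4}\E_0^{1/4}$ --- but that is precisely the discrete analogue of the paper's change of variable, and neither is what you describe. (Your $I_2$ estimate, by contrast, does converge under the single split because the exponents in $s$ there are strictly subcritical.) Your Step 5 is a reasonable sketch but is looser than the paper's argument, which integrates $-\frac{d}{dt}\bigl(\max(\gamma D^{d'/6},D^{1/2})\bigr)^{-1}\lesssim 1$ and combines it with $\int_t^T D\lesssim\G_0 t^{-1}$ to self-improve the pointwise bound on $D(T)$, rather than relying on the mean value theorem plus a Gronwall step whose constants you would still need to track.
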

We   refer the reader to \cite[Theorem 1.2]{OW} for the derivation of Proposition \ref{prop:decay} from Lemma \ref{lemmaintro}, Lemma \ref{lemmaintrobis} and Lemma \ref{lem:ode} (and in particular for better understanding the role of $c_\star$).

\begin{remark}
\rm{
 Notice that if $\E_0\les 1$, all the estimates besides \eqref{stimadissipazione:ode} coincide with those obtained in \cite[Lemma 1.5]{OW}. Still assuming $\E_0\les 1$, 
 also \eqref{stimadissipazione:ode} reduces to its counterpart in \cite{OW} provided  $t$ is larger than a suitable constant (depending only on $d'$, $H_0$, $\E_0$ and $c_\star$). 
}
\end{remark}
\begin{proof}[Proof of Lemma \ref{lem:ode}]
 Estimate \eqref{stimasuE:ode} is a direct consequence of $\frac{\ud \E}{\ud t}  =-D\le0$. Moreover, \eqref{1intro:ode}, \eqref{2introbis:ode} and \eqref{2intro:ode} follow exactly as in \cite[Lemma 1.5]{OW} replacing $c_\star^2$ by $c_\star^2(1+\E_0^2)$ in the proofs.
 
 \noindent
 Let us show \eqref{stimasuH:ode}.
 Since $\E$ is decreasing in time, we can make the change of variable $t\leftrightarrow \E$ which in light of the first equation in \eqref{dif:ode} gives $-\frac{\ud}{\ud\E}=D^{-1} \frac{\ud}{\ud t}$ so that plugging the second inequality of \eqref{alg:ode} into the second one in \eqref{dif:ode} we get
 \[
  -\frac{\ud H}{\ud\E}\les c_\star^{\frac 1 2}\lt( (H\E)^{\frac 1 4} D^{-\frac 1 2} +(c_\star \E)^{\frac 1 2} D^{-\frac 1 2} + \E^{\frac{3}{2}-\frac{d'}{12}} D^{\frac{d'}{12}-1}\rt).
 \]
Using that the first estimate in \eqref{alg:ode} implies that $D^{-1}\les H \E^{-2} +c_\star^2 \E^{-1}$, this gives
\begin{align}\nonumber
 -\frac{\ud H}{\ud\E}&\les c_\star^{\frac 1 2}\lt(H^{\frac 3 4} \E^{-\frac 3 4}+c_\star H^{\frac 1 4} \E^{-\frac 1 4}+c_\star^{\frac 1 2} H^{\frac 1 2} \E^{-\frac 1 2}+c_\star^{\frac 3 2}\rt.\\ \nonumber
 &\qquad \qquad \lt.+c_\star^{2-\frac{d'}{6}}\E^{\frac 1 2}+\E^{\frac{d'}{12}-\frac{1}{2}}H^{1-\frac{d'}{12}} \rt)\\ \label{derH}
 &\les c_\star^{\frac 1 2}\lt(H^{\frac 3 4} \E^{-\frac 3 4}+c_\star^{\frac 3 2}+ \E_0^{\frac 1 2}\lt( c_\star^{2-\frac{d'}{6}}+\lt(H^{\frac 1 4} \E^{-\frac 1 4}\rt)^{4-\frac{d'}{3}}\rt)\rt),
\end{align}
where we used Young inequality together with \eqref{stimasuE:ode}. Notice that $0< 4-\frac{d'}{3}\le 3$ and $0<2-\frac{d'}{6}\le \frac{3}{2}$ since $3\le d'\le 5$\,. Therefore  we can use again Young inequality  together with $c_\star\ge 1$ 
to get
\[
c_\star^{2-\frac{d'}{6}}+\lt(H^{\frac 1 4} \E^{-\frac 1 4}\rt)^{4-\frac{d'}{3}}\les c_\star^{\frac 3 2}+ H^{\frac 3 4} \E^{-\frac 3 4} +1\les c_\star^{\frac 3 2}+H^{\frac 3 4} \E^{-\frac 3 4},
\]
which plugged into \eqref{derH} implies 
\[
-\frac{\ud H}{\ud \E}\le C_0 c_\star^{\frac 1 2}\lt( 1+\E_0^{\frac 1 2}\rt) \lt(H^{\frac 3 4} \E^{-\frac 3 4}+c_\star^{\frac 3 2}\rt)\,
\]
for some constant $C_0>0$\,.
From this we deduce as in \cite{OW} that
\[
 -\frac{\ud}{\ud\E} \lt(H+C_0 \, c_\star^2\lt(1+\E_0^{\frac 1 2}\rt)\E\rt)^{\frac 1 4}\le C_0 \lt(1+\E_0^{\frac 1 2}\rt) \frac{\ud}{\ud\E} (c_\star^2 \E)^{\frac 1 4},
\]
which after integration gives 
\[
 H\les H_0+ (1+\E_0^{\frac 1 2})c_\star^2 \E+ c_\star^2(1+\E_0^{\frac 1 2})^4  \E_0\les H_0+ c_\star^2(1+\E_0^2) \E_0=\mathcal{G}_0
\]
and \eqref{stimasuH:ode} is proven.\\
We finally prove \eqref{stimadissipazione:ode}. To ease notation set $\gamma:= \E_0^{1-\frac{d'}{6}}$. Using \eqref{stimasuE:ode} and the fact that $\E\le \E_0$, the last estimate in \eqref{dif:ode} can be rewritten as
\begin{equation}\label{derD}
 -\frac{\ud}{\ud t} \lt( \frac{1}{\max\lt(\gamma D^{\frac {d'}{6}},D^{\frac 1 2}\rt)}\rt)\les 1.\,
 \end{equation}
\noindent 
Fix $T\in (0,t_\star)$.
Integrating \eqref{derD} between $s$ and $T$ (with $0<s<T$) we find
\begin{equation}\label{estimDmax}
 \max\lt(\gamma D^{\frac {d'}{6}}(s),D^{\frac 1 2}(s)\rt)\ges \frac{ \max\lt(\gamma D^{\frac{d'}{6}}(T),D^{\frac 1 2}(T)\rt)}{1+\max\lt( \gamma D^{\frac {d'}{6}}(T),D^{\frac 1 2}(T)\rt) (T-s) }.
\end{equation}

Let $t\in (0,T)$.
By the first equality in \eqref{dif:ode} and by  \eqref{1intro:ode}, we have
\begin{equation}\label{estimDintegral}
 \int_t^T D(s) \ud s\les \mathcal{G}_0t^{-1}.
\end{equation}
Assume first that $\gamma\ges D(T)^{-\frac{d'-3}{6}}$  and let $A:=\{s\in [t,T]\,:\,\gamma\ges D(s)^{-\frac{d'-3}{6}}\}$. Combining \eqref{estimDintegral} with \eqref{estimDmax}, we obtain
\begin{multline*}
 \mathcal{G}_0t^{-1}\ges D(T)\int_t^T \chi_A \frac{1}{\lt(1+ \gamma D^{\frac{d'}{6}}(T)(T-s)\rt)^{\frac{6}{d'}}} \ud s \\
 +\gamma^2 D^{\frac{d'}{3}}(T)\int_t^T \chi_{A^c} \frac{1}{\lt(1+\gamma  D^{\frac{d'}{6}}(T)(T-s)\rt)^{2}} \ud s
\end{multline*}
Using once again that $\gamma\ges D(T)^{-\frac{d'-3}{6}}$ so that $\gamma^2 D^{\frac{d'}{3}}(T)\ges D(T)$, and that 
 \[
 \min\lt( \frac{1}{(1+x)^{\frac{6}{d'}}},\frac{1}{(1+x)^2}\rt)\ges \chi_{[0,1]}(x)\qquad\textrm{for }x\ge 0\,,
 \]
we deduce
\begin{multline*}
 \mathcal{G}_0t^{-1}\ges D(T)\int_t^T\chi_{[0,1]}(\gamma D^{\frac{d'}{6}}(T)(T-s) )\ud s\\
 \ges \gamma^{-1} D^{1-\frac{d'}{6}}(T)\int_0^{\gamma D^{\frac{d'}{6}}(T) (T-t)} \chi_{[0,1]}(x)\ud x\ges \min(\gamma^{-1} D^{1-\frac{d'}{6}}(T), D(T) (T-t) )\,,
 \end{multline*}
 where the second inequality follows from the change of variable $x=\gamma D^{\frac{d'}{6}}(T)(T-s)$\,.
Taking $t=\frac T 2$ we get
\[
 D(T)\les \frac{\mathcal{G}_0+ \gamma^{\frac{6}{6-d'}}\mathcal{G}_0^{\frac{6}{6-d'}}T^{-\frac{2d'-6}{6-d'}}}{T^2}\,,
\]
which using that $\gamma=\E_0^{1-\frac{d'}{6}}$ gives
\begin{equation}\label{primocaso}
  D(T)\les \frac{\mathcal{G}_0+ \E_0\,\mathcal{G}_0^{\frac{6}{6-d'}}T^{-\frac{2d'-6}{6-d'}}}{T^2}\,.
\end{equation}
Arguing analogously in the case $\gamma\les D(T)^{-\frac{d'-3}{6}}$, for any $0<t<T$ one can show
$$
\mathcal{G}_0t^{-1}\ges \min(D(T)(T-t),D^{\frac{1}{2}}(T))
$$
from which we deduce
\begin{equation}\label{secondocaso}
D(T)\les \frac{\mathcal{G}_0+\mathcal{G}_0^2}{T^{2}}\,.
\end{equation}
In conclusion  \eqref{stimadissipazione:ode} follows by summing \eqref{primocaso} and \eqref{secondocaso}.
\end{proof}

  \section{Global existence}\label{section:globalex}
In this section we prove global existence and uniqueness for solutions of  the Cahn-Hilliard equation \eqref{CH}
under the assumptions of Theorem \ref{teointro0}.
We start by stating a local-in-time existence and uniqueness result, whose proof is postponed to the end of this section. 
\begin{theorem}\label{Tex0}
Let $\bdelta>0$ be given.  For any $T>0$, there exists $\delta=\delta(T)>0$ such that for every   $v_\ciniz\in\K$ and ${\iniz}\in  L^\infty(S)\cap (v_\ciniz+H^1(S))$ with $\|\iniz-v_\ciniz\|_{L^\infty}\le\delta$, 
the problem
\begin{equation}\label{CHC0}
\left\{\begin{array}{l}
u_t-\Delta (G'(u)-\Delta u)=0 
\\
 u(0)=\iniz 
\end{array}\right.
\end{equation}
admits a unique $C^\infty$ 
solution $u$  on $[0,T]$.  Moreover, 
\begin{equation}\label{tesitex1}
\| u(t)-v_{c(t)}\|_{L^\infty} \leq \dfin \quad \forall \ t \in [0,T]\,,
\end{equation}
where $v_{c(t)}$ is given by \eqref{minimale} and
  $u(t)\in v_{c(t)} +H^j(S)$ for every $j\in\N\cup\{0\}$ and  $t \in (0, T]$\,.
\end{theorem}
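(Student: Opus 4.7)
The plan is to reduce \eqref{CHC0} to a semilinear evolution equation for the perturbation $f := u - v_{c_0}$, which satisfies
\begin{equation*}
f_t + \Delta^2 f = \Delta\bigl[G'(v_{c_0}+f) - G'(v_{c_0})\bigr],\qquad f(0)=u_0-v_{c_0},
\end{equation*}
with initial data small in $L^\infty(S)\cap H^1(S)$. Duhamel's formula then recasts the problem as the fixed-point equation
\begin{equation*}
f(t)=e^{-t\Delta^2}(u_0-v_{c_0})+\int_0^t e^{-(t-s)\Delta^2}\Delta\bigl[G'(v_{c_0}+f(s))-G'(v_{c_0})\bigr]\ud s,
\end{equation*}
which I would solve in a weighted parabolic H\"older space $X_T$ on $[0,T]\times S$ in the spirit of \cite{LWZ}, with a norm controlling $\|f\|_{L^\infty}$ together with appropriate H\"older seminorms of $f$ and its spatial derivatives, using weights adapted to the unboundedness of the strip in the $z$-direction.

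The first step is to derive the relevant semigroup estimates for $e^{-t\Delta^2}$ on $S=Q\times\R$, obtained by tensoring the Fourier series in $x'$ with the biharmonic heat kernel in $z$; in particular $\|e^{-t\Delta^2}\Delta\phi\|_{L^\infty}\les t^{-1/2}\|\phi\|_{L^\infty}$ together with companion bounds in H\"older norms. Using that $G$ is a polynomial and that $v_{c_0}$ is smooth with all derivatives bounded, one obtains the Lipschitz estimate
\begin{equation*}
\|G'(v_{c_0}+f)-G'(v_{c_0}+g)\|_{L^\infty}\les\bigl(1+\|f\|_{L^\infty}+\|g\|_{L^\infty}\bigr)\|f-g\|_{L^\infty},
\end{equation*}
so on a ball of radius $r=O(\delta)$ in $X_T$ the Duhamel map is a contraction provided $\delta=\delta(T)$ is small enough to absorb the factor $T^{1/2}$ coming from the time integration. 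This yields a unique mild solution $f\in X_T$ with $\|f(t)\|_{L^\infty}\le 2\delta$ on $[0,T]$.

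The main obstacle I foresee is handling the unboundedness of $S$ in the semigroup estimates: one must work in norms weighted in the $z$-variable in order to preserve the $L^\infty$-smallness of the data and to make the nonlinear mapping stable; the fact that we linearize around the \emph{fixed} kink $v_{c_0}$ rather than around the $t$-dependent projection $v_{c(t)}$ is essential to keep the linear operator a constant-coefficient self-adjoint operator for which the kernel estimates are explicit. Once the mild solution is in hand, parabolic Schauder estimates applied iteratively to $\Delta^k f$, combined with the smoothing of $e^{-t\Delta^2}$ in $L^2$-based Sobolev norms and the initial $H^1$ regularity of $u_0-v_{c_0}$, upgrade $f$ to $C^\infty((0,T]\times S)$ with $f(t)\in H^j(S)$ for every $j\in\N$ and every $t>0$. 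Finally, to derive \eqref{tesitex1} I combine $\|u(t)-v_{c_0}\|_{L^\infty}=\|f(t)\|_{L^\infty}\le 2\delta$ with Lemma \ref{lemma:fc}, which gives $\|u(t)-v_{c(t)}\|_{L^\infty}\les\|u(t)-v_{c_0}\|_{L^\infty}\le\bdelta$ as soon as $\delta$ is chosen small enough depending on $\bdelta$; uniqueness is already built into the contraction argument, completing the plan.
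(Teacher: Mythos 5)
Your overall approach coincides with the paper's: reduce to the perturbation $f=u-v_{c_0}$ around a \emph{fixed} kink, write the Duhamel formula for the biharmonic semigroup, run a Banach fixed point in a time-weighted parabolic space, upgrade regularity by iterating the mild formulation, and finally pass from $\|f\|_{L^\infty}$ to $\|u-v_{c(t)}\|_{L^\infty}$ via Lemma \ref{lemma:fc}. However, one step as stated would fail.

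You claim that the Duhamel map is a contraction on a ball of radius $r=O(\delta)$ in $X_T$ ``provided $\delta=\delta(T)$ is small enough to absorb the factor $T^{1/2}$''. This cannot work: the nonlinearity $G'(v+f)-G'(v)$ contains the linear term $G''(v)f$, so its Lipschitz constant on a small ball is bounded below by a constant of order one (it does not tend to zero with $\delta$). After Duhamel the contraction constant is therefore $\gtrsim T^{1/2}$, independently of $\delta$, and for a given $T\ge 1$ no choice of $\delta$ repairs it. What actually works (and is what the paper does) is a two-step argument: first prove local existence/uniqueness on $[0,T_0]$ with $T_0$ small (and $\|f^0\|_{L^\infty}+T_0$ small), obtaining the a priori bound $\max_{[0,T_0]}\|f(t)\|_{L^\infty}\les\|f^0\|_{L^\infty}$; then iterate on intervals of length $T_0$ to reach $[0,T]$, picking up at most an exponential factor $\max_{[0,T]}\|f(t)\|_{L^\infty}\les\|f^0\|_{L^\infty}e^{cT}$. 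It is only at that stage that one chooses $\delta=\delta(T)$ small enough so that $\delta e^{cT}$ (after Lemma \ref{lemma:fc}) stays below $\bdelta$, which is precisely where the $T$-dependence of $\delta$ comes from. Your plan conflates the two sources of smallness.

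A secondary, less serious, issue: you say one must use norms weighted in the $z$-variable to handle the unboundedness of $S$. This is unnecessary. The fundamental solution $k$ of $\partial_t+\Delta^2$ on $S=Q\times\R$ satisfies $\|\nabla^j k(t)\|_{L^1(S)}\les t^{-j/4}$ (proved in the paper by combining Poisson summation on the torus with the biharmonic kernel on $\R$), and these $L^1$ bounds are all that is needed for the $L^\infty$ fixed-point estimates; the only weights in the function space are temporal ones, $t^{j/4}\nabla^j$ and $t\,\partial_t$, which compensate the singularity of $\nabla^j k(t)$ as $t\to0$. Spatial weights play no role.
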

\begin{remark}
\rm{
We point out that our local-in-time existence result holds in any space dimension. 
}
\end{remark}


We now prove that thanks to the relaxation estimates established in Proposition \ref{prop:decay}, we can pass from a local to a global existence result for solutions of \eqref{CHC0}.

\begin{theorem}\label{globalex}
Let $2\le d\le 5$ and let $\bdelta$ be given by Proposition \ref{nonlinear}. For every $\overline{H}>0$ and $\overline{\E}>0$, there exists $\delta>0$ such that for every  $u_0$ 
with $ \E_0:=\E(u_0)\le \overline{\E}$, $H_0:= H(u_0)\le \overline{H}$ and $\|u_0-v_0\|_{L^\infty}\le\delta$,
equation \eqref{CHC0} admits a unique global smooth solution $u$ with
\begin{equation}\label{tesitexglobal}
\| u(t)-v_{c(t)}\|_{L^\infty} \leq \bdelta, \quad \forall \ t\ge 0\,.
\end{equation}
Moreover,  for every $t>0$ and for every $j\geq 0$, there holds: $u(t)\in (v_{c(t)} +  H^j(S))$\,,
\begin{equation}\label{finitezza}
\E(u(t)) < \infty, \qquad H(u(t)) <\infty.
\end{equation}
\end{theorem}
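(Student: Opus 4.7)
The plan is a continuation argument that pairs the local existence result Theorem \ref{Tex0} with the a priori decay of Proposition \ref{prop:decay} and the interpolation bound of Corollary \ref{cor:gaglia}. Fix $\overline H,\overline{\E}>0$, set $\mathcal{G}_0:=\overline H+\overline{\E}+\overline{\E}^7$, and let $\bdelta$ be the threshold of Proposition \ref{nonlinear}. Denote by $\delta_{\mathrm{loc}}(\tau)$ the threshold given by Theorem \ref{Tex0} for final time $\tau$ and target $L^\infty$-bound $\bdelta$.

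First I would show that any smooth solution verifying the hypotheses of Proposition \ref{prop:decay} on an interval $[0,T]$ satisfies
\begin{equation*}
\|u(t)-v_{c(t)}\|_{L^\infty}\les C(\overline H,\overline{\E})\,t^{-\alpha(d)}\qquad \forall t\in(0,T],
\end{equation*}
for some exponent $\alpha(d)>0$; this is obtained by substituting the bounds $\E(u(t))\les \mathcal{G}_0 t^{-1}$ and \eqref{stimadissipazione} of Proposition \ref{prop:decay} into \eqref{gagliahd} and tracking the powers in $\E^{1/2-d'/12}D^{d'/12}$, with $d'=\max(3,d)$. Once this is established I would pick $T_1=T_1(\overline H,\overline{\E})$ so large that the right-hand side is at most $\min(\bdelta/2,\delta_{\mathrm{loc}}(1))$ for all $t\ge T_1$, and set $\delta:=\delta_{\mathrm{loc}}(T_1+1)$ as the threshold in the statement. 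Theorem \ref{Tex0} then yields a unique smooth solution on $[0,T_1+1]$ satisfying $\|u(t)-v_{c(t)}\|_{L^\infty}\le\bdelta$ and $u(t)\in v_{c(t)}+H^j(S)$ for every $j$ and $t\in(0,T_1+1]$, so Proposition \ref{prop:decay} applies on this interval.

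Next, I would run the bootstrap by defining
\begin{equation*}
T^*:=\sup\bigl\{T\ge T_1+1\colon \eqref{CHC0}\ \text{has a smooth solution on}\ [0,T]\ \text{with}\ \|u(t)-v_{c(t)}\|_{L^\infty}\le\bdelta\ \text{and}\ u(t)\in v_{c(t)}+H^j(S)\ \forall j,\ \forall t\in(0,T]\bigr\}.
\end{equation*}
Assume for contradiction that $T^*<\infty$. On $[0,T^*)$ all the hypotheses of Proposition \ref{prop:decay} are met, so the first step together with continuity of $t\mapsto u(t)-v_{c(t)}$ in $L^\infty$ yields $\|u(T^*)-v_{c(T^*)}\|_{L^\infty}\le\delta_{\mathrm{loc}}(1)$ and $u(T^*)\in v_{c(T^*)}+H^j(S)$ for every $j$. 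Applying Theorem \ref{Tex0} at initial time $T^*$ with final time $1$ produces a smooth extension on $[T^*,T^*+1]$ that preserves the $L^\infty$ bound $\bdelta$, contradicting the maximality of $T^*$. Hence $T^*=+\infty$, so $u$ is global; the bounds \eqref{tesitexglobal} and \eqref{finitezza} follow from Proposition \ref{prop:decay}, and uniqueness is inherited from Theorem \ref{Tex0} applied on every finite sub-interval.

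The hard part will be the first step: one must check, case by case for $d'\in\{3,4,5\}$, that the combination of Corollary \ref{cor:gaglia} with \eqref{stimadissipazione} produces a strictly negative exponent of $t$ at infinity. The corrective factor $t^{-(2d'-6)/(6-d')}$ is dominated by $t^{-2}$ for $t$ large, so that $D\les t^{-2}$ eventually and $\E^{1/2-d'/12}D^{d'/12}$ decays like $t^{-3/4}$, $t^{-5/6}$ and $t^{-11/12}$ respectively. Once this quantitative step is carried out with a constant depending only on $\overline H$ and $\overline{\E}$, the continuation argument closes and delivers the theorem.
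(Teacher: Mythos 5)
Your overall strategy is the same as the paper's: combine the local existence result (Theorem \ref{Tex0}) with the a priori decay estimates of Proposition \ref{prop:decay} via Corollary \ref{cor:gaglia} to iterate local existence indefinitely. Your arithmetic on the decay exponents is correct, and the choice $\delta:=\delta_{\mathrm{loc}}(T_1+1)$ with $T_1$ calibrated to $\min(\bdelta/2,\delta_{\mathrm{loc}}(1))$ matches the paper's choice of $T$ in the condition \eqref{condizione}. The paper iterates explicitly in unit steps rather than running a maximal-time contradiction, but that is a cosmetic difference.

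There is, however, a genuine gap: to invoke Proposition \ref{prop:decay} on $[0,T_1+1]$ (and then on any interval reaching $T^*$) you must verify \emph{all} its hypotheses, and one of them is $H(u(t))<\infty$ for every $t$ in the interval (the differential relation \eqref{stimeutiliintro4b} and Lemma \ref{lemmaintro} both presuppose finiteness of $H$). Theorem \ref{Tex0} delivers only the $L^\infty$ closeness and the $H^j$ regularity of $f_{c}(t)$; it does not, by itself, guarantee that the $\dot H^{-1}$ distance to $v_0$ is finite. You cannot derive $H(u(t))<\infty$ as an \emph{output} of Proposition \ref{prop:decay}, since it is an \emph{input}. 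The paper's proof dedicates its opening paragraph precisely to this issue: starting from $u(t)\in v_{c(t)}+H^4(S)$ it shows $\E(u(t))<\infty$ by a Taylor expansion around $v_{c(t)}$, then establishes the energy identity $\E(u_0)-\E(u(t))=\int_0^t D(u(s))\ud s$, and uses Jensen to bound $\|u(t)-u_0\|_{\dot H^{-1}}^2\le t\,(\E(u_0)-\E(u(t)))$, whence $H(u(t))\les\|u(t)-u_0\|_{\dot H^{-1}}^2+H_0<\infty$. Without this step your application of Proposition \ref{prop:decay} is not justified and the continuation argument does not close.

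A secondary, smaller point: in the contradiction step you evaluate $u(T^*)$ and assert $u(T^*)\in v_{c(T^*)}+H^j(S)$ for all $j$, but $T^*$ is defined as a supremum and need not lie in the set. This is repaired by applying Theorem \ref{Tex0} from a time $\tau<T^*$ with $\tau>T^*-1$ (using the decay bound to guarantee $\|u(\tau)-v_{c(\tau)}\|_{L^\infty}\le\delta_{\mathrm{loc}}(1)$) and appealing to uniqueness, which is essentially how the paper argues; but as written your proposal glosses over this.
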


\begin{proof}
Let us start by proving that if a solution $u$ exists with $u(t)\in (v_{c(t)}+H^4(S))$, then \eqref{finitezza} holds true.
We first point out that since $d\le 5$, by Gagliardo-Nirenberg inequality (see the proof of \eqref{gagliahd}), $u(t)\in L^\infty(S)$, and hence $u(t)-v_{c(t)}\in L^\infty(S)$. 
By arguing as in \eqref{primopasso} and \eqref{stima4}, we get
\begin{align*}
 \E(u(t))&\les \|u(t)-v_{c(t)}\|^2_{H^1}<\infty\,.
\end{align*}
{As a consequence, using that $u(t)\in (v_{c(t)}+H^4(S))$ solves \eqref{CHC0}, the energy gap $\E(u(t))$ is differentiable with respect to $t$ and
\begin{equation}\label{differgap}
\E(\iniz) - \E (u(t)) = \int_{0}^t \| \nabla \left(G'(u(s))-\Delta u(s) \right) \|^2_{L^2} \ud s\,.
\end{equation}
}

We now turn to $H(u)$. Since
\begin{equation*}
H(u)=\| u(t)-v_0 \|^2_{\dot H^{-1}} \les \| u(t)-\iniz \|^2_{\dot H^{-1}} +  H(\iniz),
\end{equation*}
we only need to prove that  $\| u(t)-\iniz \|^2_{\dot H^{-1}} $ is finite.

By \eqref{differgap} and \eqref{CHC0}, using also the definition of the $\dot H^{-1}$ norm
and Jensen inequality, we get
\begin{equation*}
\begin{aligned}
\| u(t)-\iniz \|^2_{\dot H^{-1}} &= \left\| \int_0^t u_t(s) \ud s \right\|^2_{\dot H^{-1}} = \left\| \int_0^t\Delta(G'(u(s))-\Delta u(s))\ud s \right\|^2_{\dot H^{-1}}\\
&\le\int_{S}\left|\int_0^t|\nabla(G'(u(s))-\Delta u(s))|\ud s\right|^2\ud x\\
&\le t\int_{0}^t\ud s\,\int_{S}|\nabla (G'(u(s))-\Delta u(s))|^2\ud x\\
&=t(\E(\iniz)-\E(u(t))),
\end{aligned}
\end{equation*}
which immediately implies that $H(u)<\infty$.\\

We can now turn to the global existence result. 
Let $\delta_1:=\min\{\delta(1),\bar\delta\}$, where $\delta(1)$ is given by Theorem \ref{Tex0} for $T=1$.
Let moreover $T\gg 1$ be such that
\begin{equation}\label{condizione}
 \E_0^{\frac{1}{2}-\frac{d'}{12}}\lt(\G_0+\G_0^2\rt)^{\frac{d'}{12}} T^{-\frac{d'}{6}}\ll \delta_1,
\end{equation}
with $\G_0= H_0+\E_0+\E_0^7$, and let $\delta:=\delta(T)$ be given by Theorem \ref{Tex0}.

By Lemma \ref{lemH1}, using that $\E_0<\infty$ and $H_0<\infty$, we get that $u_0\in (v_0+H^1(S))$. Therefore, assuming that $\|u_0-v_0\|_{L^\infty}\le\delta$,
by Theorem \ref{Tex0}, there exists a unique smooth solution $u=u(t)$ to \eqref{CHC0}in $[0,T]$, with $u(t)\in (v_{c(t)}+H^j(S))$ for any $j\ge 0$ and for any $t\in (0,T]$, and 
\begin{equation}\label{tesitexglobalapp}
\|u(t)-v_{c(t)}\|_{L^\infty}\le \bar\delta \qquad\forall t\in[0,T]\,.
\end{equation}
It follows that  $u$ satisfies the assumptions of Proposition \ref{prop:decay} in $[0,T]$ with  $T\gg1$,  and hence \eqref{stimadissipazione} gives
\begin{equation}\label{secondastima}
D(u(T))\les (\G_0+\G_0^2)T^{-2}\,.
\end{equation}
Using Corollary \ref{cor:gaglia},  \eqref{stimasuE}, \eqref{secondastima} and \eqref{condizione} we obtain
\begin{equation}\label{delta1}
\begin{aligned}
\|u(T)-v_{c(T)}\|_{L^\infty}&\les \E^{\frac 1 2-\frac{d'}{12}}(u(T))\,D^{\frac {d'}{12}}(u(T))\\
&\les \E_0^{\frac 1 2-\frac{d'}{12}}\lt (\G_0+\G_0^2 \rt)^{\frac{d'}{12}}T^{-\frac{d'}{6}}\\
&\ll \delta_1\,.
\end{aligned}
\end{equation}
We can thus apply once again Theorem \ref{Tex0} with $u_0$ replaced by $u(T)$ and $v=v_{c(T)}$ in order to extend the solution $u$ to the interval $[0,T+1]$. Using \eqref{stimadissipazione} and arguing as above we get
\begin{equation*}
\|u(T+1)-v_{c(T+1)}\|_{L^\infty}\les \E_0^{\frac 1 2-\frac{d'}{12}}\lt (\G_0+\G_0^2 \rt)^{\frac{d'}{12}}(T+1)^{-\frac{d'}{6}}\le \E_0^{\frac 1 2-\frac{d'}{12}}\lt (\G_0+\G_0^2 \rt)^{\frac{d'}{12}}T^{-\frac{d'}{6}}\stackrel{\eqref{delta1}}{ \ll} \delta_1\,,
\end{equation*}
and we can now iterate this procedure in order to obtain a global solution.


\end{proof}


We may now turn to the proof of Theorem \ref{Tex0}. For this we will  use Duhamel formula for constructing the solution and then apply Banach fixed point Theorem. This strategy is inspired by \cite{LWZ}. 
 We first need some estimates on the fundamental solution of the parabolic bi-harmonic equation 
 \begin{equation}\label{biLap}
 \partial_t u+\Delta^2 u=0 \qquad \textrm{in } S\,.
 \end{equation}
We first fix some notation.
For any $\si>0$ we denote by $Q_\si$ the $d-1$ dimensional torus with sidelength $l$ and we set $S_\si:=Q_\si\times \R$, with the convention that $Q:=Q_1$ and $S:=S_1$. The set $\mathcal{S}(\R^d)$ denotes the Schwartz class.

\noindent
Using Fourier transform in the last variable and Fourier series in the variables in $Q$, we can write 
\begin{equation*}
u(t,x',\xd)=\int_{\R} \sum_{\xi'\in \Z^{d-1}}  c_{\xid}(t,\xi')  e^{2 i \pi \xi\cdot x} \ud \xid,
\end{equation*}
so that  \eqref{biLap} implies 
\begin{equation*}
\dot c_{\xid}= -(2\pi)^4|\xi|^4 c_{\xid},
\end{equation*} 
and thus 
\begin{equation*}
c_{\xid}(t,\xi')= c_{\xid}(0,\xi') e^{-|2\pi\xi|^4 t}.
\end{equation*}

The fundamental solution of \eqref{biLap} is thus 
\begin{equation}\label{kernel}
k(t,x):= \int_{\R} \sum_{\xi'\in \Z^{d-1}}  e^{-|2\pi\xi|^4 t} e^{2 i \pi \xi\cdot x}  \ud \xid,
\end{equation}

In order to prove $L^1$ estimates on the kernel $k$ and its derivatives, we will need the following lemma.
\begin{lemma}\label{fourlemma}
Let $u\in \mathcal{S}(\R^d)$. Then 
\[
 \int_{\R^{d}}\left|\int_{\R^d} u(\eta) e^{2 i \pi\eta\cdot y}\ud\eta\right|\ud y<\infty
\]
and for any $t>0$
\begin{equation}\label{finito}
\int_{S_{t^{-\frac 1 4}}} \lt| \int_\R t^{\frac {d-1} 4}\sum_{\eta'\in (t^{\frac 1 4}\Z)^{d-1}} u(\eta) e^{2 i \pi \eta\cdot y} \ud \etad\rt| \ud y\leq \int_{\R^{d}}\left|\int_{\R^{d}} u(\eta) e^{2 i \pi\eta\cdot y }\ud\eta\right|\ud y.
\end{equation}

\end{lemma}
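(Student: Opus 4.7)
The first assertion is essentially the statement that the inverse Fourier transform of a Schwartz function is Schwartz. The plan is to observe that $\check u(y):=\int_{\R^d} u(\eta) e^{2i\pi \eta\cdot y}\ud\eta$ belongs to $\mathcal{S}(\R^d)\subset L^1(\R^d)$, which gives finiteness of the right-hand side of \eqref{finito}.

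For the main inequality, the plan is to apply Poisson summation to turn the mixed sum/integral into a series of translates of $\check u$. Setting $\sigma:=t^{1/4}$, so that $S_{t^{-1/4}}=Q_{\sigma^{-1}}\times\R$ and $(t^{1/4}\Z)^{d-1}=(\sigma\Z)^{d-1}$, I fix $(\eta_d,y_d)$ and apply Poisson summation in the $\eta'$ variable to the function $\eta'\mapsto u(\eta',\eta_d)\,e^{2i\pi\eta'\cdot y'}$. This gives the identity
\begin{equation*}
\sigma^{d-1}\sum_{\eta'\in(\sigma\Z)^{d-1}} u(\eta',\eta_d)\,e^{2i\pi\eta'\cdot y'}=\sum_{m\in\Z^{d-1}}\int_{\R^{d-1}} u(\eta',\eta_d)\,e^{2i\pi\eta'\cdot(y'-m/\sigma)}\ud\eta'.
\end{equation*}
Multiplying by $e^{2i\pi\eta_d y_d}$ and integrating in $\eta_d$ then yields
\begin{equation*}
\int_\R t^{(d-1)/4}\sum_{\eta'\in(t^{1/4}\Z)^{d-1}} u(\eta)\,e^{2i\pi\eta\cdot y}\ud\eta_d=\sum_{m\in\Z^{d-1}}\check u\!\left(y'-m\,t^{-1/4},\,y_d\right).
\end{equation*}

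To conclude, I apply the triangle inequality and integrate over $y\in Q_{t^{-1/4}}\times\R$: since the translates $\{y'-m\,t^{-1/4}:m\in\Z^{d-1}\}$ of the fundamental domain $Q_{t^{-1/4}}$ tile $\R^{d-1}$ exactly once each, Fubini gives
\begin{equation*}
\int_{S_{t^{-1/4}}}\sum_{m\in\Z^{d-1}}|\check u(y'-m\,t^{-1/4},y_d)|\ud y=\int_{\R^d}|\check u(y)|\ud y,
\end{equation*}
which is precisely the right-hand side of \eqref{finito}. The only subtlety is justifying the use of Poisson summation and the interchange of sum and integral, but both follow from the rapid decay of $u\in\mathcal{S}(\R^d)$ and of $\check u$, so I don't anticipate a serious obstacle.
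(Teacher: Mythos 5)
Your proposal is correct and follows essentially the same route as the paper's proof: Poisson summation in the $\eta'$ variable (the paper applies it to $g_{\eta_d}(\cdot):=u(\cdot,\eta_d)$ and writes the result as $\sum_{\zeta'\in(t^{-1/4}\Z)^{d-1}}\check g_{\eta_d}(\zeta'+y')$, which is your identity up to the harmless sign change $m\mapsto -m$), followed by the triangle inequality and the observation that the translates of $Q_{t^{-1/4}}$ tile $\R^{d-1}$. The only cosmetic difference is that you integrate in $\eta_d$ immediately and express the series directly in terms of the $d$-dimensional $\check u$, whereas the paper carries the inner $\eta_d$-integral along and only identifies the full inverse transform in the final line.
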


\begin{proof}
Since $u\in \mathcal{S}(\R^d)$, also its Fourier transform is in the Schwartz class and thus
\[
 \int_{\R^{d}}\left|\int_{\R^{d}} u(\eta) e^{2 i \pi\eta\cdot y}\ud\eta\right|\ud y<\infty\,.
\]
For any $\etad\in\R$, we set $g_\etad(\cdot):=u(\cdot,\etad)$ which is then also in the Schwartz class.
Therefore, for any $y'\in\R^{d-1}$,
by Poisson summation formula, we have
\begin{equation}\label{poisson}
t^{\frac {d-1} 4}\sum_{\eta'\in (t^{\frac 1  4}\Z)^{d-1}}g_\etad(\eta')e^{ 2 i \pi\eta'\cdot y'}=\sum_{\zeta'\in  (t^{- \frac 1 4}\Z)^{d-1}} \check g_\etad(\zeta'+y'),
\end{equation}
where $\check g_\etad(\cdot)$ denotes the inverse Fourier transform of $ g_\etad(\cdot)$ defined by
\begin{equation}\label{antitr}
 \check g_\etad(\zeta')=\int_{\R^{d-1}}g_\etad(\eta')e^{2 i \pi\zeta'\cdot \eta'}\ud \eta'.
\end{equation}
By \eqref{poisson}, Fubini and the change of variables $\tilde y'=y'+\zeta'$, we have
\begin{align*}
\lefteqn{\int_{S_{t^{-\frac 1 4}}} \lt| \int_\R t^{\frac {d-1} 4}\sum_{\eta'\in (t^{\frac 1 4}\Z)^{d-1}} u(\eta',\etad) e^{2 i \pi \eta\cdot y} \ud \etad\rt| \ud y} \\ 
&=\int_{S_{t^{-\frac 1 4}}} \lt| \int_\R\sum_{\zeta'\in (t^{-\frac 1 4}\Z)^{d-1}} \check g_\etad(\zeta'+y')\,e^{2 i \pi\eta_d\cdot \yd}\ud \etad\rt| \ud y\\ 
&\le\int_\R\lt(\sum_{\zeta'\in (t^{-\frac 1 4}\Z)^{d-1}}\int_{Q_{t^{- \frac 1 4}}}\left|\int_{\R}\check g_\etad(\zeta'+y')\,e^{2 i \pi\etad\cdot\yd}\ud \etad\right|\ud y'\rt)\ud\yd\\ 
&=\int_\R\lt(\sum_{\zeta'\in (t^{- \frac 1 4}\Z)^{d-1}}\int_{-\zeta'+Q_{t^{-\frac 1 4}}}\left|\int_{\R}\check g_\etad(\tilde y')\,e^{2 i \pi\etad\cdot\yd}\ud \etad\right|\ud \tilde y'\rt)\ud\yd\\ 
&=\int_\R\int_{\R^{d-1}}\left|\int_\R \check g_\etad(\tilde y')e^{2 i \pi\etad\cdot y_d}\ud\etad\right|\ud \tilde y'\ud\yd\\ 
&=\int_{\R^{d}}\left|\int_{\R^{d}} u(\eta)e^{2i \pi  \eta\cdot y}\ud\eta\right|\ud  y\,,
\end{align*}
where the last equality follows directly from the  definition of $g_\etad$ and  \eqref{antitr}.
\end{proof}

We may now prove $L^1$ bounds for the kernel $k$ and its derivatives.

\begin{proposition}\label{lemmanucleo}
For every $j\in \N$, there exists $\gamma_j>0$ such that 
for any $t>0$, 
\begin{equation}\label{2nucleo} 
\|\nabla^j k(t) \|_{L^1(S)} \leq \gamma_j \, t^{-\frac{j} 4}.
\end{equation}
\end{proposition}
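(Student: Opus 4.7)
The plan is to exploit the parabolic scaling $x \sim t^{1/4}$ natural to the bi-Laplacian heat equation together with Lemma~\ref{fourlemma} so as to reduce the estimate on $\|\nabla^j k(t)\|_{L^1(S)}$ to a time-independent $L^1$ bound on a fixed Schwartz function on $\mathbb{R}^d$.

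Starting from \eqref{kernel}, differentiating $j$ times under the sum/integral brings down factors of $(2i\pi\xi)^{\otimes j}$. I would then make the change of variable $\eta = t^{1/4}\xi$ in the Fourier variables, so that $\eta_d \in \R$ with $d\xi_d = t^{-1/4}d\eta_d$, while the summation index $\eta' = t^{1/4}\xi'$ now runs over $(t^{1/4}\Z)^{d-1}$. Simultaneously I would set $y = t^{-1/4}x$ on the physical side. The key identity $|2\pi\xi|^4 t = |2\pi\eta|^4$ then produces
\[
\nabla^j k(t,x) = t^{-\frac{d+j}{4}} \int_{\R} t^{\frac{d-1}{4}}\!\!\sum_{\eta' \in (t^{1/4}\Z)^{d-1}} u_j(\eta)\, e^{2i\pi\eta\cdot y}\, d\eta_d,
\]
where $u_j(\eta) := (2i\pi\eta)^{\otimes j}\, e^{-|2\pi\eta|^4} \in \mathcal{S}(\R^d)$. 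This is exactly the structure that appears inside the absolute value in the left-hand side of \eqref{finito}.

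Next I would change variables in the physical integral. Since $x \in Q \times \R = S$, under $y = t^{-1/4}x$ the transverse variable $y'$ ranges over the rescaled torus $Q_{t^{-1/4}}$, so $y \in S_{t^{-1/4}}$ and $dx = t^{d/4}\, dy$. Therefore
\[
\|\nabla^j k(t)\|_{L^1(S)} = t^{-\frac{j}{4}} \int_{S_{t^{-1/4}}}\!\! \left| \int_{\R} t^{\frac{d-1}{4}}\!\!\sum_{\eta' \in (t^{1/4}\Z)^{d-1}} u_j(\eta)\, e^{2i\pi\eta\cdot y}\, d\eta_d \right| dy.
\]
At this point I would apply Lemma~\ref{fourlemma} (with $u = u_j$) to bound the $y$-integral by
\[
\int_{\R^d}\left| \int_{\R^d} u_j(\eta)\, e^{2i\pi\eta\cdot y}\, d\eta \right| dy,
\]
which is precisely the $L^1(\R^d)$ norm of the inverse Fourier transform of $u_j$. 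Since $u_j \in \mathcal{S}(\R^d)$, so is its inverse Fourier transform, hence this $L^1$ norm is a finite constant depending only on $j$ and $d$, which we call $\gamma_j$. This yields the announced estimate $\|\nabla^j k(t)\|_{L^1(S)} \le \gamma_j\, t^{-j/4}$.

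The only delicate point is bookkeeping the Jacobians: one must separate the factor $t^{(d-1)/4}$ needed to cast the sum in the exact form required by \eqref{finito} from the volume factor $t^{d/4}$ arising from rescaling the physical domain, and combine them with the $t^{-j/4}$ from the derivatives and the $t^{-1/4}$ from the $\xi_d$-integration so that exactly the desired power $t^{-j/4}$ remains. Everything else is a routine consequence of Fourier inversion and the Schwartz-class regularity of $u_j$.
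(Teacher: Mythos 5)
Your proposal is correct and follows essentially the same route as the paper: rescale $\xi=t^{-1/4}\eta$, $x=t^{1/4}y$ so the physical $L^1$ integral over $S$ becomes $t^{-j/4}$ times an integral over $S_{t^{-1/4}}$ of a fixed Schwartz profile, then invoke Lemma~\ref{fourlemma} to bound the latter by the $t$-independent $L^1(\R^d)$ norm of the inverse Fourier transform of $u_j$. The Jacobian bookkeeping you flag as the delicate point is exactly how the paper's computation is organized, and it checks out.
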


\begin{proof}
Since for $l= 1,\dots, d$ and $\alpha_l\in \N$, 
\begin{equation*}
\partial_{x_{l}}^{\alpha_l} k (t,x) = \int_{\R} \sum_{\xi' \in \Z^{d-1}} e^{-|2\pi\xi|^4 t} e^{2 i \pi \xi \cdot x} (2 i \pi \, \xi_{l})^{\alpha_l} \ud \xid\,
\end{equation*}
by the change of variables $\xi =t^{-\frac 1 4}  \eta $ and $x=t^{\frac 1 4} y$, we have for every $(\alpha_1,\dots,\alpha_d)$, with $\sum_l \alpha_l=j$
\begin{equation*}
\begin{aligned}
\int_{S} \left| \partial_{x_{1}}^{\alpha_1} \dots \partial_{x_{d}}^{\alpha_d} k(t,x)\right| \ud x &={(2\pi)^j} \int_{S}\left| \int_{\R} \sum_{\xi' \in \Z^{d-1}}\, e^{-|2\pi\xi|^4 t}\,  \xi_{1}^{\alpha_1} \, \dots \,  \xi_{d}^{\alpha_d} \,e^{2 i \pi  \xi\cdot x}  \ud \xid\right|\ud x \\
&={(2\pi)^j}\int_{S_{t^{-\frac 1 4}}} \left| \int_{\R} t^{\frac{d-1} {4}}\sum_{\eta' \in \left(t^{\frac {1} {4}}\Z\right)^{d-1}}e^{-|2\pi\eta|^4 }  \, t^{-\frac{\alpha_1}{4}} \eta_{1}^{\alpha_1} \, \dots \, t^{-\frac{\alpha_d}{4}}\eta_{d}^{\alpha_d}\,e^{2 i \pi \eta \cdot y} \ud \etad\right|\ud y \\
&={(2\pi)^j}t^{-\frac{j}{4}} \int_{S_{t^{-\frac 1 4}}} \left|\int_{\R} t^{\frac{d-1}{4}} \sum_{\eta' \in \left(t^{\frac {1} {4}}\Z\right)^{d-1}}  e^{-|\eta|^4 }  \eta_{1}^{\alpha_1} \, \dots \, \eta_{d}^{\alpha_d} e^{2 i \pi \eta\cdot y} \ud \etad\right|\ud y.
\end{aligned}
\end{equation*}
Estimate  \eqref{2nucleo} follows form  Lemma \ref{fourlemma} applied to 
$$
u(\eta)=e^{-|\eta|^4 }   \eta_{1}^{\alpha_1} \, \dots \,  \eta_{d}^{\alpha_d}\,.
$$
\end{proof}

We finally prove Theorem \ref{Tex0}.

\begin{proof}[Proof of Theorem \ref{Tex0}]
Let $\bdelta$ and $T>0$ be given. For $u_0\in L^\infty(S)$, set $\finiz:=\iniz-v_\ciniz$ and define  the operator 

\begin{multline*}
\T f (t,x):=\int_{S} k(t,x-y)\finiz(y) \ud y\\
 +\int_0^t \ud s \int_{S}\Delta k(t-s,x-y)( G'(f(s,y)+v_\ciniz(y))-G'(v_\ciniz(y))) \ud y\,.
\end{multline*}
We divide the proof into three steps.

{\it Step 1: Existence of a classical solution $u$ to \eqref{CHC0} satisfying \eqref{tesitex1}. }

\noindent
Let $T_0>0$ and set 
$$
\Str:=\{(t,x)\,:\,t\in[0, T_0],\,x\in S\}\,.
$$
Consider 
\begin{equation}\label{domainC}
 \cC:=\Big\{ f\in C^{1,4}(\Pi) \ : \ \sum_{j=0}^4 \| t^\frac{j}{4} \nabla^j f\|_{L^\infty(\Pi)}+ \|t\, \partial_t f\|_{L^\infty(\Pi)}\le 1\Big\}
\end{equation}
equipped with the natural weighted norm
\begin{equation}\label{normC}
 \|f\|_{\cC}:=\sum_{j=0}^4 \| t^\frac{j}{4} \nabla^j f\|_{L^\infty(\Pi)}+ \|t\, \partial_t f\|_{L^\infty(\Pi)}\,.
\end{equation}
We prove that $\T$ is a contraction in $\cC$ for $\|f^0\|_{L^\infty(S)}+T_0$ small enough. 
We start by proving that $\T$ leaves  $\cC$ invariant. If $f$ is in $\cC$, it is standard to check that $\T f \in C^{1,4}(\Pi)$ and that 
 \begin{equation}\label{eqsolvedbyT}
  \partial_t \T f +\Delta^2 \T f =\Delta (G'(f+v)-G'(v)).
 \end{equation}
We refer for instance to \cite{Evans} for a similar computation in the case of the heat equation. For $f\in\cC$,  
 using that $\|v_\ciniz\|_{L^\infty(S)}, \|f\|_{L^\infty(\Str)}\le 1$ we have
\begin{equation}\label{stimaG}
|G'(f+v_\ciniz)-G'(v_\ciniz)|\les  |f|,
\end{equation}
which, combined with \eqref{2nucleo}, implies that for any $t\in[0,T_0]$
\begin{equation}\label{pallainpalla}
\begin{aligned}
\|\T f(t)\|_{L^\infty(S)}
&\les\|k(t)\|_{L^{1}(S)} \finizinf+\int_{0}^{t}\|\Delta k(t-s)\|_{L^1(S)}\|f(s)\|_{L^\infty(S)}\\
&\les \finizinf+\int_{0}^t(t-s)^{-\frac 1 2}\|f(s)\|_{L^\infty(S)}\ud s\\
&\les\finizinf+t^{\frac 1 2}\|f\|_{L^\infty(\Str)}\\
&\les \finizinf+T_0^{\frac 1 2}\|f\|_{\cC}\,.
\end{aligned}
\end{equation}
and, analogously,
\begin{equation}\label{pinp1}
\begin{aligned}
 \|t^{\frac 1 4} \nabla \T f(t)\|_{L^\infty(S)}&\les  t^{\frac 1 4}\|\nabla k(t)\|_{L^1(S)}\|f^0\|_{L^\infty(S)}\\
 &\quad+ t^{\frac 1 4}\int_0^t \|\nabla \Delta k(t-s)\|_{L^1(S)} \| G'(f(s)+v)-G'(v)\|_{L^\infty(S)} \ud s\\
 &\les \|f^0\|_{L^\infty(S)}+ t^{\frac 1 4}\int_0^t {(t-s)^{-\frac 3 4}} \| f(s)\|_{L^\infty(S)} \ud s\\
 &\les \|f^0\|_{L^\infty(S)}+ T_0^{\frac 1 2} \|f\|_{\cC}\,.
\end{aligned}
\end{equation}
Moreover, again by \eqref{2nucleo}, we get
\begin{align*}
 \|t^{\frac 1 2} \nabla^2 \T f(t)\|_{L^\infty(S)}&\les  t^{\frac 1 2}\|\nabla^2 k(t)\|_{L^1(S)}\|f^0\|_{L^\infty(S)}\\
 &\quad+ t^{\frac 1 2}\int_0^t \|\nabla \Delta k(t-s)\|_{L^1(S)} \|\nabla (G'(f(s)+v)-G'(v))\|_{L^\infty(S)} \ud s\\
 &\les \|f^0\|_{L^\infty(S)}\\
 &\quad+ t^{\frac 1 2}\int_0^t {(t-s)^{-\frac 3 4}} \| (G''(f(s)+v)-G''(v))\nabla(f(s)+v) + G''(v) \nabla f(s)\|_{L^\infty(S)} \ud s.
\end{align*}
Since,  for $f\in \cC$,
\begin{multline}\label{contoder}
  \| (G''(f(s)+v)-G''(v))\nabla(f(s)+v) + G''(v) \nabla f(s)\|_{L^\infty(S)}\\\les \|f(s)\|_{L^\infty(S)} [s^{-\frac 1 4} (s^{\frac 1 4} \|\nabla f(s)\|_{L^\infty(S)})+1] + s^{-\frac 1 4} (s^{\frac 1 4} \|\nabla f(s)\|_{L^\infty(S)})
  \les s^{-\frac 1 4} \|f\|_{\cC}
\end{multline}
we have
\begin{multline}\label{pinp2}
  \|t^{\frac 1 2} \nabla^2 \T f(t)\|_{L^\infty(S)}\les\|f^0\|_{L^\infty(S)}+ t^{\frac 1 2} \|f\|_{\cC} \int_0^t {(t-s)^{-\frac 3 4} s^{-\frac 1 4}}{\ud s}\\
  \les \|f^0\|_{L^\infty(S)}+ t^{\frac 1 2} \|f\|_{\cC} \int_0^1{(1-s)^{-\frac 3 4} s^{-\frac 1 4}} {\ud s}
  \les \|f^0\|_{L^\infty(S)}+ t^{\frac 1 2} \|f\|_{\cC}\,,
\end{multline}
which yields as above 
\[\|t^{\frac 1 4} \nabla \T f\|_{L^\infty(\Pi)}\les \|f^0\|_{L^\infty(S)}+ T_0^{\frac 1 2} \|f\|_{\cC}\,.\]
By using again \eqref{2nucleo}, for $j=3,4$, we get
\begin{equation}\label{contoderj}
\begin{aligned}
 \|t^{\frac j 4} \nabla^j \T f(t)\|_{L^\infty(S)}&\les  t^{\frac j 4}\|\nabla^j k(t)\|_{L^1(S)}\|f^0\|_{L^\infty(S)}\\ 
 &\quad + t^{\frac j 4}\int_0^t \|\nabla \Delta k(t-s)\|_{L^1(S)} \|\nabla^{j-1} (G'(f(s)+v)-G'(v))\|_{L^\infty(S)} \ud s\\ 
 &\les \|f^0\|_{L^\infty(S)}\\
 &\quad+ t^{\frac j 4}\int_0^t {(t-s)^{-\frac 3 4}} \|\nabla^{j-1} (G'(f(s)+v)-G'(v))\|_{L^\infty(S)} \ud s\,.
\end{aligned}
\end{equation}
By expanding the derivatives of $G'(f(s)+v)$ and arguing as in \eqref{contoder}, it is straightforward to check 
\begin{equation}\label{servedopo}
\|\nabla^{j-1} (G'(f(s)+v)-G'(v))\|_{L^\infty(S)} \les  s^{-\frac {j-1}{ 4}} \|f\|_{\cC}\, 
\end{equation}
which together with \eqref{contoderj} yields
\begin{equation}\label{pinpj}
\|t^{\frac j 4} \nabla^j \T f(t)\|_{L^\infty(S)}\les \|f^0\|_{L^\infty(S)}+ T_0^{\frac 1 2} \|f\|_{\cC}.
\end{equation}
for $j=3,4$ and $t\in [0,T_0]$\,.
By summing \eqref{pallainpalla}, \eqref{pinp1}, \eqref{pinp2} and \eqref{pinpj} we get
\[
 \sum_{j=0}^4  \|t^{\frac j 4} \nabla^j \T f(t)\|_{L^\infty(S)}\les \|f^0\|_{L^\infty(S)}+ T_0^{\frac 1 2} \|f\|_{\cC}.
\]
Finally, by \eqref{eqsolvedbyT} and by \eqref{servedopo}, for $t\in [0,T_0]$ we obtain
\[
 \|t \partial_t \T f(t)\|_{L^\infty(S)} \les \| t\nabla^4 \T f(t)\|_{L^\infty(S)}+ \|t \Delta (G'(f(t)+v)-G'(v))\|_{L^\infty(S)}\les \|f^0\|_{L^\infty(S)}+ T_0^{\frac 1 2} \|f\|_{\cC}\,.  
\]
We conclude that for $\|f^0\|_{L^\infty(S)}+ T_0$ small enough,
\[\| \T f\|_{\cC}\le C \lt( \|f^0\|_{L^\infty(S)}+ T_0^{\frac 1 2} \|f\|_{\cC}\rt) \le 1.\]
The contractivity of $\T$ for $T_0$ small enough is obtained similarly. Therefore, there exists a unique fixed point $f$ in $\cC$.  Set $u(t,x):=f(t,x)-v(x)$, we immediately have that $u$ is a classical solution of \eqref{CHC0} in the interval $[0,T_0]$.

Moreover, by \eqref{pallainpalla} we have
\[
{ \max_{t\in [0,T_0]}} \|f(t)\|_{L^\infty(S)}\les  \|f^0\|_{L^\infty(S)}+ T_0^{\frac 1 2} {\max_{t\in [0,T_0]}} \|f(t)\|_{L^\infty(S)},
\]
so that, for $T_0$ small enough, 
\begin{equation}\label{sirifainl2}
{ \max_{t\in [0,T_0]}} \|f(t)\|_{L^\infty(S)}\les \frac{\|f^0\|_{L^\infty(S)}}{1-C T_0^{\frac 1 2}}\les \|f^0\|_{L^\infty(S)}\,.  
\end{equation}

By dividing the interval $[0,T]$ in intervals of length $T_0$ and iterating the procedure above, we obtain that the solution $f$ can be extended to the interval $[0,T]$ and that
\begin{equation}\label{expon}
{ \max_{t\in [0,T]}} \|f(t)\|_{L^\infty(S)}\les \|f^0\|_{L^\infty(S)} e^{c T}\,.
\end{equation}
Recalling that $u(t):=f(t)-v$, by  \eqref{expon} and Lemma \ref{lemma:fc}, we get that there exists $\delta:=\delta(T)>0$ such that if $\finizinf\le \delta$, then 
$$
\|u(t)-v_{c(t)}\|_{L^\infty(S)}\le\bdelta\qquad\textrm{for any }t\in[0,T]\,.
$$

{\it Step 2: $u(t)-v_{c(t)}\in H^j(S)$ for every $t\in (0,T]$ and for every $j\in \N\cup\{0\}$\,.}

\noindent
We preliminarily notice that, by the very definition of $v_{c(t)}$ in \eqref{minimale} and by the fact that $v_{c(t)}\in \dot{H}^j(S)$,  
in order to get the claim it is enough to show that
for every $t_0\in (0,T)$ and for every $j\in\N\cup\{0\}$ there holds
\begin{equation}\label{spacereg}
\sup_{t\in(t_0,T]} \|\nabla^j f(t)\|_{L^2(S)}\le C_j
\end{equation}
where $C_j$ is a positive constant depending on $f_0$, $t_0$, $T$, and $j$\,.

\noindent
To this  purpose, we first show,  by induction on $j$, that for every $t_0\in(0,T)$ and for every $j\in\N\cup\{0\}$,  
\begin{equation}\label{inftyder}
\sup_{t\in(t_0,T]} \|\nabla^j f(t)\|_{L^\infty(S)}\le C_j'
\end{equation}
with  $C'_j$ positive and depending on $f_0$, $t_0$, $T$, and $j$\,.
By \eqref{expon}, \eqref{inftyder} holds true for $j=0$.
Let us assume now that it holds up to $j-1$. Since for $t\in (t_0,T)$,
\begin{equation}\label{semigroup}
 f(t,x)=\int_{S} k(t,x-y) f(t_0,y) \ud y +\int_{t_0}^t\int_S \Delta k(t-s,x-y)(G'(f(s,y)+v(y))-G'(v(y))) \ud y \ud s, 
\end{equation}
using Young inequality, the inductive assumption, \eqref{2nucleo}  and \eqref{expon}, we get
\begin{align*}
 \|\nabla^j f(t)\|_{L^\infty(S)}& \les \|\nabla^j k(t)\|_{L^1(S)}\| f(t_0)\|_{L^\infty(S)}\\ 
 &\quad +\int_{t_0}^{t}\|\nabla\Delta k(t-s)\|_{L^1(S)}\|\nabla^{j-1}(G'(f(s)+v_\ciniz)-G'(v_\ciniz))\|_{L^\infty(S)} \ud s\\
&\les \gamma_j\, t^{-\frac j 4}\|f(t_0)\|_{L^\infty(S)}+ C'_{j-1}\,\int_{t_0}^t (t-s)^{-\frac 3 4} \ud s\\
&\les  \gamma_j\, t_0^{-\frac j 4}\|f^0\|_{L^\infty(S)}e^{cT}+C'_{j-1}T^{\frac 1 4},
\end{align*}
which proves \eqref{inftyder}. 

\noindent
With \eqref{inftyder} in hand, we can prove that \eqref{spacereg} holds true.
We proceed once again by induction on $j$. As for $j=0$, 
by using in order of appearance \eqref{semigroup}, 
\eqref{stimaG}, Minkowski and Young inequalities and
\eqref{2nucleo}, for any $0<t<T_0$, we obtain 
\begin{eqnarray*}
\|f(t)\|_{L^2(S)}&\les&\|k(t)\|_{L^1(S)}\|\finiz\|_{L^2(S)}+\int_{0}^t\|\Delta k(t-s)\|_{L^1(S)}\|G'(f(s)+v_\ciniz)-G'(v_\ciniz)\|_{L^2(S)} \ud s\\
&\les& \|\finiz\|_{L^2(S)}+\int_{0}^{t}(t-s)^{-\frac 1 2}\|f(s)\|_{L^2(S)} \ud s\,\\
&\les& \|\finiz\|_{L^2(S)}+ T_0^{\frac 1 2}\max_{t\in [0,T_0]}\|f(t)\|_{L^2(S)},
\end{eqnarray*}
which, by arguing as in \eqref{sirifainl2} and \eqref{expon} yields
\begin{equation}\label{ldueb}
\max_{t\in [0,T]}\|f(t)\|_{L^2(S)}\les \|\finiz\|_{L^2(S)} \, e^{cT},
\end{equation}
thus proving \eqref{spacereg} for $j=0$.
 
\noindent
Let us prove a similar bound for $\nabla f(t)$. By arguing as above and using \eqref{ldueb} we have
\begin{align*}
\|\nabla f(t)\|_{L^2(S)}&\les \|k(t)\|_{L^1(S)}\|\nabla\finiz\|_{L^2(S)}+\int_{0}^{t}\|\nabla\Delta k(t-s)\|_{L^1(S)}\|G'(f(s)+v_\ciniz)-G'(v_\ciniz)\|_{L^2(S)} \ud s\\ 
&\les\|\nabla\finiz\|_{L^2(S)}+ \int_{0}^{t}(t-s)^{-\frac 3 4} \|f(s)\|_{L^2(S)}\ud s\\ 
&
{\les}  \|\nabla\finiz\|_{L^2(S)}+\|\finiz\|_{L^2(S)} \int_{0}^{t}(t-s)^{-\frac 3 4}e^{c{s}} \ud s\\ 
&\les \|\nabla\finiz\|_{L^2(S)}+\|\finiz\|_{L^2(S)}T^{\frac{1}{4}} \, e^{c{T}},
\end{align*}
This proves \eqref{spacereg} for $j=1$. 

\noindent
Let us assume that it holds up to $j-1$. By 
 \eqref{semigroup}, \eqref{stimaG}, \eqref{2nucleo}, the inductive assumption and \eqref{ldueb}, we have
\begin{align*}
\|\nabla^jf(t)\|_{L^2(S)}&\le \|\nabla ^j k(t)\|_{L^1(S)}\|f(t_0)\|_{L^2(S)}\\
&\qquad +\int_{t_0}^{t}\|\nabla\Delta k(t-s)\|_{L^1(S)}\|\nabla^{j-1}(G'(f(s)+v_\ciniz)-G'(v_\ciniz))\|_{L^2(S)} \ud s\\
&\les \gamma_j t^{-\frac j 4} \|f(t_0)\|_{L^2(S)}+C_{j-1}\int_{0}^{t}(t-s)^{-\frac 3 4}\ud s\\
&\les \gamma_j t_0^{-\frac j 4} \|f^0\|_{L^2(S)}e^{cT}+C_{j-1}\lt( t_0^{-\frac j 4} +T^{\frac 1 4}\rt),
\end{align*}
thus concluding the proof of \eqref{spacereg}. 

{\it Step 3: $C^\infty$ regularity of the solution $u$\,.}

\noindent
The regularity of $u$ follows by a bootstrap argument. We briefly sketch it.
Let $f:=u-v$ where $u$ satisfies \eqref{CHC0}, namely $f$ satisfies
\begin{equation}\label{auxiliary}
\left\{\begin{array}{l}
f_t+\Delta^2 f=\Delta (G'(f+v)-G'(v)) 
\\
 f(0)=f^0 
\end{array}\right.
\end{equation}
and set $g:=G''(f+v)$. 
Let $t_0>0$ and $h^0\in L^2(S)\cap L^\infty(S)$. Consider the problem
\begin{equation}\label{bootpb}
\left\{\begin{array}{l}
h_t+\Delta^2 h=\Delta(g h)\qquad\textrm{in }S\\
h(t_0)=h^0\quad
\end{array}\right.
\end{equation}
and note that {\it formally} $h=f_t$ for $t\ge t_0$\,. We first show that \eqref{bootpb} admits a unique classical solution $h$. This would imply in particular that  $h\in C^{1,4}$ so that $f\in C^{2,4}((0,T)\times S)$.\\
Let $T_0>0$ and  
\begin{eqnarray*}
\Pi'&:=&\{(t,x)\,:\,t\in [t_0,T_0],\,x\in S\}\\
\cC'&:=&\Big\{ h\in C^{1,4}(\Pi) \ : \ \sum_{j=0}^4 \| t^\frac{j}{4} \nabla^j h\|_{L^\infty(\Pi')}+ \|t\, \partial_t h\|_{L^\infty(\Pi')}\le M\|h^0\|_{L^\infty(S)}\Big\}\,,
\end{eqnarray*}
with $M>0$ to be determined.
Let moreover $\|\cdot\|_{\cC'}$ be defined as in \eqref{normC} by replacing $\Pi$ with $\Pi'$.
By arguing as in Step 1, one can show there exists $M>0$ and a unique classical solution $h$ to the problem \eqref{bootpb} in the interval $[t_0,T_0]$. Moreover, by arguing as in \eqref{sirifainl2} and \eqref{expon}, one can see that the solution $h$ can be extended to the interval $[t_0,T]$.
The iteration of the argument above applied to all the derivatives in space-time of $f$ yields the desired result.
\end{proof}

\end{document}